\newtheorem{theorem}{Theorem}[section]
\newtheorem{lemma}[theorem]{Lemma}
\newtheorem{corollary}[theorem]{Corollary}
\newtheorem{proposition}[theorem]{Proposition}
\theoremstyle{definition}
\newtheorem{assumption}[theorem]{Assumption}
\newtheorem{remark}[theorem]{Remark}
\numberwithin{equation}{section}
 \theoremstyle{plain}
 \numberwithin{equation}{section} 
 \numberwithin{figure}{section} 
 \theoremstyle{plain}
 \theoremstyle{plain}
 \theoremstyle{remark}
 \newtheorem*{acknowledgement*}{Acknowledgement}
\newcommand{\cA}{{\mathcal A}}
\newcommand{\cB}{{\mathcal B}}
\newcommand{\cD}{{\mathcal D}}
\newcommand{\cF}{{\mathcal F}}
\newcommand{\cG}{{\mathcal G}}
\newcommand{\cH}{{\mathcal H}}
\newcommand{\cL}{{\mathcal L}}
\newcommand{\cM}{{\mathcal M}}
\newcommand{\cX}{{\mathcal X}}
\newcommand{\te}{{\theta}}
\newcommand{\Om}{{\Omega}}
\newcommand{\om}{{\omega}}
\newcommand{\ve}{{\varepsilon}}
\newcommand{\del}{{\delta}}
\newcommand{\Del}{{\Delta}}
\newcommand{\gam}{{\gamma}}
\newcommand{\Gam}{{\Gamma}}
\newcommand{\sig}{{\sigma}}
\newcommand{\al}{{\alpha}}
\newcommand{\be}{{\beta}}
\newcommand{\ka}{{\kappa}}
\newcommand{\la}{{\lambda}}
\newcommand{\vp}{{\varpi}}
\newcommand{\bbN}{{\mathbb N}}
\newcommand{\bbR}{{\mathbb R}}
\newcommand{\bbZ}{{\mathbb Z}}
\newcommand{\bbI}{{\mathbb I}}
\newcommand{\bbQ}{{\mathbb Q}}
\newcommand{\brF}{{\bar F}}
\begin{document}
\title[]{Nonconventional polynomial clt}%
 \vskip 0.1cm
 \author{Yeor Hafouta and Yuri Kifer\\
\vskip 0.1cm
 Institute  of Mathematics\\
Hebrew University\\
Jerusalem, Israel}%
\address{
Institute of Mathematics, The Hebrew University, Jerusalem 91904, Israel}
\email{yeor.hafouta@mail.huji.ac.il, kifer@math.huji.ac.il}%

\thanks{ }
\subjclass[2010]{Primary: 60F17 Secondary: 60F05, 60G42, 60G15}%
\keywords{limit theorems, martingale approximation, mixing.}
\dedicatory{  }
 \date{\today}
\begin{abstract}\noindent
We obtain a functional central limit theorem (CLT) for sums of the form
$\xi_N(t)=\frac1{\sqrt N}\sum_{n=1}^{[Nt]}\big(F(X(q_1(n)),...,X(q_\ell(n)))-\brF\big)$,
where $X(n),\, n\ge0$ is a sufficiently fast mixing
vector process with some moment conditions and stationarity properties,
$F$ is a continuous function with polynomial growth and certain regularity
properties, $\bar F$ is a certain centralizing constant and $q_{i},i\geq1$
are arbitrary polynomials taking on positive integer values on positive integers, i.e. polynomials
satisfying $q_i(\bbN)\subset\bbN$, where $\bbN$ is the set of
natural numbers.
For polynomial $q_j$'s this CLT generalizes \cite{KV} which allows only
linear $q_j$'s to have the same polynomial degree.
We also  prove that  $D^2=\lim_{N\to\infty}E\xi^2_N(1)$ exists and
 provide necessary and sufficient conditions for its positivity, 
which is equivalent to the statement that  the weak limit of $\xi_N$ is not zero almost surely. 
Finally, we  study independence properties of the increments of the limiting
process. Our proofs require studying  asymptotic densities of 
special subsets of $\bbN$, which  is done in a separate section. 
As in \cite{KV}, our results
 hold true when $X_i(n)=T^nf_i$, where $T$ is a mixing subshift
of finite type, a hyperbolic diffeomorphism or an expanding transformation
taken with a Gibbs invariant measure, as well as in the case when
$X_i(n)=f_i(\Upsilon_n)$, where $\Upsilon_n$ is a Markov
chain satisfying the Doeblin condition considered as a stationary
process with respect to its invariant measure.
\end{abstract}
\maketitle
\markboth{Y. Hafouta and Y. Kifer }{Nonconventional polynomial clt}
\renewcommand{\theequation}{\arabic{section}.\arabic{equation}}
\pagenumbering{arabic}

\section{Introduction}\label{sec1}\setcounter{equation}{0}

Ergodic theorems for nonconventional averages
\[
\frac 1N\sum_{n=1}^NT^{q_1(n)}f_1\cdots T^{q_\ell(n)}f_\ell
\]
has become a well established field of research. Here $T$ is a measure preserving
transformation, $f_i$'s are bounded measurable functions and $q_i$'s are
polynomials taking on positive integer values on the positive integers, i.e. satisfying $q_i(\bbN)\subset\bbN$
for any $i=1,2,...,\ell$, where $\bbN$ is the set of natural numbers.
 The term "nonconventional" comes from \cite{Fu} and general polynomial $q_i$'s in this setup were first
considered in \cite{Be}. Taking $f_i$'s to be indicators of measurable sets
we obtain asymptotic results on numbers of multiple recurrences which was
the original motivation for this study. The probabilistic counterpart of
ergodic theorems is the law of large numbers, and from this point of view
it is natural to try to obtain other probabilistic limit theorems
for corresponding nonconventional expressions. 
This line of research started by [8] and continued in a series of paper.


In particular, a functional central limit theorem (CLT) was  obtained
in \cite{KV} for expressions of the form
\begin{equation}\label{1.1}
\xi_N(t)=\frac 1{\sqrt N}\sum_{n=1}^{[Nt]}\big(F(X(q_1(n)),...
,X(q_\ell(n)))-\brF\big)
\end{equation}
where $\{X(n), n\geq0\}$ is a sufficiently
fast mixing vector valued process with some stationarity properties
and moment conditions, $F$ is a continuous function
with polynomial growth and certain regularity properties, $\brF=\int
Fd(\mu\times\cdots\times\mu)$,
$\mu$ is the common distribution of  $X(n)$ and $q_i(n)=in$ for $1\leq i
\leq k\leq\ell$, while when $\ell\geq i>k$ they are positive functions taking
on  integer values on integers and satisfying certain growth conditions.
In the case when $q_i$'s are all polynomials those growth conditions
require that $\deg q_{i+1}>\deg q_i$ whenever $\ell>i\geq k$.
For instance, the proof from \cite{KV} does not work  for sums
of the form
\begin{eqnarray}\label{EXMP}
&\frac1{\sqrt N}\sum_{n=1}^{[Nt]}\big(F(X(n),X(n^2),X(n^2+n))-\brF\big)
\end{eqnarray}
and similar ones.

In this paper we restrict ourselves to the case of polynomial $q_i$'s but
eliminate completely the above degree growth conditions considering
arbitrary (nonconstant) polynomials taking on positive integer values on
the set $\bbN$ of positive integers and which are ordered so that $q_1(n)<q_2(n)<...
<q_\ell(n)$ for sufficiently large $n$. In particular, 
$\deg{q_{i+1}}\geq \deg{q_i}$ where equality is allowed and  some
of the differences $q_{i+1}(n)-q_i(n)$ may be (positive) constants, 
while others converge to $\infty$ as $n\to\infty$. We also recall
that the Cramer rule for linear equations $q_i(\bbN)\subset\bbN,\,
i=1,...,\ell$ implies that these polynomials must have rational coefficients.
The main goal of this paper  is to derive a functional CLT
for nonconventional expressions of the form (\ref{1.1}), where $q_i$, $i=1,...,\ell$
are general polynomials described above, $\bar F$ is the same as in (\ref{1.1})
if $q_{i+1}(n)-q_i(n)\to\infty$ as $n\to\infty$ for all $i=1,...,\ell-1$, while
when some of these differences are constants then $\bar F$ has a different
form described in the next section. As part of our proof we show that
$D^2=\lim_{N\to\infty}E\xi^2_N(1)$ exists.

We observe that \cite{KV} allows more general than polynomial nonlinear indexes $q_i(n)$
for $i>k$ only because the growth conditions on these indexes there imply that the
corresponding
limiting covariances are zero, which requires only some estimates. Here we are in the
situation where we have to ensure existence of limiting covariances which are not zero,
in general, which requires precise knowledge of the algebraic form of indexes. In short,
ensuring zero limits one needs only some estimates, while nonzero limits require more
precise knowledge of the indexes $q_1(n),...,q_\ell(n)$
 under consideration, which  leads us  to certain number
theory questions concerning  polynomials that  are resolved in Section \ref{sec4}.

After resolving  the  limiting covariances question,  
we adapt to our situation
 the  martingale approximation technique developed in \cite{KV},
and deduce the appropriate  CLT. The special
difficulty arises from the possibility of stretches of $m>1$ nonlinear
polynomials $q_i(n),...,q_{i+m-1}(n)$ of equal degree, which was not allowed
in \cite{KV}, and we overcome this difficulty relying on the number theory results from Section 
\ref{sec4}.

As soon as a  CLT is proved, it is natural to obtain conditions for
positivity of the limiting variance $D^2$, since $D^2=0$
only means that $\xi_N(1)$ converges to $0$ in the $L^2$ sense
which is less interesting than a ``true'' CLT in which $D^2>0$.
Moreover, when $D^2>0$ it becomes meaningful
to establish convergence rates in the CLT (i.e. Berry-Esseen
type estimates) and to prove a central local limit theorem.
This positivity question was not addressed in \cite{KV}. In \cite{HK} we
resolved this question  in the setup of \cite{KV}, and here we
resolve it in the polynomial setup of this paper.
Some of our conditions  are new even in the setup
of \cite{KV}, where the positivity question is nontrivial only when
$k=\ell$, which is a particular case of our setup here.

Relying on the algebraic structure of the family of polynomials $\{ q_1,...,q_\ell\}$
we  study finer properties of the weak limit $\eta$ of $\xi_N$ as $N\to\infty$.
The process $\eta$ turns out to be Gaussian but as a counterexample from \cite{KV} shows
it may have dependent increments. Still, under some  algebraic conditions, we show that the 
 increments of $\eta$ are independent on a broad
family of time intervals. 
Moreover,
under certain conditions $\eta$ turns out to be a process with stationary and 
independent increments.

As in \cite{KV}  our results hold true when, for instance,
$X(n)=T^nf$ where $f=(f_1,...,f_d)$, $T$ is a mixing subshift
of finite type, a hyperbolic diffeomorphism or an expanding transformation
taken with a Gibbs invariant measure, as well as in the case when
$X(n)=f(\Upsilon_n), f=(f_1,...,f_d)$ where $\Upsilon_n$
is a Markov chain satisfying the Doeblin condition considered as a
stationary process with respect to its invariant measure. In the dynamical
systems case each $f_i$ should be either H\" older
continuous or piecewise constant on elements of Markov partitions.
As an application we can consider $F(x_1,...,x_\ell)=x_1^{(1)}\cdots
 x_\ell^{(\ell)}$,
$x_j=(x_j^{(1)},...,x_j^{(\ell)})$, $X(n)=(X_1(n)),...,X_\ell(n))$,
$X_j(n)=\bbI_{A_j}(T^n x)$ in the dynamical systems case and
$X_j(n)=\bbI_{A_j}(\Upsilon_n)$ in the Markov chain case where
$\bbI_{A}$ is the indicator of a set $A$. Let $N(n)$ be the number of $l$'s
between $0$ and $n$ for which $T^{q_{j}(l)}x\in A_j$ for $j=0,1,...,\ell$
(or $\Upsilon_{q_{j}(l)}\in A_j$ in the Markov chains case), where we set $q_{0}=0$, namely
the number of $\ell-$tuples of return times to
$A_j$'s (either by $T^{q_j(l)}$ or by $\Upsilon_{q_j(l)}$). Then
our result yields a functional central limit theorem for the number $N([tn])$.
For some other applications of nonconventional limit theorems we refer the
reader to \cite{KV}.

\section{Preliminaries and main results}\label{sec2}\setcounter{equation}{0}

Our setup consists of a $\wp$-dimensional stochastic process $\{X(n),n\geq0\}$
on a probability space $(\Omega,\cF,P)$ and a nested family
of $\sig-algebras$ $\cF_{k,l}$, $-\infty\leq k\leq l\leq\infty$
such that $\cF_{k,l}\subset\cF_{k',l'}$ if $k'\leq k$
and $l'\geq l$. We measure the dependence between two sub $\sig-algebras$
$\cG,\cH\subset\cF$ via the quantities
\begin{equation}\label{2.1}
\varpi_{q,p}(\cG,\cH)=\sup\{\|E[g|\cG]-E[g]\|_p\,:  g\in L^q(\Om,\cH,P)\,
\,\mbox{and}\,\, \|g\|_q\leq 1\}.
\end{equation}
Then more familiar mixing (dependence) coefficients can be expressed
via the formulas  (see \cite{Br}, Ch. 4),
\begin{eqnarray*}
&\al(\cG,\cH)=\frac14\varpi_{\infty,1}(\cG,\cH),\,\,\rho(\cG,\cH)=
\varpi_{2,2}(\cG,\cH),\\
&\phi(\cG,\cH)=\frac12\varpi_{\infty,\infty}(\cG,\cH)\,\,\,\mbox{and}\,\,\,
\psi(\cG,\cH)=\varpi_{1,\infty}(\cG,\cH).
\end{eqnarray*}
 We also set
\begin{equation}\label{2.2}
\varpi_{q,p}(n)=\sup_{k\geq0}\vp_{q,p}(\cF_{-\infty,k},\cF_{k+n,\infty})
\end{equation}
and accordingly
\begin{equation*}
\al(n)=\frac14\varpi_{\infty,1}(n),\,\,\rho(n)=\varpi_{2,2}(n),
\,\,
\phi(n)=\frac12\varpi_{\infty,\infty}(n),\,\,\psi(n)=\varpi_{1,
\infty}(n).
\end{equation*}
See \cite{Br} and  Section 2 of  \cite{KV} for additional clarification and
 relations between the quantities from (\ref{2.2}). 

 In order to ensure some applications,
 in particular, to dynamical systems we do not assume that $X(n)$
is measurable with respect to $\cF_{n,n}$ but instead impose conditions
on the approximation rate
\begin{equation}\label{2.3}
\beta_q(r)=\sup_{k\geq0}\|X(k)-E(X(k)|\cF_{k-r,k+r})\|_q.
\end{equation}

Next, let $F=F(x_1,...,x_\ell)$, $x_j\in\bbR^\wp$
be a function on $(\bbR^\wp)^\ell$ such that for some $K,\iota>0$,
$\kappa\in(0,1]$ and all $x_i,z_i\in\bbR^\wp$, $i=1,...,\ell$,
we have
\begin{equation}\label{2.4}
|F(x)-F(z)|\leq K[1+\sum_{i=1}^\ell(|x_i|^\iota+|z_i|^\iota)]
\sum_{i=1}^\ell|x_j-z_j|^\kappa
\end{equation}
 and
\begin{equation}\label{2.5}
|F(x)|\leq K[1+\sum_{i=1}^\ell|x_i|^\iota]
\end{equation}
where $x=(x_1,...,x_\ell)$ and  $z=(z_1,...,z_\ell)$.

Let the nonconstant  polynomials   $q_i, i=1,...,\ell$  satisfy
$q_i(\bbN)\subset \bbN$ and  for sufficiently large $n$,
\begin{equation*}
q_1(n)<q_2(n)<...<q_\ell(n).
\end{equation*}
Then  for any $i=1,...,\ell$,
\[
\lim_{n\to\infty}(q_i(n+1)-q_i(n))>0,\,\,\lim_{n\to\infty}q_i(n)=\infty
\]
while for any $i=1,...,\ell-1$,
\begin{equation}\label{2.6}
 \deg q_{i+1}\geq\deg q_i\,\,\text{ and }\,\,
\lim_{n\to\infty}(q_{i+1}(n)-q_i(n))>0
\end{equation}
which means that these differences are either positive constants or tend to $\infty$ as 
$n\to\infty$. 
We remark that $\lim_{n\to\infty}q_i(n)=\infty$ implies that $q_i$'s have
 positive leading coefficients. Employing  Cramer's rule for solutions of
 systems of linear equations we conclude easily from
 $q_i(\bbN)\subset \bbN$ that these polynomials
 have rational coefficients. Let $0<r_1<r_2<...<r_{\hat\ell-1}<\ell$ be
 all indexes such that for $i=r_j,\, j=1,...,\hat\ell-1$ the limits in
 (\ref{2.6}) equal $\infty$ and set $r_0=0$ and $r_{\hat\ell}=\ell$.
Then $q_i-q_{r_s+1}=k_i\in\bbN$ is constant for any  $0\leq s\leq
\hat\ell-1$  and $r_s< i\leq r_{s+1}$. Let
\begin{equation}\label{2.7}
 \hat D=\{q_i-q_{r_s+1}: 0\leq s\leq \hat{\ell}-1 \,\,\mbox{and}\,\, r_s<i
 \leq r_{s+1} \}
\end{equation}
be the set of the above constant differences 
 and set $\hat\wp=
|\hat D|\wp $, where $|\Gam|$ denotes the cardinality of a finite set $\Gam$.

We do not require stationarity of the process $\{X(n), n>0\}$, 
assuming only that the distribution of $X(n)$ does not depend on $n$  and
the joint distribution of $\big(X(n_1),X(n_2),...,X(n_{2|\hat D|})\big)$ depends
only on $n_i-n_{i-1},\, i=2,3,...,2|\hat D|$ which we write for further
reference by
\begin{equation}\label{2.10}
X(n)\thicksim\mu\,\,\,\mbox{and}\,\, \big(X(n_1),X(n_2),...,X(n_k)\big)\thicksim
\mu_{n_2-n_1,n_3-n_2,...,n_k-n_{k-1}}
\end{equation}
where $k\leq2|\hat D|$ and $Y\thicksim\mu$ means that $Y$ has $\mu$ for its
distribution. 
Let $\nu_i$ be the distribution of 
$(X(0),X(q_{r_{i-1}+2}-q_{r_{i-1}+1}),...,X(q_{r_i}-q_{r_{i-1}+1}))$, i.e. 
$\nu_i=\mu_{q_{r_{i-1}+2}-q_{r_{i-1}+1},...,q_{r_i}-
q_{r_{i-1}+1}}$. If all differences $q_{i+1}(n)-q_i(n)$ tend to
 $\infty$ as $n\to\infty$ then $r_i=i$, $\nu_i=\mu$, $|\hat D|=1$ 
and the second condition in
 (\ref{2.10}) reduces to $(X(n_1),X(n_2))\thicksim\mu_{n_2-n_1}$,
 which was assumed in \cite{KV}.

For each $\te>0$, set
\begin{equation}\label{2.11}
\gam_\te^\te=\|X(n)\|_\te^\te=\int|x|^\te d\mu.
\end{equation}
Our results rely on the following assumption.
\begin{assumption}\label{ass2.1}
With $d=(\hat\ell-1)\hat\wp$ there exist $\infty>p,q\geq1$, and $\del,m>0$
with $\delta<\kappa-\frac dp$ satisfying
\begin{eqnarray}
\theta(q,p)=\sum_{n=0}^\infty\varpi_{q,p}(n)<\infty\label{2.12}\hskip1cm\\
\Lambda(q,\delta)=\sum_{r=0}^\infty\big(\beta_q(r)\big)^\del<\infty\label{2.13}\hskip1cm\\
\gamma_{m}<\infty,\gamma_{2q\iota}<\infty\,\,\,\text{ with }\,\,\,\frac12\geq
\frac1p+\frac{\iota+2}m+\frac\del q.\label{2.14}
\end{eqnarray}
\end{assumption}

To simplify  formulas we assume the centering condition
\begin{equation}\label{2.8}
\brF =\int F(y_1,...,y_{\hat\ell})d\nu _1(y_1)\cdots d\nu_{\hat\ell}
(y_{\hat\ell})=0
\end{equation}
where $y_i=(x_{r_{i-1}+1},x_{r_{i-1}+2},....,x_{r_i}),\,\,i=1,...,\hat\ell$.
Condition (\ref{2.8}) is not really a restriction since
we can always replace $F$ with $F-\brF$. It follows from Lemma 4.3 in
\cite {KV} that  $\brF$ is the limit of the expectations
$EF\big(X(q_1(n)),...,X(q_\ell(n))\big)$ as
$n\to\infty$.
Notice that if all differences $q_{i+1}(n)-q_i(n)$ tend to
$\infty$ as $n\to\infty$, then 
as in \cite{KV},
\begin{eqnarray}
\bar F=\int F(x_1,...,x_\ell)d\mu(x_1)\cdots\mu(x_\ell).
\end{eqnarray}

Our first goal  is to prove a functional central limit theorem for
\begin{equation}\label{2.9}
\xi_N(t)=\frac1{\sqrt N}\sum_{n=1}^{[Nt]} F\big(X(q_1(n)),...,
X(q_\ell(n))\big)
\end{equation}
with the function $F$ and the polynomials $q_i,\, i=1,...,\ell$ described above.

It will be convenient to represent
the function $F=F(x_1,...,x_\ell)=F(y_1,...,y_{\hat\ell})$ in the form
\begin{equation}\label{2.15}
F=F_1(y_1)+...+F_{\hat\ell}(y_1,...,y_{\hat\ell})
\end{equation}
where for $i<\hat\ell$,
\begin{eqnarray}\label{2.16}
F_i(y_1,...,y_i)=\int F(y_1,...,y_i,w_{i+1},...,w_{\hat\ell})d\nu_{i+1}(w_{i+1})
\cdots d\nu_{\hat\ell}(w_{\hat\ell})\\
-\int F(y_1,...,y_{i-1},w_i,...,w_{\hat\ell})d\nu_i(w_i)\cdots d\nu_{\hat\ell}
(w_{\hat\ell})\hskip1.cm\nonumber
\end{eqnarray}
and
\begin{equation}\label{2.16+}
F_{\hat\ell}(y_1,...,y_{\hat\ell})=F(y_1,...,y_{\hat\ell})-\int
F(y_1,...,y_{\hat\ell-1},w_{\hat\ell})d\nu_{\hat\ell}(w_{\hat\ell})
\end{equation}
which ensures that
\begin{equation}\label{2.17}
\int F_i(y_1,...,y_{i-1},w_i)d\nu_i(w_i)=0 \,\,\forall  y_1,...,y_{i-1}.
\end{equation}

Next, let $0=i_0< i_1<...<i_v=\ell $ and $m_1<m_2<...<m_v$
 be such that $\deg{q_i}=m_k$ whenever  $ i_{k-1}<i\leq i_k$.
Then  $\{i_k\}_{k=0}^v\subset \{r_k\}_{k=0}^{\hat\ell}$ and
we can  write
\begin{equation}\label{2.18}
q_i(x)=\sum_{s=0}^{m_k}a_s^{(i)}x^s\,\,\,\mbox{if}\,\,\,\, i_{k-1}<i\leq{i_k}.
\end{equation}
For any $i_{k-1}<i,j\leq i_k$ set   $c_{i,j}=\big(\frac{a_{m_k}^{(j)}}
{a_{m_k}^{(i)}}\big)^{\frac1{m_k}}>0$ which can be written also as
$c_{i,j}=\lim_{x\to\infty}\frac{q_i^{-1}(q_j(x))}x$.
Observe that for any  $1\leq s\leq\hat\ell$  there exists a unique $k$ such
that $  i_{k-1}\leq r_{s-1}< r_s\leq i_k$ and set
\begin{equation}\label{2.18+}
\xi_{s,N}(t)=\frac1{\sqrt N}\sum_{n=1}^{[Ntc_{r_s,i_{k-1}+1}]}F_s
\big(X(q_1(n)),...,X(q_{r_s}(n))\big).
\end{equation}
The following definition is important.  We say  that 
two polynomials $q$ and $p$ are \emph{equivalent} and write $q\equiv p$ if there 
exist $a,b,c\in\bbQ$ satisfying $q(y)=p(ay+b)+c$ for any $y\in\bbR$. This is 
clearly an  equivalence relation, and  we denote by $\mathcal A$ the set of all
equivalence classes. It is clear that $q\equiv p$
implies $\deg q=\deg p$, and for any $A\in\mathcal A$ let $d_A$ be the 
mutual degree of the members of $A$. We note that the class   $\cL_1$ of linear polynomials
with rational coefficients contains all the linear polynomials among $q_1,...,q_\ell$.

Next, by (\ref{2.15}) we can write
\begin{equation}\label{2.19}
\xi_N(t)=\sum_{k=1}^v\xi_N^{(k)}(t)=\sum_{A\in\mathcal A}\xi_N^{(A)}(t)
\end{equation}
where
$\xi_N^{(k)}(t)=\sum_{s:\, i_{k-1}<r_s\leq i_k}\xi_{s,N}
(c_{i_{k-1}+1,r_{s}}t)$ and 
$\xi_N^{(A)}(t)=\sum_{s: q_{r_s}\in A}\xi_{s,N}(c_{i_A,r_s}t)$, 
where $i_A=i_{k_A-1}+1$ and
 $k_A$ is such  $\deg q_j=m_k$ for any $q_j\in A$. We note that $\xi_N^{(1)}(t)=\xi_N^{(\cL_1)}(t)$
when $q_1$ is linear.

\subsection{Central limit theorem}
Our main result is the following theorem.
\begin{theorem}\label{thm2.2}\textbf{(i)}
Suppose that Assumption \ref{ass2.1} is satisfied. Then the $\hat{\ell}-$dimensional
process $\{\xi_{i,N}(t)\}_{i=1}^{\hat{\ell}}$ converges in distribution
as $N\to\infty$ to a centered $\hat\ell$-dimensional Gaussian process
$\{\eta_i(t)\}_{i=1}^{\hat\ell}$ with stationary independent increments
 and covariances having the form
\begin{equation}\label{2.19+}
E\big(\eta_i(s)\eta_j(t)\big)=\min(s,t)D_{i,j}=\lim_{N\to\infty}E\big(\xi_{i,N}(s)
\xi_{j,N}(t)\big).
\end{equation}
For any $i$ and $j$ such that $\deg{q_{r_i}}=\deg{q_{r_j}}$ the limit
$D_{i,j}$ is given by Propositions \ref{prop5.2} and \ref{prop5.3}.

\textbf{(ii)} For any $A\in\mathcal A$ set $\eta_A=\{\eta_i:q_{r_i}\in A\}$.
Let  $A,B\in\mathcal A$ be two
distinct equivalence classes.  Then $D_{i,j}=0$ if 
$q_{r_i}\in A$ and $q_{r_j}\in B$, making the vector valued processes
 $\eta_A$ and $\eta_B$  independent. 
In particular  $D_{i,j}=0$ if $\deg{q_{r_i}}\neq\deg{q_{r_j}}$ and  
the vector valued processes $\{\eta_i:\deg q_{r_i}=d\}$ are independent for different $d$'s.
Moreover, suppose that $\deg q_{r_i}>1$. Then the variance
of $\eta_i(t)$ is given by $tD_{i,i}$, where
\begin{eqnarray}\label{2.19.1}
&D_{i,i}=c_{r_i,i_{k-1}+1}\int F_i^2(y_1,...,y_i)d\nu_1(y_1)
d\nu_2(y_2)\cdot\cdot\cdot d\nu_i(y_i).
\end{eqnarray}
Here $1\leq k\leq\hat\ell$ is the unique integer 
satisfying $i_{k-1}<r_i\leq i_k$,  which means that $\deg q_{r_i}=m_k$.

\textbf{(iii)} Finally, the distribution of the process $\xi_N(\cdot)$  converges
to a Gaussian process $\eta(\cdot)$
which can be represented in the form
\begin{equation}\label{2.19++}
\eta(t)=\sum_{k=1}^v\,\sum_{s:\, i_{k_A-1}<r_s\leq i_k}\eta_s
(c_{i_{k-1}+1,r_s}t)=\sum _{A\in\mathcal A}\,\sum_{s: q_{r_s}\in A}
\eta_s(c_{i_A,r_s}t)
\end{equation}
where $i_A=i_{k_A-1}+1$ and $k=k_A$ is such that $\deg  q_i=m_k$ for any $q_i\in A$. 
The process $\eta(\cdot)$ may not have independent increments 
if there exist $s\not=s'$ such that $q_{r_s}\equiv q_{r_{s'}}$.
Moreover, $var(\eta(t))=tD^2$ where
\begin{equation}\label{2.20}
D^2=\lim_{N\to\infty}E\xi^2_N(1)=
\sum _{A\in\mathcal A} D_A^2
\end{equation}
and
\begin{equation}\label{DA2}
D_A^2=\lim_{N\to\infty}E\big(\xi_N^{(A)}(1)\big)^2=
\sum_{s:q_{r_s}\in A}c_{i_A,r_i}D_{i,i}
+2\sum_{i<j:q_{r_i},q_{r_j}\in A}c_{i_A,r_i}D_{i,j}.
\end{equation}
\end{theorem}

The strategy of the proof of Theorem \ref{thm2.2} is based on
martingale approximations of each process $\{\xi_{i,N}(t): i_{k-1}<i\leq i_k\},\,
k=1,...,v$, which are constructed after computation of the asymptotic
covariances appearing in (\ref{2.19+}).
These pose additional difficulties here in comparison to the linear situation
$q_i(n)=in$ considered in \cite{KV}, since  we allow now polynomials $q_j,\, q_{j+1}$
with the same bigger than 1 degree which was prohibited in \cite{KV} and
restricted generality there. Moreover, we allow here polynomials $q_j,\,
q_{j+1}$ which differ only by a constant so that in this case $X(q_j(n))$
and $X(q_{j+1}(n))$ are not weakly dependent even for large $n$ which was
crucial for the proof in \cite{KV}. Nevertheless, in Section \ref{sec3} we
make a reduction to the case where the latter situation is eliminated and
$q_{j+1}(n)-q_j(n)$ tends to $\infty$ as $n\to\infty$ for all $j$. The study
of covariances $E F_i\big(X(q_1(n)),...,X(q_i(n))\big)F_j\big(X(q_1(m)),...,
X(q_j(m))\big)$ when deg$q_i=$deg$q_j$ and the described above
martingale approximations construction lead to certain number
theory questions  which were considered in \cite{KV} only in the case
$\deg{q_i}=\deg{q_j}=1$.
Here we have to deal with them also for degrees higher than 1 which leads to some
(number theory) questions concerning polynomials which we resolve in
Section \ref{sec4}.

 \subsection{Positivity of $D^2$}
A crucial problem in any CLT is to specify when the limiting Gaussian distribution
is nondegenerate, i.e. it has a positive variance $D^2$, which 
by (\ref{2.20})  is equivalent to existence of $A\in\cA$ such that $D_A^2>0$. 
For any $A\in\mathcal A$ let $m=m(A)$ be the minimal natural number such that one can write 
\begin{eqnarray}\label{A-decomp}
&A\cap\{q_{r_1},...,q_{r_{\hat\ell}}\}=\bigcup_{l=1}^mA_{m,l}
\end{eqnarray} 
where for any $l$ and
 $q,p\in A_{m,l}$ there exist $z,k\in\bbZ$ such that  $q(y)=p(y-z)+k$, for any $y\in\bbR$. 
Next, for any $C\subset A$ set 
$D_C^2=\lim_{N\to\infty}N^{-1}E(\sum_{s: q_{r_s}\in C}\xi_{s,N}(c_{i_A,r_s}))^2$. 
For any $i=1,2,...,\hat\ell$ let
\begin{eqnarray}\label{VARS}
&b_{s,i}\in(\bbR^\wp)^{r_s-r_{s-1}},\, 1\leq s\leq i\,\,\,\,\text{ and }\,\,\,\,
b_i=(b_{1,i},...,b_{i,i})
\end{eqnarray}
be sets of variables.
For any $I\subset\{q_{r_1},...,q_{r_{\hat\ell}}\}$
consider the variable 
$b^{(I)}=(b_i)_{q_{r_i}\in I}$ and let the function $G_I$ be defined by
$G_I(b^{(I)})=\sum_{i: q_{r_i}\in I} F_i(b_i)$.  

\begin{theorem}\label{thm2.3}Suppose that Assumption \ref{ass2.1} is satisfied
and set $A_l=A_{m(A),l}$, $l=1,2,...,m(A)$.

\textbf{(i)} $D^2_A=0$ if and only if  $D_{A_l}^2=0$ for any $1\leq l\leq m(A)$.  
In particular,  when $q_1$ is linear then 
$D_{\cL_1}^2=\lim_{N\to\infty}E\big(\xi_N^{(1)}(1)\big)^2=0$ if 
and only if $D_{i,j}=0$ for any 
linear $q_{r_i}$ and $q_{r_j}$.

\textbf{(ii)} Suppose that $A$ consists of nonlinear polynomials. Then
there exists a family of measures $\ka_{A_l}$, $1\leq l\leq m(A)$ (which will be defined
in  Subsection (\ref{Meas})
such that 
\begin{eqnarray}
&D_{A_l}^2=\int G_{A_l}^2(b^{(A_l)})d\ka_{A_l}(b^{(A_l)})
\end{eqnarray}
for any $l$. In particular, $D_{A_l}^2$ vanishes if and only if
$G_{A_l}$ vanishes $\ka_{A_l}$-almost surely. As a consequence, $D_A^2$ vanishes
if and only if $G_A$ vanishes  $\ka_A=\prod_{l=1}^m \ka_{A_l}$-almost surely. Moreover,
let $j$ be such that for any $i\not=j$
there exist no $z,k\in\bbZ$ satisfying $q_{r_j}(y)=q_{r_i}(y-z)+k$ for any $y\in\bbR$. Then
 $D^2_A>0$, unless the  function $F_j$ vanishes  
$\nu_1\times\cdots\times\nu_{\hat\ell}$-almost surly.
\end{theorem}

Suppose that $q_1$ is linear and
let $k$ be such that $i_1=r_k$. 
Let $G=G(x_1,...,x_{i_1})=G(y_1,...,y_k)$ be a function satisfying (\ref{2.4})-(\ref{2.5}) with 
$i_1$ in place of $\ell$ 
 and (\ref{2.8}) with $k$ in place of $\hat\ell$. Set
\begin{eqnarray*}
&G_n=G\big(X(q_1(n)),...,X(q_{i_1}(n))\big)\,
 \text{ and }\,
\sig^2=\lim_{N\to\infty} \frac1N E\big(\sum_{n=1}^NG_n\big)^2
\end{eqnarray*}
which exists by Theorem \ref{thm2.2}.
For any $i=1,...,k$  let $Y^{(i)}=\{Y^{(i)}(n),  n\geq 0\}$ be independent (in general, vector) processes
such that $Y^{(i)}$ and $\{(X(n+d_{j,i}))_{j=1}^{r_i-r_{i-1}},\,n\geq0\}$
have the same distribution, where $d_{j,i}=q_{r_{i-1}+j}
-q_{r_{i-1}+1}$, which is a constant nonnegative  integer.
\begin{theorem}\label{thm2.4}
Suppose that  Assumption \ref{ass2.1} is satisfied and that
\begin{eqnarray}\label{Mix2}
&\sum_{n\geq1}n\varpi_{q,p}(n)<\infty\, \mbox{ and }\,
\sum_{r\geq 1}r\big(\beta_q(r)\big)^\del<\infty.
\end{eqnarray}

\textbf{(i)} Set $Z_n=G\big((Y^{(i)}(q_{r_{i-1}+1}(n)))_{i=1}^k\big)$ and
$\Sigma_N=\sum_{n=1}^NZ_n$. Then the limit
\begin{eqnarray*}
s^2=\lim_{N\to\infty}\frac1N\mbox{Var}\Sigma_N
\end{eqnarray*}
exists.

\textbf{(ii)} $\sig^2>0$ if and only if $s^2>0$ and the
latter conditions hold true if and only if  there exists  no representation
of the form
\begin{equation}\label{Cob}
Z_n=V_{n+1}-V_n,\,  n=0,1,2...
\end{equation}
where $\{V_{n},\,n\geq 0\}$ is a square integrable weakly (i.e. in the
wide sense) stationary process.
\end{theorem}
Applying Theorem \ref{thm2.4} with $G=F_1+...+F_k$ we obtain conditions
for positivity of $D^2_{\cL_1}$.

\subsection{The increments of $\eta$}
In Theorem \ref{thm2.2} we claim that the increments of the weak limit
$\eta$ may not be independent.
Still, on some time intervals described in the following theorem
these increments turn out to be independent.

\begin{theorem}\label{thm2.7}Suppose that Assumption \ref{ass2.1} is satisfied.
 Let $\mathcal A$ be the set of all equivalence classes of
the equivalence relation defined before Theorem \ref{thm2.2}, and write $q\equiv p$
if $q$ and $p$ lay in the same equivalent class.

\textbf{(i)} Suppose that  $a_m^{(r_i)}>a_m^{(r_j)}$ whenever $q_{r_i}\equiv q_{r_j}$, 
$i>j$ and $m=\deg{q_{r_i}}=\deg{q_{r_j}}$. Assume that there exist $i>j$ such that 
$q_{r_i}\equiv q_{r_j}$ and set
\begin{equation}\label{2.22}
C=\min\{c_{r_j,r_i}:\,i>j,\, q_{r_i}\equiv q_{r_j}\}>1
\end{equation}
where $c_{r_j,r_i}=\big(\frac{a_m^{(r_i)}}{a_m^{(r_j)}}\big)^\frac1m$ if
$\deg {q_{r_i}}=\deg{q_{r_j}}=m$.
Let $0<t_1\leq t_2\leq t_3\leq Ct_1$. Then
\begin{equation}\label{2.23}
E\big[(\eta(t_3)-\eta(t_2))\eta(t_1)\big]=\frac12(t_3-t_2)\Delta
\end{equation}
where $\Delta=D^2-\sum_{A\in\cA}\sum_{s: q_{r_s}\in A}
c_{i_A,r_s}D_{s,s}$ and $i_A$ is defined after (\ref{2.19}). 
Therefore, the increments of $\eta$ are independent when $\eta$ is reduced
to $[K,CK]$, for any $K>0$.
Furthermore, if $\Delta\not=0$ then  
for any
$0<t_2<t_3$ one  can find $0<t_0<t_1<t_2$ such that $\eta(t_3)-\eta(t_2)$ and
$\eta(t_1)-\eta(t_0)$ are not independent.

\textbf{(ii)} Suppose that for any $A\in\mathcal A$ the leading coefficients of
all $q_i\in A$ are the same.
Then $\eta$ has stationary  and independent  increments.
\end{theorem}

We remark that the situation of Theorem \ref{thm2.7}\emph{(ii)} includes the case that 
none of the polynomials $q_{r_i}, i=1,2...,\hat\ell$ are equivalent.
The following corollary follows.

\begin{corollary}\label{cor2.8}
Suppose that $\ell=2$. Then the increments of $\eta$ are independent
 if either $q_1\not\equiv q_2$ or $q_1\equiv q_2$ and 
$c_{1,2}=1$. On the other hand, when $q_1\equiv q_2$ and $c_{1,2}> 1$
then the increments of $\eta$ are independent if and only if $D_{1,2}=0$. When 
$q_1\equiv q_2$ then the asymptotic covariance $D_{1,2}$
 may or may not vanish, both when $c_{1,2}>1$ and $c_{1,2}=1$.
\end{corollary}

\begin{remark}\label{rem2.9} In fact, our proof shows that when
$\deg{q_{r_i}}=\deg{q_{r_j}}>1$ then $D_{i,j}\not=0$
 only if one can write $q_{r_j}(y)=q_{r_i}(c(y-z)+t)+s$
for some $z\in\bbZ$ and  rational $t,s,c$  satisfying $c=\frac{\al}{\be}$,
$\gcd(\al,\be)=1$ and  $t\in\{0,1....,\al-1\}$. 
It follows that $\eta_i$
and $\eta_j$ will also be independent if  one can not
find such $z,t,s$ and $c$ for $q_{r_i}$ and $q_{r_j}$. 
 Thus, some of the conditions from Theorem \ref{thm2.7} can be slightly improved
by imposing some restrictions on such $q_{r_i}$ and $q_{r_j}$.
\end{remark}

\begin{remark}\label{rem2.10}
To shorten formulas and corresponding explanations we assume that the
polynomials $q_j,\, j=1,...,\ell$ are nonconstant. In fact, the setup
allowing also constant "polynomials" can be dealt with in the same way.
Indeed, let $q_{-k}(n)= n_{-k}<q_{-k+1}(n)= n_{-k+1}<\cdots <
q_{-1}(n)= n_{-1}$ be positive integers and we are interested in
proving a functional central limit theorem for expressions of the form
\[
\xi_N(t)=\frac 1{\sqrt N}\sum_{n=1}^{[Nt]}\big(F\big(X(n_{-k}),...,X(n_{-1}),
X(q_1(n)),...,X(q_\ell(n))\big)-\brF\big)
\]
where $q_1,...,q_\ell$ are polynomials described before (\ref{2.6}) and $\brF=\brF(\om)$ is
a centralizing random variable  defined by
\[
\brF=\int F(X(n_{-k}),...,X(n_{-1}),y_1,...,y_{\hat\ell})
d\nu_1(y_1)\cdots d\nu_{\hat\ell}(y_{\hat\ell}).
\]
The first step is the representation
\[
 F(y_{-1},y_1,...,y_{\hat\ell})-
\int F(y_{-1},y_1,...,y_{\hat\ell})d\nu_1(y_1)\cdots d\nu_{\hat\ell}(y_{\hat\ell})=
\sum_{i=1}^{\hat\ell}F_i(y_{-1},y_1,...,y_i)
\]
where $y_{-1}=(x_{-k},...,x_{-1})$ and $F-i$'s are defined as in
(\ref{2.16})-(\ref{2.17})
replacing $(y_1,...,y_i)$ with $(y_{-1},y_1,...,y_i)$.

Next, our method requires to study covariances and second moments which leads to
expectations
of expressions having the form
\[
Q=G\big( X(n_{-k}),...,X(n_{-1}),X(q_1(n)),...,X(q_\ell(n)),X(q_1(m)),...,
X(q_\ell(m))\big).
\]
Set also
\[
R(x_{-k},...,x_{-1})=EG\big(x_{-k},...,x_{-1},X(q_1(n)),...,
X(q_\ell(n)),X(q_1(m)),...,X(q_\ell(m))\big)
\]
and $l(n,m)=[\frac 12(a\min(n,m)-n_{-1})]$, where $a>0$ is such that $q_i(n)\geq an$
for any $1\leq i\leq \ell$ and sufficiently large $n$. Then imposing some H\" older and
growth conditions on $G$, which will come from (\ref{2.4}) and (\ref{2.5}) in
corresponding applications, we derive from Corollary 3.6(ii) of \cite{KV} that
\[
\| E(Q|\cF_{-\infty,l(n,m)})-R(X(n_{-k}),...,X(n_{-1}))\|_2
\]
is sufficiently small when $l(n,m)$ is large, and so $|EQ-ER(X(n_{-k}),...,
X(n_{-1}))|$ is also small. This means that in all computations of expectations
and covariances we can view $X(n_{-k}),...,X(n_{-1})$ as constants (i.e.
freeze them), and so they essentially do not influence computations. We observe
that, in fact, we can consider even more general situation obtaining functional central
limit theorem for expressions of the form
\[
\xi_N(t)=\frac 1{\sqrt N}\sum_{n=1}^{[Nt]}\big(F\big(\om,
X(q_1(n)),...,X(q_\ell(n))\big)-\brF\big).
\]
Here $F(\om,x_1,...,x_\ell)$ is a random function which is either measurable with respect
to
$\cF_{-\infty,n}$ for some $n$ or it is well approximable  by conditional
expectations with respect to these $\sig$-algebras in the sense of the
approximation coefficient appearing in (\ref{2.3}) and
\[
\brF(\omega)=\int
F(\om,y_1,...,y_{\hat\ell})d\nu_1(y_1)\cdots d\nu_{\hat\ell}(y_{\hat\ell}).
\]
\end{remark}

\begin{remark}\label{rem2.11}
In \cite{KV} a functional central limit theorem was obtained also for continuous
time nonconventional expressions of the form
\[
\xi_N(t)=\frac 1{\sqrt N}\int_0^{Nt}F(X(q_1(s)),...,X(q_\ell(s)))ds.
\]
Suppose now that all $q_j$'s are polynomials satisfying $q_j(s)\to\infty$ as
$s\to\infty$ (constant "polynomials" can be treated as in Remark \ref{rem2.10}).
The first step is again the representation $F=\sum_{i=1}^{\hat\ell}F_i$ from
(\ref{2.15}) and the corresponding representations (\ref{2.19}). Similarly to
Section 6 in \cite{KV}, we see that if $\max($deg$q_i,$deg$q_j)>1$ then
\[
\lim_{N\to\infty}\frac 1N\int_0^{Nt}\int_0^{Nt}|EF_i(q_1(u)),...,X(q_i(u)))
F_j(q_1(v)),...,X(q_i(v)))|dudv=0.
\]
It follows from here that only $F_i$'s with deg$q_i=1$ play a role in the
central limit theorem for $\xi_N$, and so essentially we reduce the problem
to the setup of \cite{KV}. If, unlike \cite{KV},  some of the  differences
$q_{i+1}-q_i$ are allowed to be constants, then this additional complication can be eliminated
reducing the problem to the case when
$\lim_{t\to\infty}\big(q_{i+1}(t)-q_i(t)\big)=\infty$
for all $i\geq 1$ as described in Section \ref{sec3}.
\end{remark}

\section{Reduction to the case $\hat{\ell}=\ell$}\label{sec3}

\setcounter{equation}{0}
In this section we make a reduction to the case where
all the limits in (\ref{2.6}) equal $\infty$.
We redefine the setup as follows. Set $p_i=q_{r_{i-1}+1},\,\,i=1,...,
\hat\ell$. Then,
\[
 \lim_{n\to\infty}(p_{i+1}(n)-p_i(n))=\infty ,\,\,i=1,....,\hat\ell-1.
\]
Define $0=j_0<j_1<...<j_v=\hat\ell$ so that  $j_{k-1}<j\leq j_k$
if and only if   $\deg{p_j}=m_k$.
Then $r_{j_k}=i_k$ which implies that $c_{r_s,i_{k-1}+1}^{m_k}$ is
 the ratio of the leading coefficients of $p_{j_{k-1}+1}$ and  $p_s$
if $j_{k-1}<s\leq j_k$.
Write  $\hat D=\{0=k_1<k_2<....<k_{|\hat D|}\}$, 
where $\hat D$ is defined in (\ref{2.7}). For any $n\in\bbN$ set
\begin{equation}\label{3.1}
Z(n)=(X(n+k_1),...,X(n+k_{|\hat D|})).
\end{equation}
Then under our assumptions,   $\{Z(n),n\geq0\}$  is a
  $\hat\wp-$dimensional process satisfying Assumption \ref{ass2.1}
 with the same $\{\cF_{n,m}\}$. Furthermore, $Z(n)$ is distributed according to
$\nu=\mu_{_{k_2-k_1,...,k_{|\hat D|}-k_{|\hat D|-1}}}$ and the distribution
of each pair $\big(Z(n),\, Z(m)\big)$ depends only on $m-n$. 

Let $z=(z_1,...,z_{\hat\ell})\in (\bbR^{\hat\wp})^{\hat\ell}$, where
$z_i=(z_{i,k_j})_{j=1}^{|\hat D|}\in (\bbR^\wp)^{|\hat D|}$.
 Set
 $\hat z_i=(z_{i,0},z_{i,q_{r_{i-1}+2}-q_{r_{i-1}+1}},...,z_{i,q_{r_i}-
 q_{r_{i-1}+1}})$ and
\begin{equation}\label{3.2}
 G(z_1,...,z_{\hat\ell})=F(\hat z_1,...,\hat z_{\hat\ell}).
\end{equation}
 It is easy to see that $G$ satisfies conditions (\ref{2.4}) and (\ref{2.5})
 (see Remark 3.3  in \cite{KV}) and
the setup determined by $\hat\ell, \{p_i,\,\,i=1,...,\hat\ell\},
\hat{\wp} \mbox{ and } G$ satisfies our assumptions for the case where all the
 limits in (\ref{2.6}) with $p_i$'s in place of $q_i$'s equal $\infty$.
Observe that
\begin{eqnarray}\label{3.3}
&\hskip0.5cm\bar G=\int G(z_1,...,z_{\hat\ell})d\nu(z_1)\cdots
d\nu(z_{\hat\ell})=
\int F(\hat z_1,...,\hat z_{\hat\ell})d\nu_1(\hat z_1)\cdots
d\nu_{\hat\ell}(\hat z_{\hat\ell})=\brF, \\
&G\big(Z(p_1(n)),Z(p_2(n)),...,Z(p_{\hat\ell}(n))\big)=F\big(X(q_1(n)),....,
X(q_\ell(n))\big)\label{3.6}
\end{eqnarray}
and
\begin{equation}\label{3.4}
G_i\big(Z(p_1(n)),...,Z(p_i(n))\big)= F_i\big(X(q_1(n)),....,X(q_{r_i}(n))\big),
\,\,i=1,...,\hat\ell
\end{equation}
where $G_i$, $i=1,...,\hat\ell$ are  defined for the function $G$ as in
(\ref{2.16})-(\ref{2.17}), replacing $(y_1,...,y_i)$
with $(z_1,...,z_i)$ and $\nu_i$  with $\nu$. Furthermore, for any $i=1,
...,\hat\ell$ define $\zeta_{i,N}(t)$ with (\ref{2.18+}) replacing 
$F_i$ with $G_i$ and $X(q_{r_i}(n))$ with $Z(p_i(n))$, i.e.,
\begin{equation}\label{3.7}
\zeta_{i,N}(t)=\frac1{\sqrt N}\sum_{n=1}^{[Ntc_{r_i,i_{k-1}+1}]}G_i
\big(Z(q_1(n)),...,Z(p_i(n))\big).
\end{equation}
Then
\begin{eqnarray}\label{3.5}
\zeta_{i,N}(t)=\xi_{i,N}(t)
\end{eqnarray}
and now we can  study $\zeta_N=\sum_{i=1}^{\hat\ell}\zeta_{i,N}$ in
place of $\xi_N$.

\section{Asymptotic density of polynomial type }\label{sec4}
\setcounter{equation}{0}

Let $q_1,...,q_k$ be distinct polynomials of degree $m\geq1$ and write 
 $q_i(y)=\sum_{s=0}^ma_s^{(i)}y^s$. We assume that 
$\lim_{y\to\infty} q_i(y)=\infty $ for each $i=1,...,k$, which means that the
 leading coefficients of $q_i$'s are positive and implies that  there exists $R>0$ such that
$q_i$'s are  strictly increasing on $[R,\infty)$. 
Set $c_{i,j}=
 \big(\frac{a_m^{(j)}}{a_m^{(i)}}\big)^\frac1m$ which can  be written also as
$c_{i,j}=\lim_{y\to\infty}\frac {q_i^{-1}(q_j(y))}y$. 

Let $1\leq t_1<t_2<...<t_u\leq  k$ and set 
\begin{eqnarray}\label{Gam}
&\Gam_{t_1,...,t_u}=\{n\in\bbN : q_{t_1}(n)\in\bigcap_{i=1}^uq_{t_i}(\bbN)\}.
\end{eqnarray} 
Then $\Gam_{t_1,...t_u}$ is the set of all natural numbers $n_1$ such that there
exists a solution $(n_2,...,n_u)\in\bbN^{u-1}$ for the equation
\begin{equation}\label{EQN}
q_{t_1}(n_1)=q_{t_2}(n_2)=q_{t_3}(n_3)=...=q_{t_u}(n_u).
\end{equation}
 We are interested in the structure of the set $\Gam_{t_1,...,t_u}$ and 
in particular in showing that its asymptotic density
\begin{equation}\label{4.0}
L(t_1,...,t_u)=\lim_{r\to\infty}\frac{|\Gam_{t_1,...,t_u}\cap[1,r]|}r
\end{equation}
exists, and in providing an explicit formula for it. Here  
$|\Gam|$ denotes the cardinality of a finite set $\Gam$. 

The following notion of equivalence is crucial. 
We say that two polynomials $q$ and $p$ are \emph{0-equivalent} if there exist $a,b\in\bbQ$
such that $p(y)=q(ay+b)$ for any $y\in\bbR$. This is clearly an equivalence relation,
which  is finer than the equivalence relation defined above Theorem \ref{thm2.2}. 
We shall use the following observation. Let $1\leq t_1<t_2<...<t_u\leq  k$. Then  
the polynomials $q_{t_1},q_{t_2},...,q_{t_u}$ are $0$-equivalent if 
and only if  $c_{t_i,t_1}\in\bbQ$  for any $2\leq i\leq u$  
and there exists  $x_i\in\bbQ$, $i=2,3,...,u$ such that
\begin{equation}\label{4.1}
q_{t_i}(y)=q_{t_1}(c_{t_1,t_i}(y-x_i))\,\,\mbox{for any $y\in\bbR$}.
\end{equation}
 The following theorem is the main result of this section.

\begin{theorem}\label{thm4.1}

(i) Let $1\leq t_1<t_2<...<t_u\leq k$. Then  the limit $L=L(t_1,...,t_u)$ from \ref{4.0} exists.  
If $u=1$ then $L=1$, while $L=0$ when  $q_{t_1},...,q_{t_u}$ are not $0$-equivalent.

(ii) Suppose that $u>1$ and that $q_{t_1},...,q_{t_u}$ are $0$-equivalent. Then, up 
to a finite number of elements, 
the  set $\Gam_{t_1,...,t_u}$ defined in (\ref{Gam}) is a union of (possibly $0$)
disjoint arithmetic progressions with common difference $a=a(t_1,...,t_u)$.
As a consequence, 
\begin{eqnarray}\label{Lim-Form}
L(t_1,...,t_u)=\frac{M(t_1,...,t_u)}{a(t_1,...,t_u)}.
\end{eqnarray}
Here $a(t_1,....,t_u)=\emph{\textbf{lcm}}(\al_2,...,\al_u)$ where 
 \emph{\textbf{lcm}} denotes the least common multiple,
\begin{equation}\label{4.01}
M(t_1,...,t_u)=M\big((\al_i,\be_i,x_i)_{i=2}^u\big)=
|(W_2\times...\times W_u)\bigcap V|,
\end{equation}
\begin{eqnarray*}
&W_i=\{0,1,...,\al_i-1\}\bigcap\frac{\al_i}{\be_i}(\bbZ-x_i), \\
&V=\{(w_2,...,w_u): \exists k_2,...,k_u\in\bbN\text{ such that } w_i-w_j=k_j\al_j-k_i\al_i
\,,\,\,\forall i,j\}, 
\end{eqnarray*}
$c_{t_i,t_1}=\frac {\be_i}{\al_i}\in\bbQ $ for some coprime $\al_i,\be_i\in\bbN$
and $x_i$ satisfies (\ref{4.1}), where $i=2,3,...,u$.
\end{theorem}

The following corollary follows.

\begin{corollary}\label{cor4.1}
Set $A=\bigcup_{i=1}^kq_1^{-1}(q_i(\bbN))$. Then  $A$ has the form
$A=\{z_1<z_2<z_3<...\}$ and the limit 
\begin{eqnarray}\label{LimCor}
c=\lim_{r\to\infty}\frac{|A\cap(0,r]|}r
\end{eqnarray}
exists, is not less than $1$ and $\lim_{n\to\infty}\frac{z_n}n=c^{-1}$.
\end{corollary}
Before proving Theorem \ref{thm4.1} and Corollary \ref{cor4.1} we give two simple examples. 
Consider the situation when $q_2(x)=x^m$. Then the
asymptotic density of the set $\{n\in\bbN: \sqrt[m]{q_1(n)}\in\bbN\}$  is zero,
unless  $q_1$  has the special  form $q_1(y)=s^m(y-x_0)^m$ for some $x_0,s\in\bbQ$. In this case
the asymptotic density equals
$\frac{|\{0,1...,\al-1\}\cap s^{-1}(\bbZ+x_0s)|}\al$ where $s=\be/\al$, for some coprime 
integers $\al$ and $\be$.
 Furthermore, consider  the case that $q_1(x)=x^m$. Then Theorem \ref{thm4.1} shows that 
the asymptotic density of the set  of natural numbers $n$ 
such that $n=\sqrt[m]{q_i(l_i)}$ for some $l_i\in\bbN$  and all $2\leq i\leq k$
is zero, unless each
$q_i$ has the special form $q_i(y)=s_i^m(y-x_i)^m$ for some
rational $s_i$ and $x_i$. 
In the case that $s_i$'s and $x_i$'s are integers it  is easy to 
see without relying on Theorem \ref{thm4.1} that the asymptotic density
equals $1/\mbox{lcm}(s_2,...,s_k)$ and  Theorem \ref{thm4.1} generalizes 
this formula for the case that they are not necessarily integers.
 
\begin{proof}
We begin with the proof of Theorem \ref{thm4.1}. Let
$1\leq t_1<t_2<...<t_u\leq k$.
When $u=1$ then $\Gam_{t_1}=\bbN$ and it is clear that $L(t_1)=1$, 
and from now one we assume that $u>1$. We first need the following
result. Let $r\notin\bbQ$, $0<\ve<\frac12$ and
$x\in\bbR$. Set
\[
B(x,r,\ve)=\{l\in\bbN:\,\exists n\in \bbN\mbox{ such that }|n-rl-x|<\ve\}.
\]
Then 
\begin{equation}\label{4.2}
\limsup_{N\to\infty}\frac1N|B(x,r,\ve)\cap[1,N]|\leq 2\ve
\end{equation}
namely, the upper asymptotic density of $B(x,r,\ve)$ does not exceed
$2\ve$.
This follows from fact that $\{(rl)\mbox{ mod }1,\,\, l\in\bbN\}$
is equidistributed on $[0,1)$ and that the condition $|n-rl-x|<\ve$
implies that $(rl)\mbox{ mod }1$ lies in a union of at most two intervals
whose lengths do not exceed $\ve$. Next, let $n_1\in\bbN$.
We are interested in solving the equations
\[
q_{t_1}(n_1)=q_{t_2}(n_2)=q_{t_3}(n_3)=...=q_{t_u}(n_u)
\]
where $n_i\in\bbN,\,\,i=2,...,u$.  First,  we make a linear change of
variables by writing $n_i=c_in_1+x_i$, where
 $c_i=\big(\frac{a_m^{(t_1)}}{a_m^{(t_i)}}\big)^{\frac1m}$. 
By Taylor's expansion around $0$  of the polynomial $R_{i,x_i}(y)=q_{t_i}
(c_iy+x_i)-q_{t_1}(y)$, where $x_i$ is considered as a parameter, we obtain
 that
\begin{equation}\label{4.3}
q_{t_i}(c_in_1+x_i)-q_{t_1}(n_1)=\sum_{s=0}^{m-1}g_{i,s}(x_i)
n_1^s
\end{equation}
where
\[
g_{i,s}(x_i)=\frac1{s!}(c_i^sq_{t_i}^{(s)}(x_i)-q_{t_1}^{(s)}(0))
\]
and $f^{(s)}$ denotes the $s-$th derivative of a function $f$.
Now considering $g_{i,s}(x_i)$ as a function of $x_i$ we see that
$\lim_{x_i\to\infty}g_{i,s}(x_i)=\infty$ for any $i=2,...,u$ and $s=0,...,m-1$.
 Therefore, there exists $K_1>0$ such that a solution for the equation
 $q_{t_1}(n_1)=q_{t_i}(n_i)$ can not exist if  $x_i>K_1$.
By writing $n_1=c_i^{-1}n_i-c_i^{-1}x_i$,
repeating the above arguments and exchanging  $n_1$ and
$n_i$ we see that there exists $K_2>0$ such that a solution can not exist
 if $x_i<-K_2$.

Next, suppose that $c_i\notin\bbQ$
for some $i\in\{2,...,u\}$. Then $q_{t_1}$ and $q_{t_i}$ are not $0$-equivalent
and we need to prove that the limit $L(t_1,...,t_u)$ from  (\ref{4.0}) vanishes.
Suppose that $|x_i|\leq K_0$, where $K_0=\max(K_1,K_2)$.
Let $y_i$ be the root of the linear polynomial $g_{i,m-1}$ and let $0<\ve<
\frac12$. Assume that
\[
n_1\in\bbN \setminus B(y_i,c_i,\ve)
\]
i.e. that $|n_i-c_in_1-y_i|=|x_i-y_i|\geq\ve$ for any $n_i\in\bbN$.
Since $|x_i|\leq K_0$, 
 $g_{i,s},\,\,s=0,...,m-1$ are continuous
and $g_{i,m-1}$ is linear we deduce that there exist $C_1,C_2>0$ independent of 
$x_i$ such that 
\[
C_2\geq\max_{0\leq s \leq m-1}|g_{i,s}(x_i)|\mbox{ and }|g_{i,m-1}(x_i)|
\geq C_1\ve.
\]
It follows
that for all $n_1$ large enough there exists no solution for the equation
$q_{t_i}(n_i)=q_{t_1}(n_1)$ since the term
$g_{i,m-1}(x_i)n_1^{m-1}$ dominates the right hand side of (\ref{4.3}). Thus,
taking upper limit and applying (\ref{4.2}) we conclude that
\[
\limsup_{r\to\infty}\frac{|\Gam_{t_1,...,t_u}\cap[1,r]|}r\leq
\limsup_{r\to\infty}\frac{|\{1\leq n\leq r: q_{t_1}(n)\in
q_{t_i}(\bbN)\}|}r\leq 2\ve
\]
where $\Gam_{t_1,...,t_u}$ is defined by (\ref{Gam}). 
 Hence, letting $\ve$ to zero we obtain
$
L(t_1,...,t_u)=\lim_{r\to\infty}\frac{|\Gam_{t_1,...,t_u}\cap[1,r]|}r=0.
$

 Next, suppose that $c_i\in\bbQ$ for each $i=2,...,u$ and that
 $|x_i|\leq K_0$, $i=2,...,u$. Then $x_i=n_i-c_in_1\in\bbN-c_i\bbN\subset\bbQ$ and
\[
|[-K_0,K_0]\cap(\bbN-c_i\bbN)|<\infty
\]
which implies that each $x_i$ can take only finite number of values.
 This together with (\ref{4.3}) yields that for all $n_1$ large enough
 there exists no solution for the equation  $q_{t_i}(n_i)=q_{t_1}(n_1)$,
 unless 
\begin{eqnarray}\label{4.4-}
c_i^sq_{t_i}^{(s)}(x_i)=q_{t_1}^{(s)}(0)\text{ for any } s=0,1,....,m-1
\end{eqnarray}
since  otherwise the right hand side of (\ref{4.3})
 is not zero for large $n_1$.  By Taylor's
 expansion around $x_i$, (\ref{4.4-}) is equivalent to 
\begin{equation}\label{4.4}
q_{t_i}(y)=q_{t_1}(c_i^{-1}(y-x_i)),\forall y\in\bbR
\end{equation}
which  means that (\ref{4.1}) is satisfied and $q_{t_1},...,q_{t_u}$ are $0$-equivalent, 
taking into account that $c_i\in\bbQ$ for any $2\leq i\leq u$. 
Thus, the set $\Gam_{t_1,...,t_u}$ is finite if $q_{t_1},...,q_{t_u}$ are not
 $0$-equivalent and then clearly the limit $L(t_1,...,t_u)$ from \ref{4.0} vanishes.  
On the other hand, (\ref{4.4}) implies that $n_i=c_in_1+x_i$ solves the
equation $q_{t_i}(n_i)=q_{t_1}(n_1)$. This solution is unique 
 when $n_1$ is sufficiently large,  since $q_{t_1}$ is strictly increasing on some ray $[R,\infty)$. 
It remains to check whether $n_i\in\bbN$ when $x_i$ satisfies (\ref{4.4})
and $n_1$ is sufficiently large. Write 
$c_i=\frac{\be_i}{\al_i}$. We demand that  $c_in_1+x_i$ should be
 a natural number. Let $n_1\in\bbN$ and for any $i=2,...,u$ write 
 $n_1=k_i\al_i+w_i$ for some nonnegative integers
$k_i,w_i$ such that  $0\leq w_i<\al_i$.
 Then, $c_in_1+x_i=k_i\be_i+(\frac{w_i\be_i}{\al_i}+
 x_i)$.
This is a natural number for large enough $n_1$
if and only if $\frac{w_i\be_i}{\al_i}+
x_i$ is an integer, i.e. if and only if
\[
n_1\in\bigcap_{i=2}^u\{n\in\bbN:n\mbox{ mod }\al_i\in W_i\}=
\bigcup_{j_i\leq d_i,\, i=2,...,u}B(\om_{j_2}^{(2)},...,\om_{j_u}^{(u)})=B
\]
where the latter is a disjoint union. Here
\[
W_i=\{0,1,...,\al_i-1\}\bigcap\frac{\al_i}{\beta_i}(\bbZ-x_i)=
\{w_1^{(i)},...,w_{d_i}^{(i)}\},
\]
and
\begin{eqnarray}\label{4.5+}
&B(\om_{j_2}^{(2)},...,\om_{j_u}^{(u)})=\{n\in\bbN: n\mbox{ mod }\al_i=w_{j_i}^{(i)},i=2,...,u\}.
\end{eqnarray}
It follows that
the sets $B$ and  $\Gam_{t_1,...,t_u}$ defined by (\ref{Gam})
differ only by a finite number of elements, and 
observe that $M(t_1,...,t_u)$ defined in (\ref{4.01}) equals the
number $M$ of  nonempty sets of the form  (\ref{4.5+}).
Hence, existence of the limit $L(t_1,...,t_u)$ from (\ref{4.0}) and the equality
 $L=M(t_1,...,t_u)/a(t_1,...,t_u)$ will follow from 
\begin{equation}\label{4.5}
\lim_{r\to\infty}\frac1r|B\cap[1,r]|=\frac{M(t_1,...,t_u)}{a(t_1,...,t_u)}=\frac Ma
\end{equation}
 where $a=\textbf{lcm}(\al_2,...,\al_u)$.

Establishing (\ref{4.5}), we first claim that if $B(\om_{j_2}^{(2)},...,\om_{j_u}^{(u)})$
is not empty then there exists $n_0\in\bbN$ such that
\begin{equation}\label{form}
B(\om_{j_2}^{(2)},...,\om_{j_u}^{(u)})=\{n_0+ka\}_{k\geq 0}
\end{equation}
where $a=a(t_1,...,t_u)$.
Indeed, let $n_0$ be the smallest member of $B(\om_{j_2}^{(2)},...,\om_{j_u}^{(u)})$
and write  $n_0=k_i\al_i+w_{j_i}^{(i)}$ for some integer $k_i\geq 0$, $i=2,3,...,u$. 
Then for any integer $k\geq 0$ we have
$n_0+ka=(k_i+k\frac a{\al_i})\al_i+w_{j_i}^{(i)}$ which is clearly an element
of $B(\om_{j_2}^{(2)},...,\om_{j_u}^{(u)})$. Thus, 
$\{n_0+ka\}_{k\geq0}\subset B(\om_{j_2}^{(2)},...,\om_{j_u}^{(u)})$ and 
in particular $B(\om_{j_2}^{(2)},...,\om_{j_u}^{(u)})$ is infinite.

On the other hand, let $n\in B(\om_{j_2}^{(2)},...,\om_{j_u}^{(u)})$. Then
for any $2\leq i\leq u$ there exists an integer $k_i\geq 0$ such that $n=k_i\al_i+w_{j_i}^{(i)}$.
Let $m$ be the closest to $n$ element of $B(\om_{j_2}^{(2)},...,\om_{j_u}^{(u)})$
satisfying $m>n$. 
Such element exists since 
this set is infinite. Write
$m=(k_i+m_i)\al_i+w_{j_i}^{(i)}$ where $m_i\in\bbN$ and $2\leq i\leq u$.  
Then $\al_im_i=\al_sm_s$ for any $2\leq i,s\leq u $, which
implies that $\al_2m_2$ is divisible by $\al_2,...,\al_u$, and so
it is also divisible by $a$ and so we can write  $m=n+la$. Thus 
$ B(\om_{j_2}^{(2)},...,\om_{j_u}^{(u)})\subset\{n_0+ka\}_{k\geq0}$  and
(\ref{form}) follows.
We conclude by (\ref{form}) that 
\[
\lim_{r\to\infty}\frac 1r|B(\om_{j_2}^{(2)},...,\om_{j_u}^{(u)})\cap[1,r]|=\frac 1a
\]
 and (\ref{4.5}) follows since the sets  $B(\om_{j_2}^{(2)},...,\om_{j_u}^{(u)})$ and 
$B(\om_{j'_2}^{(2)},...,\om_{j'_u}^{(u)})$ are disjoint when 
$(\om_{j_2}^{(2)},...,\om_{j_u}^{(u)})\not=(\om_{j'_2}^{(2)},...,\om_{j'_u}^{(u)})$.
\\

Now we prove Corollary \ref{cor4.1}.  For each $1\leq i\leq k$ set 
$A_i=q_1^{-1}(q_i(\bbN))$. Then $A=\bigcup_{i=1}^kA_i$.
There exists $R>0$ such that the functions $q_i^{-1}\circ{q_j},
\,\,1\leq i,j\leq k $  are well defined on $[R,\infty)$ and are strictly
increasing there. Thus, up to a finite number of elements, $A$ is a union of 
$k$ increasing sequences and so it  has the form $\{z_1<z_2<z_3<...\}$. 
Next,
by making the change of variables  $x\to (q^{-1}_{t_1}
\circ q_1)(x)$ and  taking into account that
$\lim_{x\to\infty}\frac{q_i^{-1}(q_j(x))}x=c_{i,j}$, we deduce from (\ref{Lim-Form}) 
that for any $u$ and $1\leq t_1<t_2<...<t_u\leq k$, 
\begin{eqnarray}\label{CorLim2}
&\lim_{r\to\infty}\frac{|\bigcap_{j=1}^uA_{t_j}\cap(0,r]|}r=
c_{t_1,1}L(t_1,...,t_u)
\end{eqnarray}
where $L(t_1,...t_u)$ is the limit from (\ref{4.0}), which exists by Theorem \ref{thm4.1}.
Existence of the limit $c$ from (\ref{LimCor}) follows  by (\ref{CorLim2}) and the 
inclusion-exclusion principle. The limit $c$ is not less than one since $\bbN\subset A_1$.
Completing the proof of Corollary \ref{cor4.1}, let $m_0$ be such that $z_{m_0}\leq 0< z_{m_0+1}$
where we set $z_0=0$ if $z_1> 0$. Then $|A\cap (0,z_n]|=n-m_0$
 for any $n>m_0$. Dividing both sides
by $z_n$, taking into account that $\lim_{n\to\infty}z_n=\infty$ 
(since $\bbN\subset A_1$), yields $\lim_{n\to\infty}z_n/n=c^{-1}$. 
\end{proof}
\begin{remark}\label{rem4.2}
The question whether $M(t_1,...,t_u)>0$ is
interesting since this means that the asymptotic density is positive.
As we have the explicit formula (\ref{4.01}), provided $\al_i,\be_i$ and $x_i$
satisfying \ref{4.1} are given, this question can be resolved.
In the next section  formula (\ref{4.01}) with $u=2$ will appear in our covariance
formulas, and in this simple case it shows 
 that $M(t_1,t_2)>0$
 if and only if (\ref{4.1}) is satisfied with  $x_{t_2}=z-c_{t_2,t_1}t$
where $c_{t_2,t_1}=\be/\al$, $\gcd(\al,\be)=1$, $t\in\{0,1...,\al-1\}$
and $z\in\bbZ$. In particular when $c_{t_2,t_1}=1$ then $M(t_1,t_2)>0$
if and only if $x_{t_2}\in\bbZ$ and in this case $M(t_1,t_2)=1$.
\end{remark}

\begin{remark}\label{rem4.3}
It is possible to obtain convergence rate in the limit
(\ref{4.1}) (and thus also in the limits from Corollary \ref{cor4.1}).
When $c_{t_i,t_j}\in\bbQ$ for any $i$ and $j$ 
the proof of Theorem \ref{thm4.1} shows that the convergence
rate has the form $\frac CN$. In case that $c_{t_i,t_j}$ is not rational for some $i$ and 
$j$ the 
Erd\H{o}s-Tur\'an inequality (see Theorem 2.5 in \cite{NK}) yields
$\frac1N|B(x,c_{t_i,t_j},\ve)\cap[1,N]|\leq 2\ve+C_1\frac{\ln N}N$
for any $\ve>0$, where $C_1>0$ is an absolute constant.
Using this inequality in place of  (\ref {4.2})
and then optimizing the obtained upper bound (by taking $\ve=\ve_N$ of the form 
$\ve_N=\frac C{\sqrt N}$) yields in (\ref{4.1}) a convergence rate 
of the form $\frac C{\sqrt N}$. 

\end{remark}

\section{Limiting covariances}\label{sec5}\setcounter{equation}{0}

In this section we  prove the second equality from (\ref{2.19+}), 
assuming (\ref{2.4}), (\ref{2.5}) and Assumption \ref{ass2.1}. Then 
we show that $D^2=\lim_{N\to\infty}E\xi^2_N(1)$
exists and satisfies (\ref{2.20}). Relying on Section \ref{sec3} we deal 
from now on only with the case $\hat\ell=\ell$, i.e. 
we assume that all the limits in (\ref{2.6}) equal $\infty$.
We begin with the following observation. 
The moments conditions in Assumption \ref{ass2.1},
the definition of the functions $F_i$ from (\ref{2.16})-(\ref{2.16+})
 and (\ref{2.4})--(\ref{2.5}) yield that for any $n\in\bbN$ and $i=1,...,\ell$,
 \begin{equation}\label{5.0.0}
\|F_i\big(X(q_1(n)),...,X(q_i(n))\big)\|_2\leq 2K(1+\ell\gam_{2\iota})<\infty
 \end{equation}
 where $\gamma_{2\iota}$ is defined in (\ref{2.11}) and $K$ is from (\ref{2.4})-(\ref{2.5}).
 Next, the following lemma is  crucial. For any $1\leq i,j\leq \ell$ and $n,l\in\bbN$
 set
\begin{eqnarray*}
&b_{i,j}(n,l)=E\big[F_i\big(X(q_1(n)),...,X(q_i(n))\big)F_j\big(X(q_1(l)),...,
X(q_j(l))\big)\big].
\end{eqnarray*}
\begin{lemma}\label{lem5.1}
Suppose that all the limits in (\ref{2.6}) equal $\infty$. Then there exists a
nonincreasing function
$h(m)\geq0$, satisfying $\sum_{m=1}^\infty h(m)<\infty$,
such that for any $1\leq i,j\leq\ell$,
\[
\sup_{n,l:\,s_{i,j}(n,l)\geq m}|b_{i,j}(n,l)|\leq h(m)
\]
where $ \hat s_{i,j}(n,l)=\min(q_i(n)-q_j(l),n)\mbox{ and }s_{i,j}(n,l)
=\max(\hat s_{i,j}(n,l),\hat s_{j,i}(l,n))$.
 \end{lemma}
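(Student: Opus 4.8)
The plan is to estimate $b_{i,j}(n,l)$ by exploiting the projection property \eqref{2.17} of the functions $F_i$ together with the mixing and approximation coefficients from Assumption \ref{ass2.1}. The starting point is the observation that, because all limits in \eqref{2.6} now equal $\infty$, for each fixed $i$ the gaps $q_{s+1}(n)-q_s(n)$ between consecutive arguments of $F_i\big(X(q_1(n)),\dots,X(q_i(n))\big)$ tend to infinity, and likewise the gap $q_i(n)-q_{i-1}(n)\to\infty$; this is exactly the regime treated in \cite{KV}, so Lemma 4.3 and Corollary 3.6 of \cite{KV} (or their analogues) are available. The quantity $s_{i,j}(n,l)$ measures two things simultaneously: how large $n$ (hence all the $q_s(n)$) is, and how far $q_i(n)$ lies above $q_j(l)$, i.e. how well-separated the "top" variable $X(q_i(n))$ of the first factor is from the entire block of variables appearing in the second factor (or symmetrically with the roles of $(i,n)$ and $(j,l)$ swapped).

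First I would reduce to the case $\hat s_{i,j}(n,l)=\min(q_i(n)-q_j(l),n)\ge m$, i.e. $q_i(n)>q_j(l)$ and $n\ge m$, the other case being symmetric since $b_{i,j}(n,l)=b_{j,i}(l,n)$. In this case I would split $q_i(n)-q_j(l)=:g$ and condition on the $\sigma$-algebra generated by everything up to time roughly $q_j(l)+g/2$. Write $\Phi=F_i\big(X(q_1(n)),\dots,X(q_i(n))\big)$ and $\Psi=F_j\big(X(q_1(l)),\dots,X(q_j(l))\big)$. The second factor $\Psi$ is (approximately, up to $\beta_q$-errors controlled by \eqref{2.13}) measurable with respect to $\cF_{-\infty,\,q_j(l)+g/2}$, while the top variable $X(q_i(n))$ inside $\Phi$ sits at time $q_i(n)=q_j(l)+g$, i.e. a distance $\ge g/2$ above that cutoff. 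Using the key cancellation $E[F_i(\cdots)\mid y_1,\dots,y_{i-1}]=0$ from \eqref{2.17}, one shows that after replacing $X(q_i(n))$ by its conditional expectation given $\cF_{q_i(n)-g/4,\,q_i(n)+g/4}$ (again with $\beta_q$-error), the resulting expression has zero conditional expectation given $\cF_{-\infty,\,q_j(l)+g/2}$, up to mixing errors of order $\varpi_{q,p}(\lfloor g/4\rfloor)$. Combining the three error contributions — two approximation errors $\beta_q(\cdot)$ from \eqref{2.13} and one mixing error $\varpi_{q,p}(\cdot)$ from \eqref{2.12} — together with the uniform $L^2$ bound \eqref{5.0.0} and Hölder's inequality under the moment budget \eqref{2.14}, yields a bound $|b_{i,j}(n,l)|\le C\,\tilde h(g)$ where $\tilde h$ is summable.

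That handles large $g=q_i(n)-q_j(l)$; but $\hat s_{i,j}(n,l)\ge m$ also includes the possibility that $g$ is only moderately large while $n\ge m$ is large. Here I would instead exploit the internal gaps within $\Phi$ itself: since $q_i(n)-q_{i-1}(n)\to\infty$ as $n\to\infty$, for $n\ge m$ this gap is at least some $\rho(m)\to\infty$, and the same projection-plus-mixing argument as above — now separating $X(q_i(n))$ from $X(q_1(n)),\dots,X(q_{i-1}(n))$ and from all of $\Psi$ when $q_j(l)<q_i(n)$ — gives $|b_{i,j}(n,l)|\le C\big(\varpi_{q,p}(c\rho(m))+\beta_q(\cdots)^{\delta}+\dots\big)$. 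Taking $h(m)$ to be the maximum (over the finitely many pairs $i,j$) of the minimum of these two competing bounds, and then replacing $h$ by its decreasing majorant $m\mapsto\sup_{m'\ge m}h(m')$, gives a nonincreasing $h$; its summability follows from summability of $\sum\varpi_{q,p}(n)$ and $\sum\beta_q(r)^\delta$ together with the fact that $\rho(m)\ge c'm^{1/\deg q_i}$ grows at least like a positive power of $m$, so $\sum_m\varpi_{q,p}(c\rho(m))<\infty$ as well. The main obstacle I anticipate is bookkeeping the moment conditions carefully enough that all three error terms are simultaneously controlled under the single budget \eqref{2.14} — in particular choosing the conditioning radius (here $g/4$, $\rho(m)/4$, etc.) and the Hölder exponents so that the polynomial-growth factors $1+\sum|X(\cdot)|^\iota$ are absorbed by $\gamma_{2q\iota}<\infty$ and $\gamma_m<\infty$; this is precisely the type of estimate carried out in Section 3 of \cite{KV}, and I would lean on Corollary 3.6 there rather than redo it from scratch.
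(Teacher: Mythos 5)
Your overall strategy --- exploit the projection property (\ref{2.17}) for the top index, separate $X(q_i(n))$ from every other variable occurring in $b_{i,j}(n,l)$, and pay for that separation with the mixing coefficient $\varpi_{q,p}$ and the approximation coefficient $\beta_q$ --- is exactly the route the paper intends: it simply cites Lemma 4.2 of \cite{KV}, noting that the only new ingredient in the polynomial setting is the inequality $q_{i+1}(n)-q_i(n)\geq Cn$ for large $n$. Two points in your write-up need repair, however. First, your case split is spurious, and the first sub-argument does not stand on its own. Under $\hat s_{i,j}(n,l)\geq m$ you have \emph{simultaneously} $q_i(n)-q_j(l)\geq m$ and $n\geq m$ (it is a minimum), so there is no regime in which ``$g$ is only moderately large while $n\geq m$.'' More importantly, conditioning at $q_j(l)+g/2$ and invoking (\ref{2.17}) does not by itself give $E\big[\Phi\mid\cF_{-\infty,\,q_j(l)+g/2}\big]\approx0$: the identity (\ref{2.17}) integrates out the last coordinate with $y_1,\dots,y_{i-1}$ held \emph{fixed}, so one must also separate $X(q_i(n))$ from $X(q_1(n)),\dots,X(q_{i-1}(n))$, and these may well sit above your cutoff. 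Your ``second case'' argument is the correct one, and it is available in all cases precisely because both halves of the minimum are $\geq m$; the two cases collapse into a single estimate.

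Second, and this is the genuine gap: you assert $q_i(n)-q_{i-1}(n)\geq\rho(m)\geq c'm^{1/\deg q_i}$ for $n\geq m$, and conclude $\sum_m\varpi_{q,p}(c\rho(m))<\infty$ ``because $\rho$ grows at least like a positive power of $m$.'' That implication is false in general: if $\varpi_{q,p}(n)\asymp (n\log^2 n)^{-1}$ then $\sum_n\varpi_{q,p}(n)<\infty$ while $\sum_m\varpi_{q,p}(m^{1/2})=\infty$. With only a sublinear lower bound on the separation, the summability of $h$ --- which is the entire content of the lemma --- would not follow from (\ref{2.12})--(\ref{2.13}). The fix is exactly the paper's observation: since all limits in (\ref{2.6}) are infinite, each $q_i-q_{i-1}$ is a nonconstant polynomial with positive leading coefficient, hence $q_i(n)-q_{i-1}(n)\geq Cn\geq Cm$ for all large $n\geq m$; the relevant parameter is $n$ (which enters $\hat s_{i,j}$ directly), not $q_i(n)$, so the separation is bounded below \emph{linearly} in $m$ and $\sum_m\varpi_{q,p}([Cm])\leq(1+C^{-1})\sum_k\varpi_{q,p}(k)<\infty$. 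With that correction (and the analogous one for the $\beta_q$ terms) your estimate does produce a summable nonincreasing majorant $h$, in line with the argument of \cite{KV}.
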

This assertion was proved in Lemma 4.2 of \cite{KV},  relying on the mixing
rates (\ref{2.12})--(\ref{2.13}) and on  the inequality $q_{i+1}(n)-q_i(n)
\geq n$ for any $i$ and a sufficiently large $n$.
 In our polynomial  setup there exists $C>0$ such that $q_{i+1}(n)-q_i(n)
 \geq Cn$ for any $i$ and a sufficiently large $n$, and so
 the proof of Lemma \ref{lem5.1} for our setup proceeds in the same way as
 in \cite{KV}.

Next let $j$ be such that $\deg q_j=1$. 
 Write $q_s(y)=a_1^{(s)}y+a_0^{(s)}$, for any $1\leq s\leq j$.
Observe that our assumption  that $q_s(\bbN)\subset\bbN$ in this linear case 
implies that $a_1^{(s)},a_0^{(s)}\in\bbZ$ 
and set $\nu_{s,j}=\gcd(a_1^{(s)},a_1^{(j)})$. 
 Let  $i\leq j$ and let 
$(s_k,t_k),\,\,k=1,...,r$ be  the pairs $(s,t)$, $1\leq s
\leq i$, $1\leq t \leq j$ satisfying $c_{s,t}=c_{i,j}$, where 
$c_{s,t}=a_1^{(t)}/a_1^{(s)}$. For any $u=\nu_{i,j}k\in\nu_{i,j}\bbZ$, 
let the measure $m_{i,j}^{(u)}$ be defined  by
\begin{eqnarray*}
dm_{i,j}^{(u)}(x,y)=\prod_{k=1}^rd\mu_
{c_{i,s_k}u+a_0^{(s_k)}-a_0^{(t_k)}}(x_{s_k},y_{t_k})
\times\prod_{s:\forall k\,s\neq s_k}d\mu(x_s)\times\prod_{t:\forall k\,t\neq
t_k}d\mu(y_t)
\end{eqnarray*}
where $x=(x_1,...,x_i)$, $y=(y_1,...,y_j)$ and 
the measures $\mu$ and $\mu_l,\,l\in\bbZ$ are defined in (\ref{2.10}). 
The measure $m_{i,j}^{(u)}$ is well defined since the equality 
$c_{s,t}=c_{i,j}$ implies that $n_i=a_1^{(i)}/\nu_{i,j}$ divides $a_1^{(s)}$ 
since $n_i$ and $n_j=a_1^{(j)}/\nu_{i,j}$ are coprime, which makes 
$c_{i,s_k}u$ an integer when $u\in\nu_{i,j}\bbZ$.

\begin{proposition}\label{prop5.2} Let $1\leq j\leq\hat\ell$ be such that $\deg{q_j}=1$ 
and let $i\leq j$. Then, for any $\al,\be>0$ the limit
\[
\lim_{N\to\infty}E\big(\xi_{i,N}(\al)\xi_{j,N}(\beta)\big)=D_{i,j}(\al,\be)
\]
exists and equals $\min(\al,\be)D_{i,j}$, where
\[
D_{i,j}=\frac{a_1^{(1)}\nu_{i,j}}{a_1^{(i)}a_1^{(j)}}
\sum_{u=-\infty}^{\infty}L_{i,j}(u)
\]
and this series converges absolutely. Here 
$L_{i,j}(u)=\int F_i(x)F_j(y)dm^{(u)}_{i,j}(x,y)$ for any $u=\nu_{i,j}k\in\nu_{i,j}\bbZ$,
while $L_{i,j}(u)=0$  for any other $u$.
\end{proposition}
This result was proved in Proposition 4.1 of \cite{KV} in the case
when  linear $q_j$'s satisfy $q_j(n)=jn$, relying on the mixing
rates (\ref{2.12})-(\ref{2.13}).
The proof (below) of Proposition \ref{prop5.2} goes on in a similar to \cite{KV} way
but  requires additional combinatorial arguments. In particular we use the following simple observation.  
 For any distinct polynomials $p_1,...,p_m$ there exists an injective function (i.e. permutation)
 $\sig:\{1,...m\}\to\{1,...,m\}$ and $L>0$ such that 
\begin{equation}\label{5.0-}
p_{\sig(1)}(l)<p_{\sig(2)}(l)<...<p_{\sig(m)}(l) \text{ for any } l> L.
\end{equation}

\begin{proof} Let $i,j,\al$ and $\be$ be as in the statement of this proposition.
Consider the decomposition
\begin{eqnarray}\label{5.0.1}
\quad \quad E\big(\xi_{i,N}(\al)\xi_{j,N}(\beta)\big)=
\sum_{|u|\leq |q_i([c_{i,1}\alpha N])|+|q_j([c_{j,1}\beta N])|}B(N,u)
\end{eqnarray}
where 
\begin{equation*}
B(N,u)=B_{i,j}(N,u,\al,\be)=\frac1N\sum_
{\substack{1\leq n\leq c_{i,1}\alpha N \\ 1\leq l
\leq c_{j,1}\beta N\\q_i(n)-q_j(l)=u}}b_{i,j}(n,l).
\end{equation*}
Clearly,  if $\hat u=u+a_0^{(j)}-a_0^{(i)}\not\in\nu_{i,j}\bbZ$ then
there exists  no solution
for  the equation $q_i(n)-q_j(l)=u$, and so $B(N,u)=0$. Thus, 
by Lemma \ref{lem5.1} it is sufficient to show that for any $u\in\bbZ$
such that $\hat u\in\nu_{i,j}\bbZ$, 
\begin{equation}\label{5.0.2}
\lim_{N\to\infty}B(N,u)=
\frac{a_1^{(1)}\nu_{i,j}}{a_1^{(i)}a_1^{(j)}}L_{i,j}(\hat u).
\end{equation}
Suppose that
$\hat u=u+a_0^{(j)}-a_0^{(i)}\in\nu_{i,j}\bbZ$. It is clear that 
(\ref{5.0.2}) follows  by 
\begin{equation}\label{5.0.3}
\lim_{\substack{n,l\to\infty\\q_i(n)-q_j(l)=u}}b_{i,j}(n,l)
=L_{i,j}(\hat u)
\end{equation}
and 
\begin{eqnarray}\label{5.0.4}
&\,\,\,\,\,\,\lim_{N\to\infty}\frac1N|\{(n,l):q_i(n)-q_j(l)=u,\,\,
1\leq n\leq c_{i,1}\alpha N, 1\leq l\leq c_{j,1}\beta N \}|=\\
&\frac{a_1^{(1)}\nu_{i,j}}{a_1^{(i)}a_1^{(j)}}\min(\al,\be)\nonumber.
\end{eqnarray}
Beginning with the proof of  (\ref{5.0.3}),  let $n,l\in\bbN$ be  such that $q_i(n)-q_j(l)=u$,
which means that $n=n_u(l)=\frac{a_1^{(j)}l+\hat u}{a_1^{(i)}}$. Let $1\leq s\leq i$ 
and $1\leq t\leq j$ and observe that
\begin{eqnarray}\label{5.0.5}
 &q_s(n)-q_t(l)=a_1^{(s)}n+a_0^{(s)}-a_1^{(t)}l-a_0^{(t)}=
\frac{a_1^{(j)}a_1^{(s)}-a_1^{(i)}a_1^{(t)}}{a_1^{(i)}}l+
z_{s,t}
\end{eqnarray}
where $z_{s,t}=c_{i,s}\hat u+a_0^{(s)}-a_0^{(t)}$. Consider the set
\begin{eqnarray*}
\Gam_u(l)=\{q_s(n):\,1\leq s\leq i\}\cup\{q_t(l):\,1\leq t\leq j\}
\end{eqnarray*}
where $n=n_u(l)$.  
Consider the polynomials 
 $q_t(y)$ and  $q_s(\frac{a_1^{(j)}y+\hat u}{a_1^{(i)}})=q_s(n_u(y))$,
 where $1\leq s\leq i$ and $1\leq t\leq j$, and let $w$ the number of distinct polynomials among 
them. 
 Applying (\ref{5.0-}) shows that there exists $L>0$ such that 
\[
\Gam_u(l)=\{m_1(l)< m_2(l)<...<m_w(l)\}\text{ for any } l>L
\]
and  each $m_a$, $1\leq a\leq w$ is one of the above polynomials.
By (\ref{5.0.5}), the difference  
$q_s(n)-q_t(l)=z_{s,t}$ is  constant, if $(s,t)=(s_k,t_k)$ for some $1\leq k\leq r$, 
where $n=n_u(l)$. 
For any other couple $(s,t)$ it follows from (\ref{5.0.5}) that 
$\lim_{l\to\infty}|q_s(n_u(l))-q_t(l)|=\infty$. We conclude that for any 
$0<i<w$ either 
$\lim_{l\to\infty}m_{i+1}(l)-m_i(l)=\infty$ or $m_{i+1}(l)-m_i(l)$ is a constant and
(\ref{5.0.3}) follows  by Lemma 4.3 from \cite{KV}. 

We remark that in the  terminology of  Lemma 4.3 from \cite{KV} we used a 
partition of $\Gam_u(l)$ 
into "rigid blocks"  consisting of pairs $\{q_{s_k}(n),q_{t_k}(l)\}$
 $1\leq k\leq r$, and of singletons $\{q_s(n)\}$ and $\{q_t(l)\}$ 
where $t\in\{1,...,j\}\setminus\{t_1,...,t_r\}$ and 
$s\in\{1,...,i\}\setminus\{s_1,...,s_r\}$.

Establishing (\ref{5.0.4}), the cardinality of
 the set from (\ref{5.0.4}) equals the
cardinality of  the set
\begin{eqnarray*}
&S_1(N)=\big\{1\leq l\leq \frac{a_1^{(1)}N\be}{a_1^{(j)}}: \frac{a_1^{(j)}l+
\hat u}{a_1^{(i)}}\in\{1,2,....,[\frac{N\al a_1^{(1)}}{a_1^{(i)}}]\}\big\}.
\end{eqnarray*}
 The set $S_2$ of all
natural numbers $l$ such that $\frac{a_1^{(j)}l+\hat u}{a_1^{(i)}}\in\bbN$
is an arithmetic progression with common difference
$d=\min(\bbN\cap\frac{a_1^{(i)}}{a_1^{(j)}}\bbN)=\frac{a_1^{(i)}}{\nu_{i,j}}$
having asymptotic density $d^{-1}$. Observe now that
$
S_1(N)=S_2\cap \{1,2,...,\min([\frac{a_1^{(1)}N\beta}{a_1^{(j)}}],
[\frac{N\alpha a_1^{(1)}-\hat u}{a_1^{(j)}}])\}.
$
Hence,
\begin{eqnarray*}
&\lim_{N\to\infty}\frac1N|S_1(N)|=d^{-1}\frac{a_1^{(1)}\min(\al,\be)}{a_1^{(j)}}=
\frac{a_1^{(1)}\nu_{i,j}}{a_1^{(j)}a_1^{(i)}}\min(\al,\be)
\end{eqnarray*}
and (\ref{5.0.4}) follows.
\end{proof}

Before formulating the limiting covariances results for nonlinear indexes, we shall need the 
following. Let $1\leq i,j\leq\ell$. Then $q_i\equiv q_j$  if and 
only if  $c_{i,j}\in\bbQ$  and there exists $x_{i,j}\in\bbQ$ such that
\begin{equation}\label{5.0}
q_i(y)=q_j(c_{j,i}(y-x_{i,j}))+q_i(x_{i,j})-q_j(0) \,\,\text{ for any }\,
y\in\bbR.
\end{equation}
Here  $q_i\equiv q_j$ means that $q_i$ and $q_j$ are equivalent with respect to 
the equivalence relation defined above Theorem \ref{thm2.2} and 
 $c_{i,j}$ is defined below (\ref{2.18}).

Next, suppose that $q_i\equiv q_j$ and write
 $c_{i,j}=a(i,j)/b(i,j)$, where  $a(i,j),b(i,j)\in\bbN$ and $\gcd(a(i,j),b(i,j))=1$.
Let $x_{i,j}\in\bbQ$ satisfying
(\ref{5.0}) and 
let $(s_k,t_k),\,\,k=1,...,r$ be the pairs $(s,t)$,
$1\leq s\leq i$,  $1\leq t \leq j$ satisfying  $\deg {q_s}=\deg{q_t}$,
 $c_{s,t}=c_{i,j}$ and that
\begin{eqnarray}\label{5.0+}
&q_s(y)=q_t(c_{j,i}(y-x_{i,j}))+q_s(x_{i,j})-q_t(0)\,\,
\text{ for any }\, y\in\bbR.
\end{eqnarray}
In case that $M(a(i,j),b(i,j),x_{i,j})$ (defined  by
(\ref{4.01})) is positive, 
let $m_{i,j}$ be  the measure  defined  by 
\begin{equation}\label{5.1}
dm_{i,j}(x,y)=\prod_{k=1}^r
d\mu_{q_{t_k}(0)-q_{s_k}(x_{i,j})}(x_{s_k},y_{t_k})
\times\prod_{s:\, \forall k\,s\neq s_k}d\mu(x_s)\times
\prod_{t:\,\forall k\,t\neq t_k}d\mu(y_t)
\end{equation}
where the measures $\mu$ and $\mu_l,\,l\in\bbZ$ are defined in
(\ref{2.10}), $x=(x_1,...,x_i)$ and  $y=(y_1,...,y_j)$.
As explained in Remark \ref{rem4.2}, $\,\,M(a(i,j),b(i,j),x_{i,j})>0$ if and only if
$x_{i,j}=z-c_{i,j}l$ for some $z\in\bbZ$ and $l\in\{0,1...,a(i,j)-1\}$. Plugging in 
$y=z+(|z|+1)a(i,j)$ in (\ref{5.0+}) shows that
$q_{t_k}(0)-q_{s_k}(x_{i,j})$ is an integer, and  so the  measure $m_{i,j}$ is well defined.

\begin{proposition}\label{prop5.3} (i) Let $1\leq j\leq\hat\ell$ be such that
$\deg q_j>1$ and let $1\leq i\leq j$. Then, for any $\al,\be>0$  the limit
\begin{equation}\label{5.1*}
\lim_{N\to\infty}E(\xi_{i,N}(\al)\xi_{j,N}(\beta))=D_{i,j}(\al,\be)
\end{equation}
exists and equals $\min(\al,\be)D_{i,j}$.
Moreover, $D_{i,j}=0$  if  $q_i$ and  $q_j$ are not equivalent 
with respect to the equivalence
relation defined above Theorem \ref{thm2.2}.
 In particular $D_{i,j}=0$ if $\deg q_i\not=\deg q_j$.

(ii) Suppose that $i<j$ and that $q_i$ and $q_j$ are equivalent.  Let
 $1\leq k\leq v$ be such that $i_{k-1}<i,j\leq i_k$ and 
 $\deg{q_j}=\deg{q_i}=m_k>1$, where $i_0,...,i_v$ are defined before (\ref{2.18}). 
Set $M_{i,j}=M(a(i,j),b(i,j),x_{i,j})$. 
 Then
\begin{eqnarray*}
&D_{i,j}=\frac{c_{j,i_{k-1}+1}M_{i,j}}{a(i,j)}\int F_i(x)F_j(y) dm_{i,j}(x,y)
\end{eqnarray*}
provided that $M_{i,j}>0$, and otherwise $D_{i,j}=0$.

(iii) Finally,
\begin{eqnarray}\label{5.1+}
&D_{j,j}=c_{j,i_{k-1}+1}\int  F_j^2(z_1,...,z_j)
d\mu(z_1) d\mu(z_2)\cdot\cdot\cdot d\mu(z_j).
\end{eqnarray}
\end{proposition}

\begin{proof}
Let $1\leq i\leq j\leq\ell$ be such that $\deg q_j>1$ and let  $\al,\be>0$. 
The proof that $D_{i,j}=0$  when
$\deg{q_i}\neq\deg{q_j}$ is a direct consequence of Lemma \ref{lem5.1}
and it proceeds as the proof of Proposition 4.5
in \cite{KV}. 
Suppose that $\deg q_i=\deg q_j=m$ and let $k$ be such that  $i_{k-1}<i,j\leq i_k$, 
 which means that  $m=m_k$. Set 
$c=c_{i,j}=(\frac{a_m^{(j)}}{a_m^{(i)}})^{\frac1m}\geq1$. 

Let $n,l\in\bbN$ and write $n=cl+x$, where 
$x$ is considered as a parameter.
We begin our proof with estimating the quantities $s_{i,j}(n,l)$ defined in Lemma 
(\ref{lem5.1}). 
By  Taylor's expansion around $0$ of
$q_i(cy+x)-q_j(y)$ as a function of $y$, with $x$ considered as
a parameter, we obtain that
\begin{equation}\label{5.3}
q_i(n)-q_j(l)=\sum_{u=0}^{m-1}g_u(x)l^u
\end{equation}
where
\begin{eqnarray}\label{5.3+}
g_u(x)=\frac1{u!}(c^uq_i^{(u)}(x)-q_j^{(u)}(0))
\end{eqnarray}
and  $f^{(u)}$ is the $u-$th derivative of a function $f$. 
Since 
$l=c^{-1}n-c^{-1}x$, similarly to (\ref{5.3}),
we have,
\begin{equation}\label{5.4}
q_j(l)-q_i(n)=\sum_{u=0}^{m-1}f_u(x)n^u
\end{equation}
 where
\[
f_u(x)=\frac1{u!}(c^{-u}q_j^{(u)}(-c^{-1}x)-q_i^{(u)}(0)).
\]
Since $\lim_{x\to\infty}g_u(x)=\infty$ for any $u=0,1,...,m-1$, 
 there exist $M_1>1$ and $C_1,C_2,C_3>0$ such that for any $x>M_1$ and
$n,l\in\bbN$ satisfying $n=cl+x$,
\[
q_i(n)-q_j(l)\geq C_1\max_{m>u\geq0}q_i^{(u)}(x)l^u\geq
C_2\max_{m>u\geq0} x^{m-u}l^u\geq C_3\max(n,l+x).
\]
Since
 $ n\geq l+x\geq l$ we conclude that there exists a constant $C_4>0$ such that
\begin{equation}\label{5.5}
\hat s_{i,j}(n,l)\geq C_4\max(n,l+x)\geq C_4\max(n,l).
\end{equation}
Similarly, by letting $x\to-\infty$, there exist $M_2>1$ and $C_5>0$ such that
 for any $x<-M_2$ and $n,l\in\bbN$ satisfying $n=cl+x$,
\begin{equation}\label{5.6}
\hat s_{j,i}(l,n)\geq c^{-1}C_5\max(l,n+|x|)\geq c^{-1}C_5\max(n,l).
\end{equation}

Next, for any  $N\in\bbN$ set
\begin{equation}\label{5.2}
J_1(N)=\{1,...,[N\al_i]\},\, J_2(N)=\{1,...,[N\be_j]\}\mbox{ and }
I(N)=J_1(N)-cJ_2(N)
\end{equation}
where $\al_i=\al c_{i,i_{k-1}+1}$ and $\be_j=\be c_{j,i_{k-1}+1}$.
Suppose that $c\notin\bbQ$. Then $q_i$ and $q_j$ are not equivalent and we are interested 
in proving that $D_{i,j}(\al,\be)=0$.
 Let $y_1$ be the root of the linear polynomial $g_{m-1}$. For any $0<\ve<\frac12$ set
\[
A_\ve=\{l\in\bbN:\,|n-cl-y_1|<\ve\mbox{ for some }n\in\bbN\}.
\]
 Then by (\ref{4.2}),
\begin{equation}\label{5.7}
\limsup_{N\to\infty}\frac1N|A_\ve\cap[1,N]|\leq 2\ve.
\end{equation} 
For any $l\in A_\ve$,  let $n=n_1(l)$
be the only positive  integer satisfying $|n-cl-y_1|<\ve$. 
Consider now the following decomposition
\begin{equation}\label{5.7+}
\frac1NE(\xi_{i,N}(\al)\xi_{j,N}(\beta))=\frac1N\sum_{l\in A_\ve\cap J_2(N): n_1(l)\in J_1(N)}
b_{i,j}(n_1(l),l)+
\frac1N\sum_{(n,l)\in B_{N,\ve}}b_{i,j}(n,l)
\end{equation}
where $B_{N,\ve}=(J_1(N)\times J_2(N))\cap\{(n,l): |n-cl-y_1|\geq\ve\}$.
Estimating the first sum of the above right hand side,
by (\ref{5.7}) and  (\ref{5.0.0} we have
\begin{equation}\label{5.8}
\limsup_N\frac1N\sum_{l\in A_\ve\cap J_2(N): n_1(l)\in J_1(N)}|b_{i,j}
(n_1(l),l)|\leq2\ve\be_i\cdot K^2\big(1+\ell\gam_{2q\iota}\big)^2=R\ve.
\end{equation}
Estimating the second sum of  the right side of (\ref{5.7+}), 
 let $n,l\in\bbN$  be such that $|n-cl-y_1|\geq\ve$, 
write  $ n=cl+x$ and assume that
 $|x|\leq M=\max(M_1,M_2)$. Then $|x-y_1|\geq\ve$.
By continuity of $g_s,\,s=0,...,m-1$
there exist $K_{1,\ve},K_2>0$ such that for any such $x$,
\[
K_2\geq\max_{0\leq s \leq m-1}|g_s(x)|\mbox{ and }|g_{m-1}(x)
|\geq K_{1,\ve}.
\]
Hence, if $\max(n,l)$ is sufficiently large  and $|x|\leq M$, then by
(\ref{5.3}),
\[
|q_i(n)-q_j(l)|\geq C_\ve\max(n, l)
\]
where $C_\ve>0$ is a constant which depends only on $\ve$. Thus, for such $n,l$ and $x$,
\begin{equation}\label{5.9}
s_{i,j}(n,l)\geq R_\ve\max(n,l)
\end{equation}
where $R_\ve>0$ is a constant which depends only on $\ve$. By (\ref{5.5}),
(\ref{5.6}), (\ref{5.9}) and Lemma \ref{lem5.1} there exist   $K_\ve, R_0>0$ 
such that
\begin{eqnarray}\label{5.9.1}
&\frac1N\sum_{(n,l)\in B_{N,\ve}}|b_{i,j}(n,l)|\leq \frac{R_0}N+
\frac1N\sum_{(n,l)\in B_{N,\ve}}h(K_\ve\max(n,l))\leq\\
&\frac{R_0}N+\frac1N
\sum_{n=1}^{[N\al_i]}nh(K_\ve n)+\frac 1N\sum_{l=1}^{[N\be_i]}lh(K_\ve l)\nonumber
\end{eqnarray}
where $h(t)=h([t])$. Observe that for any
nonnegative monotone decreasing sequence satisfying $\sum_{n=1}^\infty a_n
<\infty$,
\[
\limsup_{n\to\infty}(n a_n)\leq2\limsup_{n\to\infty}(\sum_{n\geq k>
\frac n2}a_k)=0.
\]
Thus, $\lim_{s\to\infty}sh(K_\ve s)=0$, and so
$
\lim_{N\to\infty}N^{-1}\sum_{(n,l)\in B_{N,\ve}}|b_{i,j}(n,l)|=0
$
by (\ref{5.9.1}). 
Finally, letting $\ve\to0$ in (\ref{5.8}) we obtain $D_{i,j}(\al,\be)=0$. 

Next, suppose that $c\in\bbQ$. Let
$n,l\in\bbN$ and $x\in\bbQ$ be such that  $n=cl+x$ and $|x|\leq M=\max(M_1,
M_2)$. Suppose that there exists $0< u<m$ such that $g_u(x)\not=0$. 
Observe that
\begin{equation}\label{5.9+}
|[-M,M]\cap(\bbN-c\bbN)|<\infty
\end{equation}
since $c$ is rational, which means that $x$ can take only a finite number of values.
Hence, by (\ref{5.3}) there exists $C_6>0$ such
that whenever $\max(n,l)$  is sufficiently large,
\[
|q_i(n)-q_j(l)|\geq C_6\max(l+|x|,\, n+|x|)
\]
 and therefore there exists $C_7>0$ such that  $\max(n,l)$  is sufficiently large,
\begin{equation}\label{5.10}
 s_{i,j}(n,l)\geq C_7\max(l+|x|,\, n+|x|).
\end{equation}
 Set
\begin{eqnarray*}
A(N)=I(N)\bigcap\{x|\, g_u(x)\neq0\mbox{ for some } 0< u<m\},
\\
\forall x\in\bbQ,\, B(N,x)=(J_1(N)\times J_2(N))\cap\{(n,l): n=cl+x\},
\end{eqnarray*}
and 
\begin{eqnarray*}
C(N)=(J_1(N)\times J_2(N))\cap\{(n,l): g_u(n-cl)=0,\forall\,\,\, 0<u<m\}
\end{eqnarray*} 
where $I(N),J_1(N)$ and $J_2(N)$ are defined in (\ref{5.2}).
Consider the decomposition 
\begin{equation}\label{5.10+}
\frac1NE(\xi_{i,N}(\al)\xi_{j,N}(\beta))=
\frac1N\sum_{x\in A(N)}\sum_{(n,l)\in B(N,x)}b_{i,j}(n,l)+
\frac1N\sum_{(n,l)\in C(N)}b_{i,j}(n,l).
\end{equation}
By (\ref{5.5}), (\ref{5.6}), (\ref{5.10}) and Lemma \ref{lem5.1}, there exist $C,C_8>0$ such that 
for any $N\in\bbN$,
\begin{equation}\label{5.11}
\frac1N\sum_{x\in A(N)}\sum_{(n,l)\in B(N,x)}|b_{i,j}(n,l)|\leq \frac CN+
\frac1N\sum_{x\in A(N)}\sum_{s=1}^\infty h(C_8(|x|+s)).
\end{equation}
 Observe that $\sum_{s=1}^
\infty h(C_8(|x|+s))$ converges
to $0$ as $|x|\to\infty$ since $\sum_{s=1}^\infty h(s)<\infty$.
The fact that $c\in\bbQ$ implies that $|A(N)|\leq |I(N)|<\hat CN$ for some $\hat C>0$
and observe that $\bbN-c\bbN=\{zb\}_{z\in\bbZ}$, 
where $b=\gcd(p,q)/q$ if $c=p/q$. Therefore, 
\begin{equation}\label{5.12}
\lim_{N\to\infty}\frac1N\sum_{x\in A(N)}\sum_{s=1}^{\infty}h(C_8(|x|+s))=0.
\end{equation}

It remains to show that  the second term on the right hand side of (\ref{5.10+})
converges to $\min(\al,\be)D_{i,j}$. 
 Observe that
(\ref{5.0}) is satisfied if and only if  $g_u(x_{i,j})=0$ for any $0<u<m$. 
On the one hand, 
suppose that $q_i$ and $q_j$ are not equivalent. Then
(\ref{5.0}) is not satisfied, for any $x_{i,j}\in\bbQ$, since $c\in\bbQ$. 
It follows that that for any $n,l$ there exists $0<u<m$ such that $g_u(n-cl)\neq0$. 
This means that $C(N)=\emptyset$,
and so $D_{i,j}(\al,\be)=0$ by (\ref{5.10+})-(\ref{5.12}), as
 claimed  in the statement of Proposition \ref{prop5.3}. 
On the other hand, suppose that  $q_i$ and $q_j$ are equivalent. Then (\ref{5.0})
is satisfied with some $x_{i,j}\in\bbQ$ and  $g_u(x_{i,j})=0$ for any $0<u<m$. 
In view of (\ref{5.10+})-(\ref{5.12}), establishing (\ref{5.1*})
reduces to computing the limit of $N^{-1}\sum_{(n,l)\in C(N)}b_{i,j}(n,l)$. 
We first claim that 
\begin{eqnarray}\label{5.12+}
&C(N)=B(N,x_{i,j}).
\end{eqnarray}
 Indeed, $g_{m-1}$
is linear and $x_{i,j}$ is its unique root, implying that  $n-cl=x_{i,j}$ for any $(n,l)\in C(N)$. 
The opposite inclusion is clear since $g_u(x_{i,j})=0$ for any $u=1,2,...,m-1$.  
It follows that the sets $\{b_{i,j}(n,l): (n,l)\in C(N)\}$ and 
$\{b_{i,j}(cl+x_{i,j},l): cl+x_{i,j}\in\bbN, 1\leq l\leq Nc_{j,i_{k-1}+1}\min(\al,\be)\}$
differ by (at most) finite number of elements which do not depend on $N$. Thus, 
\begin{equation}\label{5.12++}
\lim_{N\to\infty}\frac1N\sum_{(n,l)\in C(N)}b_{i,j}(n,l)=
\lim_{N\to\infty}\frac1N\sum_{l=1}^{Nc_{j,i_{k-1}+1}\min(\al,\be)}b_{i,j}(cl+x_{i,j},l)
\bbI_{\{cl+x_{i,j}\in\bbN\}}
\end{equation}
where we used that $|b_{i,j}(n,l)|$ is bounded in $n,l\in\bbN$, by Lemma \ref{lem5.1}. 
Equality (\ref{5.12++}) holds true, of course, only when the right hand side limit exists,
and the following arguments' purpose is proving its existence and  computing  it.
First, suppose that $i=j$. Then 
 $c=1$ and $x_{i,j}=0$ and the right hand side of   (\ref{5.12++}) becomes
\begin{equation}\label{5.15}
\lim_{N\to\infty}\frac1N\sum_{l=1}^{N\min(\al,\be)c_{j,i_{k-1}+1}}b_{j,j}(l,l).
\end{equation}
By   Lemma 4.3 from \cite{KV}  we have
$\lim_{l\to\infty}b_{j,j}(l,l)=\int F_j^2(z_1,...z_j)
d\mu(z_1)\cdot\cdot\cdot d\mu(z_j)$ 
and so  this limit exists and equals
$\min(\al,\be)D_{j,j}$, where $D_{j,j}$ is defined by (\ref{5.1+}). 

Suppose now that $i<j$ and recall  that (\ref{5.0}) is satisfied. Observe that
the asymptotic density
of the set of  $l$'s such that $cl+x_{i,j}\in\bbN$ equals $M_{i,j}/a(i,j)$,
where $M_{i,j}=M(a(i,j),b(i,j),x_{i,j})$ is defined by (\ref{4.01}).
Let  $l\in\bbN$ and consider the set 
\[
\Gam_l=\{q_s(cl+x_{i,j}):1\leq s\leq i\}\cup\{q_t(l):1\leq t\leq j\}
\]
of indexes  appearing in the summands of the right hand side of (\ref{5.12++}).
Similarly to the proof of Proposition \ref{prop5.2}, 
consider the polynomials $q_s(cy+x_{i,j})$ and $q_t(y)$, where $1\leq s\leq i$, 
$1\leq t\leq j$,  
and let $d$ be the number of distinct polynomials among them. Then
by (\ref{5.0-}),  there exists $L>0$ such that 
\begin{eqnarray*}
&\Gam(l)=\{m_1(l)<m_2(l)<...<m_d(l)\}\text{ for any } l>L
\end{eqnarray*}
and each $m_a$, $1\leq a\leq d$ is one of the above polynomials.  Observe that 
the polynomials $q_s(cy+x_{i,j})$ and $q_t(y)$ differ by a constant 
if and only if $(s,t)=(s_k,t_k)$ for some $k=1,2,...,r$, where  $(s_k,t_k)$ are defined
above (\ref{5.0+}) and we used that $c_{i,j}=c^{-1}$, (\ref{5.0}) and 
the change of variables $y\to c^{-1}(y-x_{i,j})$. On the other hand, 
if these polynomials do not differ by a constant then 
$\lim_{y\to\infty}|q_s(cy+x_{i,j})-q_t(y)|=\infty$. We conclude that for any 
$0<i<d$ either 
$\lim_{l\to\infty}m_{i+1}(l)-m_i(l)=\infty$ or $m_{i+1}(l)-m_i(l)$ is a constant. Thus, 
 Lemma 4.3 from \cite{KV} implies that
\begin{equation}
\lim_{l\to\infty,\,\,cl+x_{i,j}\in\bbN}b_{i,j}(cl+x_{i,j},l)=\int F_i(x)F_j(y)dm_{i,j}(x,y)
\end{equation}
where  $m_{i,j}$ is defined by (\ref{5.1}). We conclude that
 the right hand side of  (\ref{5.12++}) converges to
\[
D_{i,j}(\al,\be)=\frac{\min(\al,\be)c_{j,i_{k-1}+1}M_{i,j}}
{a(i,j)}\int F_i(x)F_j(y)dm_{i,j}(x,y)
\]
 and (\ref{5.1*}) follows by
(\ref{5.10+}), (\ref{5.11})  (\ref{5.12}) and (\ref{5.12++}).

We remark that in the terminology of Lemma 4.3 from \cite{KV} 
we used a partition into "rigid blocks" consisting of  pairs of the form  
$\{q_{s_k}(cl+x_{i,j}),q_{t_k}(l)\}$, $1\leq k\leq r$ and of singletons $\{q_s(cl+x_{i,j})\}$
and $\{q_t(l)\}$ where $t\in\{1,...,j\}\setminus\{t_1,...,t_r\}$ and 
$s\in\{1,...,i\}\setminus\{s_1,...,s_r\}$. 
\end{proof}

We conclude this section by showing that
$D^2=\lim_{N\to\infty}E\xi^2_N(1)$
exists, as claimed in Theorem \ref{thm2.2}. 
Again, relying on Section \ref{sec3}, it is sufficient to
prove this only in  case that $\hat\ell=\ell$. In this case
$r_i=i$, $i=1,...,\ell$, and by (\ref{2.19}),
\[
E\xi^2_N(1)=\sum_{A_1,A_2\in\mathcal A}E[\xi_N^{(A_1)}\xi_N^{(A_2)}]. 
\]
By Proposition \ref{prop5.3}, the  summands above converge to $0$
as $N\to\infty$ if $A_1\not=A_2$. 
On the other hand, when $A_1=A_2=A$,
Propositions \ref{prop5.2} and \ref{prop5.3} imply that
\begin{equation}\label{5.13}
\lim_{N\to\infty}E[\big(\xi_N^{(A)}\big)^2]=
\sum_{s:q_{r_s}\in A}c_{i_A,r_i}D_{i,i}
+2\sum_{i<j:q_{r_i},q_{r_j}\in A}c_{i_A,r_i}D_{i,j}
\end{equation}
and so $D^2$ exists and satisfies (\ref{2.20}). 
\qed

\section {Central limit theorem}\label{sec6}\setcounter{equation}{0}

In this section we complete the proof of Theorem \ref{thm2.2}. 
We assume without  loss of generality that $\hat\ell=\ell$, which is possible in view of 
of Section \ref{sec3}.  In this case
$r_i=i,\,i=0,1,...,\ell$ and
\begin{equation*}
\xi_{i,N}(t)=\frac1{\sqrt N}\sum_{n=1}^{[Ntc_{i,i_{k-1}+1}]}
F_i\big(X(q_1(n)),...,X(q_i(n))\big).
\end{equation*}
There exists $R\in\bbN$ such that the functions
 $q_i,q_i^{-1}$ and $q_i^{-1}\circ q_j$,
 $1\leq \,i,j\leq\ell$
 are well defined on $[R,\infty)$ and are strictly increasing and positive there.
Thus, by beginning the summation in the definition of $\xi_{i,N}$ from $R$ we can assume 
without  loss of generality that  these functions are well defined, positive and 
strictly increasing on $[0,\infty)$, which will simplify some of our arguments.

\subsection{Notations and approximations similar to \cite{HK} and \cite{KV}}\label{Sec6.1}

We introduce first the following notations from \cite{KV},
\begin{eqnarray}\label{6.1}
&F_{i,n,r}(x_1,...,x_{i-1},\om)=E(F_i(x_1,...,x_{i-1},X(n))|\cF_{n-r,n+r}),\\
&X_r(n)=E(X(n)|\cF_{n-r,n+r}),\nonumber \\
&Y_{i,q_i(n)}=F_i(X(q_1(n)),...,X(q_i(n)))\,\,\,\mbox{and}\,\,\,Y_{i,m}=0
\mbox{ if }m\notin\{q_i(l)\}_{l=1}^{\infty},\nonumber \\
&Y_{i,q_i(n),r}=F_{i,q_i(n),r}(X_r(q_1(n)),...,X_r(q_{i-1}(n)),\om)
\mbox{ and }\nonumber\\
&\,\,\,\,Y_{i,m,r}=0\,\mbox{ if
}\,m\notin\{q_i(l)\}_{l=1}^\infty\nonumber
\end{eqnarray}
where $1\leq i\leq \ell$ and $r$ is a nonnegative integer. 
We shall use the following estimate, as well.  There exists a constant $c_0>0$ such that  
\begin{equation}\label{6.2}
\|Y_{i,n,r}-Y_{i,n}\|_2\leq c_0\big(\beta_q(r)\big)^\del\,\,\,\mbox{for any}\,\,\,
r,n\in\bbN\,\, \mbox{and}\,\,\, 1\leq i\leq\ell.
\end{equation}
(\ref{6.2}) follows
 by Theorem 3.4 in \cite{KV}, relying on
 Assumption \ref{ass2.1} and  on (\ref{2.4})-(\ref{2.5}), and its proof goes in a similar 
way to the proof of Lemma 3.12 from the early preprint version arXiv:1012.2223v2
 of \cite{KV}. Notice that (\ref{6.2}) and (\ref{2.13}) imply that 
$Y_{i,n,r}=\lim_{r\to\infty}Y_{i,n,r}$ where the limit is taken in the $L^2(\Om,P)$
sense, and it follows that 
\begin{equation}\label{6.2+}
Y_{i,n}=Y_{i,n,2^u}+\sum_{m=u+1}^\infty Y_{i,n,2^m}- Y_{i,n,2^{m-1}}
\end{equation}
for any $1\leq i\leq\ell$, $n\in\bbN$ and $u\in\bbN$, where we used (\ref{2.13}).
We shall need also the following estimate. For any $T>0$ there exists a constant 
$C_T>0$ such that 
\begin{equation}\label{6.2++}
\sum_{r\geq 0}\sup_{N\geq1}\max_{1\leq i\leq\ell}\big\|\sup_{0\leq t\leq T}
|\frac1{\sqrt N}\sum_{1\leq n\leq Nt} Y_{i,q_i(n),2^r}-Y_{i,q_i(n),2^{r-1}}|\big\|_2
\leq C_T,
\end{equation}
where $Y_{i,q_i(n),2^{-1}}:=0$. The proof of   (\ref{6.2++})
goes exactly as the proof of Proposition 5.9 from \cite{KV}.
Next for any $u\in\bbN$,  $k=1,...,v$ and $i_{k-1}<i\leq i_{k}$ set
\begin{eqnarray*}
\xi_{i,N}^{(u)}(t)=\frac1{\sqrt N}\sum_{n=1}^{[Ntc_{i,i_{k-1}+1}]}
Y_{i,q_i(n),2^u}.
\end{eqnarray*}
Then  by (\ref{6.2+}) and
(\ref{6.2++}), for any $T>0$ and  $i=1,...,\ell$,
\begin{eqnarray}\label{6.2+++}
&\lim_{u\to\infty}\sup_{N\in\bbN}\big\|\sup_{0\leq t\leq T}|\xi_{i,N}(t)-
\xi_{i,N}^{(u)}(t)|\big\|_2=0.
\end{eqnarray}

The last estimate we need goes as follows. There exists a constant $C>0$ such that for any strictly
increasing sequence of natural numbers $\{n_s\}_{s=1}^\infty$, $1\leq i\leq\ell$ and
$u,m,l\in\bbN$, 
\begin{equation}\label{Lem5.2}
\big\|\sum_{s=l}^{l+m-1}Y_{i,q_i(n_s),2^{u-1}}\big\|_2, 
\big\|\sum_{s=l}^{l+m-1}Y_{i,q_i(n_s)}\big\|_2\leq C\sqrt m.
\end{equation}
This result was proved  in Lemma 5.2 from  \cite{HK}  (with $b=2$) in case that 
$q_i(n)=in$, $i=1,2,...,\ell$.  The proof from \cite{HK} relies
on the mixing rates (\ref{2.12})-(\ref{2.13}) and on  Corollary 3.6  from \cite{KV}
together with the inequality
$q_{i+1}(n)-q_i(n)\geq n$ for any $i$ and sufficiently large $n$. Assuming that $\ell=\hat\ell$,
 there exists $C_1>0$ such that 
$q_{i+1}(n)-q_{i}(n)\geq C_1n$ for any $i$ and  sufficiently large $n$,
and so the proof of  (\ref{Lem5.2}) proceeds in our setup in the
same way.

\subsection{Proof of Theorem \ref{2.2}}\label{subsec6.1}

As pointed out in Section \ref{sec1}, we are going to adapt  the martingale approximation
approach from \cite{KV} to our  situation. 
We begin with showing that the process $\big(\xi_{i,N}(\cdot)\big)_{i=1}^\ell$ 
is tight when considered as $D\big([0,\infty);\bbR^{\ell}\big)$ (the $\ell$- dimensional Skorokhod space)
valued random variable. 
The arguments from and below either
 Proposition \ref{prop6.1} or Proposition 5.8 from \cite{KV} 
show that when $i$ is fixed Theorem \ref{Thm5.1} is applicable with appropriate
subsequences of $\xi_{i,N}^{(u)}(\cdot)$, and that by letting $u\to\infty$,
 each  one dimensional  component of  $\big(\xi_{i,N}(\cdot)\big)_{i=1}^\ell$ weakly converges
as $N\to\infty$. 
In particular, each one of them forms a tight sequence of  
$D\big([0,\infty);\bbR\big)$ valued random variables.
The (non random) lattice structure of the discontinuity points of the process
$\big(\xi_{i,N}(\cdot)\big)_{i=1}^\ell$  together with  Theorem 3.21 from
Chapter VI in \cite{JS} 
imply that tightness of the $\ell$ dimensional  process
$\big(\xi_{i,N}(\cdot)\big)_{i=1}^\ell$ 
follows from tightness of its one dimensional components.
We conclude that  weak converge of $\big(\xi_{i,N}(\cdot)\big)_{i=1}^\ell$ 
 follows from weak  convergence of its finite dimensional distributions.

Next,  let $1\leq k\leq v$,  $i_{k-1}<i\leq i_k$, $n\in\bbN$ and $r>0$ and  set
\begin{eqnarray}\label{Defs}
&A_i=q_{i_{k-1}+1}^{-1}(q_i(\bbN)),\,\,
A^{(k)}=\bigcup_{i=i_{k-1}+1}^{i_k}A_i,\\
& b_n^{(k)}=q_{i_{k-1}+1}(z_n^{(k)}),\,\,\,
\nu_i(r)=|A_i\cap(0,r]|\,\,\text{and }\,
\nu^{(k)}(r)=|A^{(k)}\cap(0,r]|\nonumber
\end{eqnarray}
where $|\Gam|$ denotes the cardinality of  a finite set $\Gam$. 
Observe that $b_n^{(k)}\in\bbN$ and that $\lim_{r\to\infty}\nu_i(r)/r=
c_{i,i_{k-1}+1}>0$,  
since $\lim_{y\to\infty} q_{i_{k-1}+1}^{-1}(q_i(y))/y=c_{i_{k-1}+1,i}$. 
By   Corollary \ref{cor4.1}, the set  $A^{(k)}$ has the form $A^{(k)}=\{z_1^{(k)}<z^{(k)}_2<...\}$
and there exists $c^{(k)}\geq 1$ such that
\begin{eqnarray}\label{6.4}
&\lim_{r\to\infty}\frac{\nu^{(k)}}r=c^{(k)}\,\,\text{ and }\,\,
\lim_{m\to\infty}\frac {z_m^{(k)}}m=\frac1{c^{(k)}}.
\end{eqnarray}
We note also that $\nu_i(r)=\max\{m: q_{i_{k-1}+1}^{-1}(q_i(m))\leq r\}$ and 
$\nu^{(k)}(r)=\max\{m: z_m^{(k)}\leq r\}$, 
for any sufficiently large $r$.

The next step of the proof is to approximate
the process $\big(\xi_{i,N}(t)\big)_{i= i_{k-1}+1}^{i_k}$ by 
the process $\hat\Psi_{k,N}(tc^{(k)})$ defined below, for which Proposition 
\ref{Cor5.7} is applicable. 
First, (\ref{Lem5.2}) yields that for any
 $n,u\in\bbN$, $t>0$, $1\leq k\leq v$ and $i_{k-1}< i\leq i_k$, 
\begin{eqnarray}\label{6.6}
&\,\,\,\,\,\,\,\|\xi_{i,N}(t)-\frac1{\sqrt N}\sum_{n=1}^{\nu_i(Nt)}Y_{i,q_i(n)}\|_2\,,\,\,
\|\xi_{i,N}^{(u)}(t)-\frac1{\sqrt N}\sum_{n=1}^{\nu_i(Nt)}Y_{i,q_i(n),2^u}\|_2
\leq\\
&C\sqrt{\frac{|\nu_i(Nt)-[Ntc_{i,i_{k-1}+1}]|}N}\leq CN^{-\frac12}+Ct^
{\frac12}\sqrt{|\frac{\nu_i(Nt)}{Nt}-c_{i,i_{k-1}+1}|}\nonumber
\end{eqnarray}
where $C>0$ is independent of $N,u$ and $t$.
Second,  observe that
\begin{equation}\label{6.5}
\sum_{n=1}^{\nu_i(r)}Y_{i,q_i(n)}=
\sum_{n=1}^{\nu^{(k)}(r)}Y_{i,b^{(k)}_n}
\,\,\,\,\mbox{and}\,\,\,\,
\sum_{n=1}^{\nu_i(r)}Y_{i,q_i(n),2^u}=\sum_{n=1}^{\nu^{(k)}(r)}Y_{i,b^{(k)}_n,2^u}
\end{equation}
for any $n\in\bbN$, $u\geq 0$, $r>0,$ $1\leq k\leq v$ and $i_{k-1}< i\leq i_k$. Set
\begin{eqnarray*}
&\Psi_{N,k}(t)=\frac1{\sqrt N}\big(\sum_{n=1}^{\nu^{(k)}(Nt)}Y_{i,b^{(k)}_n}
\big)_{i=i_{k-1}+1}^{i_{k}}\text{ and }\hskip2cm\\
&\Psi_{N,k}^{(u)}(t)=\frac1{\sqrt N}\big(\sum_{n=1}^{\nu^{(k)}(Nt)}Y_{i,b^{(k)}_n
,2^{u}}\big)_{i=i_{k-1}+1}^{i_k}.
\end{eqnarray*}
Similarly to (\ref{6.6}), for any $1\leq k\leq v$, $t>0$ and $N\in\bbN$, 
\begin{eqnarray}\label{6.7}
&\|\Psi_{N,k}(t)-\hat\Psi_{N,k}(c^{(k)}t)\|_2\,,\,\,
\|\Psi_{N,k}^{(u)}(t)-\hat\Psi_{N,k}^{(u)}(c^{(k)}t)\|_2\leq\\
&CN^{-\frac12}+
Ct^{\frac12}\sqrt{\big|\frac{\nu^{(k)}(Nt)}{Nt}-c^{(k)}\big|}\nonumber
\end{eqnarray}
where
\begin{eqnarray*}
\hat\Psi_{N,k}(t)=\frac1{\sqrt N}\big(\sum_{n=1}^{[Nt]}Y_{i,b^{(k)}_n}\big)
_{i=i_{k-1}+1}^{i_k}\text{ and }
\hat\Psi_{N,k}^{(u)}(t)=\frac1{\sqrt N}\big(\sum_{n=1}^{[Nt]}
Y_{i,b^{(k)}_n,2^u}\big)_{i=i_{k-1}+1}^{i_k}\nonumber.
\end{eqnarray*}
Now by (\ref{6.6})-(\ref{6.7}),  for any $t>0$,
\begin{eqnarray}\label{6.7+}
&\lim_{N\to\infty}\|(\xi_i(t))_{i=1}^\ell-(\hat\Psi_k(c^{(k)}t))_{k=1}^v\|_2=0.
\end{eqnarray}
Next, by (\ref{6.2+}) and
(\ref{6.2++}), for any $T>0$ and $k=1,...,v$,
\begin{eqnarray}\label{6.9'}
&\lim_{u\to\infty}\sup_{N\in\bbN}\big\|\sup_{0\leq t\leq T}|\hat\Psi_{k,N}(t)-
\hat\Psi_{k,N}^{(u)}(t)|\big\|_2=0
\end{eqnarray}
where  we used (\ref{6.5}) with $r=z^{(k)}_{[Nt]}$ and that there exists a constant $L$
such that $\nu_i(z^{(k)}_r)\leq Lr$ for any $r>0$ and $i_{k-1}<i\leq i_k$.

We conclude that in order to prove that the process
$\big(\xi_{i,N}(\cdot)\big)_{i=1}^\ell$ weakly converges as $N\to\infty$, it is sufficient to 
show that  the process $\hat\Psi_N(t)=\big(\hat\Psi_{N,k}(t)\big)_{k=1}^v$  weakly
converges, and then to  plug
in $tc^{(k)}$ in place of $t$ in  the coordinates at places $i_{k-1}+1,...,i_k$, namely 
to replace $\hat\Psi_{N,k}(t)$ with $\hat\Psi_{N,k}(tc^{(k)})$. 
In order to obtain the desired weak convergence of
$\hat\Psi_N$
it is important to understand the asymptotic behavior of the approximation
processes $\hat\Psi_N^{(u)}=\big(\hat\Psi_{N,k}^{(u)}\big)_{k=1}^v$  by describing 
their sets of  limit points.

\begin{proposition}\label{prop6.1}
For each fixed $u\in\bbN$  when $N\to\infty$
the processes  $\hat\Psi_N^{(u)}$ 
form a tight family of processes in the Skorokhod space
$D\big([0,T];\bbR^\ell\big)$.
All limit points have the form $\zeta_1\times\zeta _2\times\cdots\times\zeta_v$, where 
$\zeta_k=\zeta_k^{(u)}, 1\leq k\leq v$ is a centered Gaussian process  with
independent increments. The second moments of each $\zeta_k$
are uniformly integrable so that
the  covariances of the limiting Gaussian process $\zeta_k$ can be identified as the limits
of the corresponding covariances of the approximating processes
along a subsequence.
\end{proposition}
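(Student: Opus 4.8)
The plan is to identify each $\hat\Psi_{N,k}^{(u)}(\cdot)$ with a normalized triangular-array sum and to run it through the martingale-approximation scheme recalled in Theorem \ref{Thm5.1}, Theorem \ref{Thm5.6} and Corollary \ref{Cor5.7}, combined with a subsequence compactness argument. Fix $r=2^u$ and, for $n\ge1$, set
\[
U_{N,n}=\frac1{\sqrt N}\big(Y_{i,q_{i_{k-1}+1}(z_n^{(k)}),2^u}\big)_{i=i_{k-1}+1}^{i_k},
\]
so that $\hat\Psi_{N,k}^{(u)}(t)=\sum_{1\le n\le Nt}U_{N,n}$. By (\ref{6.1}) each nonzero coordinate of $U_{N,n}$ is, when $z_n^{(k)}\in A_i$, of the form $F_{i,q_i(m_i),2^u}\big(X_{2^u}(q_1(m_i)),\dots,X_{2^u}(q_{i-1}(m_i)),\om\big)$ with $q_i(m_i)=q_{i_{k-1}+1}(z_n^{(k)})$, hence $U_{N,n}$ is measurable with respect to the filtration $\cG_{N,n}=\cF_{-\infty,\,q_{i_{k-1}+1}(z_n^{(k)})+2^u}$; this gives \textbf{B1}. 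Condition \textbf{B2} — uniform square integrability, in fact a uniform bound on $E|\langle\la,\sqrt N\,U_{N,n}\rangle|^{2q}$ with $q>1$ — follows from (\ref{5.0.0}) applied at exponent $2q$ (legitimate since $\gamma_{2q\iota}<\infty$) together with (\ref{6.2}). For \textbf{B3} I would invoke the conditional-expectation estimate of \cite{KV} (Corollary 3.6): since each $F_i$ is centered in its last argument by (\ref{2.17}) and the last argument of the $i$-th coordinate of $U_{N,m}$ sits at height $q_{i_{k-1}+1}(z_m^{(k)})$, the quantity $\|E(\langle\la,U_{N,m}\rangle\,|\,\cG_{N,n})\|_2$ is controlled by a power of $\vp_{q,p}$ evaluated at $q_{i_{k-1}+1}(z_m^{(k)})-q_{i_{k-1}+1}(z_n^{(k)})-2^{u+1}$ plus an approximation term handled by (\ref{6.2}) and (\ref{2.13}). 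By Lemma \ref{lem4.1} one has $z_m^{(k)}-z_n^{(k)}\gtrsim m-n$, and after the Section \ref{sec3} reduction $q_{i_{k-1}+1}$ grows at least linearly, so the above separation grows at least linearly in $m-n$ once $m-n$ is large; hence (\ref{2.12}) yields a summable bound $c(m-n)$ and \textbf{B3} holds.

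Since each coordinate of $U_{N,n}$ has the form $H_n\big(X_{2^u}(q_1(\cdot)),\dots,\om\big)$ required in Lemma 5.2 and Remark 5.3 of \cite{KV} (valid here because the gaps $q_{i+1}-q_i$ and $q_i(\cdot+1)-q_i(\cdot)$ grow at least linearly), condition \textbf{B4} may be replaced by (\ref{B4'}), i.e.\ by convergence of $\frac1N\sum_{1\le n\le Nt}E\langle\la,W_{N,n}\rangle^2$. I would not try to produce the limit in closed form here. Instead, given any subsequence $\{N_j\}$, boundedness (from B2--B3) and monotonicity in $t$ of these averages let one pass, by a diagonal argument over rational $t$, to a further subsequence along which $\frac1N\sum_{1\le n\le Nt}E\langle\la,W_{N,n}\rangle^2\to A_\la(t)$ for a nondecreasing, Lipschitz (each $E\langle\la,W_{N,n}\rangle^2$ being $O(1/N)$ uniformly in $n$) function $A_\la$. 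Along this subsequence B1--B4 hold, so by Corollary \ref{Cor5.7} and the last assertion of Theorem \ref{Thm5.6}, applied as permitted by Remark \ref{rem6.1}, $\hat\Psi_{N,k}^{(u)}(\cdot)$ converges on $D\big([0,T];\bbR^{i_k-i_{k-1}}\big)$ to a centered Gaussian process with independent increments, whose covariance is determined by the $A_\la$. As every subsequence admits such a convergent sub-subsequence, the family $\{\hat\Psi_{N,k}^{(u)}(\cdot)\}_N$ is relatively compact, hence tight, and each of its limit points is Gaussian with independent increments. Finally, the uniform $L^{2q}$ bound with $q>1$ (hence uniform integrability of the squares) upgrades weak convergence to convergence of second moments, so the covariances of any limit point equal the limits, along the relevant subsequence, of $E\big(\hat\Psi_{N,k}^{(u),i}(s)\hat\Psi_{N,k}^{(u),j}(t)\big)$; this is the last claim.

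The main obstacle is \textbf{B3}: the summable mixing bound for this array. Everything else is either immediate from Assumption \ref{ass2.1} and the cited weak-convergence theorems or is the standard compactness/diagonal argument. The delicate point, absent in \cite{KV}, is that a degree block may now contain several polynomials $q_{i_{k-1}+1}<\dots<q_{i_k}$ of equal degree, so consecutive lattice points $z_n^{(k)}<z_{n+1}^{(k)}$ of $A^{(k)}$ can be $O(1)$ apart (and after evaluating by $q_{i_{k-1}+1}$ the associated $\sig$-algebras may even overlap for small $m-n$, forcing $c(0),c(1),\dots$ to be merely $O(1)$); one must therefore verify that for $m-n$ large the relevant $\sig$-algebras are still separated by a distance growing with $m-n$, which — via Lemma \ref{lem4.1} and the linear-or-faster growth of $q_{i_{k-1}+1}$ — reduces to the estimate indicated above, the summability of $c(\cdot)$ being unaffected by its bounded initial terms.
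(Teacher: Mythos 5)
Your overall architecture is exactly the paper's: the same triangular array $Z^{(u)}_{N,i,n}=Y_{i,b_n,2^u}$ with $b_n=q_{i_{k-1}+1}(z_n^{(k)})$, the same filtration $\cF_{-\infty,b_n+2^u}$, verification of B1--B3, replacement of B4 by (\ref{B4'}), and the diagonal/subsequence extraction of the limits $A^{(u)}_{i,i'}(t)$ followed by Corollary \ref{Cor5.7} (with Remark \ref{rem6.1}). You are also right that B3 is the crux. But the single estimate you propose for B3 is incomplete in a way that matters. Writing $b_m=q_i(\hat m)$, the coordinate $Y_{i,b_m,2^u}$ depends on $X_{2^u}(q_1(\hat m)),\dots,X_{2^u}(q_i(\hat m))$, and Corollary 3.6(ii) of \cite{KV} exploits the centering (\ref{2.17}) only when the conditioning $\sigma$-algebra already contains the first $i-1$ arguments, i.e.\ when $q_{i-1}(\hat m)\leq b_n$; only then do you get the bound $c_1\vp_{q,p}(b_m-b_n-2^{u+1})$. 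When $q_{i-1}(\hat m)>b_n$ this fails: conditioning on $\cF_{-\infty,b_n+2^u}$ then essentially produces $EF_i\big(X(q_1(\hat m)),\dots,X(q_i(\hat m))\big)$, which is an integral against the joint law of these variables and is \emph{not} annihilated by (\ref{2.17}) --- it is only small when the gap $q_i(\hat m)-q_{i-1}(\hat m)$ is large.

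So in that second case the correct bound is $c_1\vp_{q,p}(q_i(\hat m)-q_{i-1}(\hat m)-2^{u+1})$, obtained after first contracting onto the larger $\sigma$-algebra $\cF_{-\infty,q_{i-1}(\hat m)+2^u}$, and one must still convert it into a summable function of $m-n$: this uses $q_i(\hat m)-q_{i-1}(\hat m)\geq C\hat m$ (valid for $\hat m$ beyond some $n^*$, after the Section \ref{sec3} reduction) together with $\hat m=q_i^{-1}(b_m)\geq C'm\geq C'(m-n)$, the latter coming from (\ref{6.4}) and the positivity of $q_i^{-1}\circ q_{i_{k-1}+1}$. The finitely many indices with $\hat m<n^*$ or $m-n<2^{u+1}$ are absorbed via B2. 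This case split and the comparison $\hat m\gtrsim m$ constitute the genuinely new work relative to \cite{KV}; your bound in terms of $b_m-b_n-2^{u+1}$ alone does not cover it, even though the conclusion (a summable $c(m-n)$) is ultimately true. Two smaller points: for the branch you do treat, $b_m-b_n\geq m-n$ is immediate because $(b_n)$ is a strictly increasing integer sequence, so no appeal to Lemma \ref{lem4.1} is needed there; and no extra ``approximation term'' via (\ref{6.2}) enters B3, since the array already consists of the smoothed variables $Y_{i,\cdot,2^u}$. The rest of your argument (B1, B2, the Lipschitz-in-$t$ diagonal argument for (\ref{B4'}), and the uniform integrability of the squares identifying the limiting covariances) matches the paper's proof.
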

\begin{proof}
Fix some $1\leq k\leq v$ and set $z_n=z_n^{(k)}$ and
$b_n=b_n^{(k)}=q_{i_{k-1}+1}(z_n)$. Then $(b_n)_{n=1}^\infty$ is a
monotone increasing sequence of natural numbers. We first prove that 
when $N\to\infty$
the processes  $\hat\Psi_{k,N}^{(u)}$ 
form a tight family of processes in the Skorokhod space
$D\big([0,T];\bbR^{i_k-i_{k-1}}\big)$, and then identify the limit points.
For any $N\in\bbN$ consider the filtration
$\{\cG^{(u)}_{k,N,n},\,n\geq1\}$ where
$\cG^{(u)}_{k,N,n}=\cG^{(u)}_{k,n}=\cF_{-\infty,b_n+2^u}$ and let
the adapted random vectors  $\{Z^{(u,k)}_{N,n},\,n\geq1\}$ be defined
 by $Z_{N,n}^{(u,k)}=(Z^{(u)}_{N,i,n})_{i=i_{k-1}+1}^{i_k}$, where  
$Z^{(u)}_{N,i,n}=Z^{(u)}_{i,n}=Y_{i,b_n,2^u}$. Then
\begin{eqnarray}\label{H-Psi-rep}
&\hat\Psi_{k,N}^{(u)}(t)=\frac1{\sqrt
N}\sum_{n=1}^{[Nt]}Z^{(u,k)}_{N,n}.
\end{eqnarray}
Next, we show  that for any $i_{k-1}<i\leq i_k$ the 
one dimensional process $\{Z_{N,i,n}^{(u)},\,n\geq1\}$
satisfies conditions B1-B3 from Theorem \ref{Thm5.1}
with the filtration
$\{\cG^{(u)}_{k,N,n},\,n\geq1\}$, which clearly
implies that any linear combination $\{\langle\la, Z_{N,n}^{(u,k)}\rangle,n\geq1\}$
satisfies conditions B1-B3 with this filtration. Indeed, fix some $i_{k-1}<i\leq i_k$.
Condition B1  is just $\cG^{(u)}_{k,N,n}$-measurability  of $Z_{N,i,n}^{(u)}$ and 
Condition B2 is verified exactly as in
Proposition 5.8 from \cite{KV}. 

Before verifying Condition B3, we need the following simple observation. 
We claim that there exist constants $M_0,A_0>0$ such that
\begin{eqnarray}\label{(2.5)}
b_m-q_{i-1}(q_i^{-1}(b_m))\geq A_0m \,\text{ for any } m\geq M_0
\end{eqnarray}
where we set $q_0=0$ in case that $i=1$. 
Indeed, 
since  $q_i(y)-q_{i-1}(y)$  is a polynomial converging to $\infty$
as $y\to\infty$, there exist $M_1>0$ and $C>0$ such that 
$q_i(y)-q_{i-1}(y)\geq Cy$ for any $y\geq M_1$.   
By  (\ref{6.4}) and the definition of $b_l$ we have
$\lim_{l\to\infty}q_i^{-1}(b_l)/l=c_{i,i_{k-1}+1}/c^{(k)}$ and since we assumed that
$q_i^{-1}\circ q_{i_{k-1}+1}$ is positive on $[0,\infty)$, there exists $C'>0$
such that $q_i^{-1}(b_l)\geq C'l$ for any $l\in\bbN$. Set $\hat m=q_i^{-1}(b_m)$. 
Then  $\hat m\geq C'm$,   
and hence if $m\geq M_0=M_1/C'$ then $\hat m\geq M_1$ and so
\[ 
b_m-q_{i-1}(q_i^{-1}(b_m))=q_i(\hat m)-q_{i-1}(\hat m)\geq C\hat m\geq CC'm
\]
and (\ref{(2.5)}) follows with $A_0=CC'$.

Now we show that Condition B3 is satisfied. We have to control
 $\|E(Y_{i,b_m,2^u}|\cF_{-\infty,b_n+2^u})\|_2$
for $m\geq n$. Notice that it vanishes unless $b_m=q_i(\hat m)$
for some $\hat m\in\bbN$, and so we consider only  this case. On the one hand, 
 if $q_{i-1}(\hat m)\leq b_n$ and $b_m=q_i(\hat m)\geq b_n+2^{u+1}$
 then by (\ref{2.17}) together with Corollary 3.6(ii) from \cite{KV} 
  we obtain that
\begin{eqnarray}\label{(1)}
&\|E(Y_{i,b_m,2^u}|\cF_{-\infty,b_n+2^u})\|_2=
\|E(Y_{i,q_i(\hat m),2^u}|\cF_{-\infty,b_n+2^u})\|_2\leq\\
& c_1\vp_{q,p}(q_i(\hat m)-b_n-2^{u+1})=
c_1\vp_{q,p}(b_m-b_n-2^{u+1}) \leq c_1\vp_{q,p}(m-n-2^{u+1})\nonumber
\end{eqnarray}
where $c_1>0$ depends on the parameters
 $d,p,\ka,\iota,m,q,K$ from Assumption \ref{ass2.1}
and  (\ref{2.4})-(\ref{2.5}).
The last inequality holds true if $m>n+2^{u+1}$ and we used that 
$b_m-b_n\geq m-n$ which is satisfied since
$(b_n)_{n=1}^\infty$ is strictly increasing and takes  natural values.
On the other hand, if $q_{i-1}(\hat m)\geq b_n$ then by the contraction
property of conditional expectations similarly to  (\ref{(1)}) we have
\begin{eqnarray}\label{(2)}
&\|E(Y_{i,b_m,2^u}|\cF_{-\infty,b_n+2^u})\|_2=
\|E(Y_{i,q_i(\hat m),2^u}|\cF_{-\infty,b_n+2^u})\|_2\leq\\
&\| E(Y_{i,q_i(\hat m),2^u}|\cF_{-\infty,q_{i-1}(\hat m)+2^u})\|_2\leq
c_1\vp_{q,p}(q_i(\hat m)-q_{i-1}(\hat m)-2^{u+1})\nonumber
\end{eqnarray}
where the second inequality holds if $q_i(\hat m)-q_{i-1}(\hat m)>2^{u+1}$. 
Next, 
set $K_0=\max(M_0,2^{u+2}/A_0)$ where $M_0$ and $A_0$ satisfy
(\ref{(2.5)}). 
Then $A_0m-2^{u+1}\geq A_0m/2$ for any 
$m\geq K_0$, and  we conclude by (\ref{(2.5)}) and (\ref{(2)}) that for any $m\geq K_0$,
\begin{equation}\label{(3)}
\|E(Y_{i,b_m,2^u}|\cF_{-\infty,b_n+2^u})\|_2
\leq c_1\vp_{q,p}(A_0m-2^{u+1})\leq c_1\vp_{q,p}(\frac{mA_0}2)
\end{equation}
assuming that $q_{i-1}(\hat m)\geq b_n$, 
where $\vp_{q,p}(s)=\vp_{q,p}([s])$ for any $s\geq 0$. 

Finally, suppose that neither (\ref{(1)}) nor
(\ref{(3)}) can be applied, namely that $m$ does not satisfy
$q_{i-1}(\hat m)\leq b_n\,,\,q_i(\hat m)\geq b_n+2^{u+1}\,\,and\,\,m>n+2^{u+1}$ 
or $q_{i-1}(\hat m)\geq b_n\,\,and\,\,m\geq K_0$. Then either 
$m\leq \max(n+2^{u+1},K_0)$ or $b_m=q_i(\hat m)<b_n+2^{u+1}$. The 
last inequality  implies that $m-n<2^{u+1}$,  since $n-m\leq b_m-b_n$. 
Thus, there exist at most $2^{u+1}+K_0$ naturals $m\geq n$ such that  
neither (\ref{(1)}) nor (\ref{(3)}) can be applied. 
 Approximating
$\|E(Y_{i,b_m,2^u}|\cF_{-\infty,b_n+2^u})\|_2$ by $\|Y_{i,b_m,2^u}\|_2$
 for such $m$'s (using validity of Condition B2), 
we see that there exist constants $c_2,C_0>0$ such that 
\[
\sum_{m=n}^{\infty}\|E(Y_{i,b_m,2^u}|\cF_{-\infty,b_n+2^u})\|_2
\leq c_2\big(2^{u+1}+
K_0+\sum_{s=1}^{\infty}\vp_{q,p}(C_0s)\big)<\infty,
\]
where the right hand side is finite 
 in view of (\ref{2.12}),  and  Condition B3 is satisfied. 

Next, we prove that the family of processes $\hat\Psi_{k,N}^{(u)}$ is tight when $N\to\infty$, 
and specify the corresponding limit points. We 
start by showing that along suitable subsequences any linear combination of
its one dimensional components satisfy (\ref{B4'}), and so also Condition B4 from 
Theorem \ref{Thm5.1} (see Appendix).
Indeed, set 
$W_{N,n}^{(u,k)}=(W_{N,i,n}^{(u)})_{i=i_{k-1}+1}^{i_k}$, where
\[
 W^{(u)}_{N,i,n}=Z^{(u)}_{N,i,n}
+\sum_{s\geq n+1}E(Z^{(u)}_{N,i,s}|\cG^{(u)}_{k,N,n})-\sum_{s\geq n}
E(Z^{(u)}_{N,i,s}|\cG^{(u)}_{k,N,n-1})
\]
and $Z^{(u)}_{N,i,n}$ is defined before (\ref{H-Psi-rep}).
Let $\{\hat\Psi_{N_j,k}^{(u)}(\cdot),j\geq1\}$ be any subsequence. The
uniform integrability (Condition B2) together with validity of Condition B3
 imply that $\|W_{N,i,n}^{(u)}\|_2\leq C_1$
for some $C_1>0$ which is independent of $i,n$ and $N$.
Therefore, a diagonal argument  shows that
we can pick a subsequence $\{N_{j_z},z\geq1\}\subset\{N_j,j\geq1\}$ such that
for any $i_{k-1}<i,i'\leq i_k$ the limit
\[
A_{i,i'}^{(u)}(t)=\lim_{z\to\infty}\frac1{N_{j_z}}\sum_{n=1}^{[tN_{j_z}]}
E(W_{N_{j_z},i,n}^{(u)})E(W_{N_{j_z},i',n}^{(u)})
\]
 exists on a dense subset of $[0,T]$. Observe now that
 \[
|\frac1N\sum_{n=1}^{tN}E(W_{N,i,n}^{(u)})E(W_{N,i',n}^{(u)})-
\frac1N\sum_{n=1}^{sN}E(W_{N,i,n}^{(u)})E(W_{N,i',n}^{(u)})|\leq
C_1^2|t-s|
\]
for any $N$, $i_{k-1}<i,i'\leq i_k$  and $s,t\geq 0$, 
and so  this limit exists for any $t\in[0,T]$ and $i_{k-1}<i,i'\leq i_k$. 
Next, let $\la=(\la_i)_{i=i_{k-1}+1}^{i_k}\in\bbR^{i_k-i_{k-1}}$
and consider the linear combinations
\begin{eqnarray*}
\langle \la, Z_{N,n}^{(u,k)}\rangle
=\sum_{i_{k-1}<i\leq i_k}\la_iZ_{N,i,n}^{(u)}\,\mbox{ and }\,
\langle\la,W_{N,n}^{(u,k)}\rangle=\sum_{i_{k-1}<i\leq i_k}\la_iW_{N,i,n}^{(u)}.
\end{eqnarray*}
It follows that  for any $t\in[0,T]$,
 \begin{equation}\label{Cov}
\lim_{z\to\infty}\frac1{N_{j_z}}\sum_{n=1}^{tN_{j_z}}
E\big(\langle\la,W_{N,n}^{(u,k)}\rangle\big)^2=
\sum_{i_{k-1}<i,i'\leq i_k}\la_i\la_{i'}A_{i,i'}^{(u)}(t)
 \end{equation}
 which implies that (\ref{B4'}) is satisfied with
$U_{N,n}=\langle \la, Z_{N,n}^{(u,k)}\rangle$
 along  the subsequence $\{N_{j_z}, z\geq 1\}$ (which is independent of $\la$). 
 Therefore, applying  Proposition \ref{Cor5.7} with the subsequence
 $\{\hat\Psi_{N_{j_z},k}^{(u)}(\cdot),z\geq1\}$ we
 deduce that it converges to  a Gaussian process with independent
 increments and covariance matrix
$(A^{(u)}_{i,i'}(\cdot))_{i_{k-1}<i,i'\leq i_k}$.

Next, let $\{m_N\}_{N=1}^\infty\subset\bbN$ be a sequence satisfying 
$\lim_{N\to\infty}\frac {m_N}N=0$ and set  
\[
\tilde\Psi_{k,N}^{(u)}(t)=\frac1{\sqrt N}\sum_{m_N+1\leq n\leq Nt+m_N}
\big(Z_{N,i,n}^{(u)}\big)_{i=i_{k-1}+1}^{i_k}\,\,\text{ and } 
\tilde\Psi_N^{(u)}=\big(\tilde\Psi_{k,N}^{(u)}\big)_{k=1}^v.
\]
Then by (\ref{H-Psi-rep}) and (\ref{Lem5.2}),
\begin{equation}\label{Est}
\lim_{N\to\infty}
\big\|\sup_{0\leq t\leq T}|\hat\Psi_{k,N}^{(u)}(t)-\tilde\Psi_{k,N}^{(u)}(t)|\big\|_2=0
\text{ and } \lim_{N\to\infty}
\big\|\sup_{0\leq t\leq T}|\hat\Psi_N^{(u)}(t)-\tilde\Psi_N^{(u)}(t)|\big\|_2=0
\end{equation}
where the first equality holds true for any $1\leq k\leq v$. 
Let $\{\tilde\Psi_{k,N_j}^{(u)}(\cdot),j\geq1\}$ be a subsequence  of 
$\{\tilde\Psi_{k,N}^{(u)}(\cdot),N\geq1\}$. 
Then 
by (\ref{Est}), this subsequence weakly converges if and only if the 
subsequence $\{\hat\Psi_{k,N_j}^{(u)}(\cdot),j\geq1\}$ of 
$\{\hat\Psi_{k,N}^{(u)}(\cdot),N\geq1\}$  weakly converges, and in
this case they converge to the same limit. Similarly, a
subsequence 
 $\{\tilde\Psi_{N_j}^{(u)}(\cdot),j\geq1\}$ of $\{\tilde\Psi_N^{(u)}(\cdot),N\geq1\}$
weakly converges if and only if the subsequence
$\{\hat\Psi_{N_j}^{(u)}(\cdot),N\geq1\}$  weakly converges,
 and in this case they converge to the same limit. 

Now we show that the processes 
$\hat\Psi_N^{(u)}=\big(\hat\Psi_{N,k}^{(u)}\big)_{k=1}^v$, $N\in\bbN$ form a tight family of 
processes.
 Set 
\[
m_N=m_{T,N}=\min\{n: q_{i_{k-1}+1}(z_n^{(k)})\geq q_{i_{k-2}+1}(z_{NT}^{(k-1)})
,\,\,\,\forall  k=2,3,...,v\}.
\]
Then, $\lim_{N\to\infty}\frac{m_N}N=t_0=0$ 
taking into account (\ref{6.4}) and that $\deg{q_{i_{k-1}+1}}>\deg{q_{i_{k-2}+1}}$.
As explained above, it is sufficient to prove that the 
processes $\tilde\Psi_N^{(u)}$, $N\in\bbN$ form a tight family. Let 
$\{\tilde\Psi_{N_j}^{(u)}, j\geq1\}$ be a subsequence. Then there exists a subsequence 
$\{N_{j_z},z\geq1\}\subset\{N_j,j\geq1\}$ such that (\ref{Cov}) is satisfied 
for any $1\leq k\leq v$ and  $i_{k-1}<i,i'\leq i_k$. Thus, for any 
$1\leq k\leq v$ the processes
$\{\tilde\Psi_{k,N_{j_z}}^{(u)}, z\geq 1\}$ weakly  converge to a centered
 Gaussian limit $\zeta_k^{(u)}$
with the covariances matrix $(A_{i,i'}^{(u)}(\cdot))_{ i_{k-1}<i,i'\leq i_k}$.
Similarly  to (\ref{Est}) and below it, we can omit the last $m_N$
summands in the definition of $\tilde\Psi_{k,N}^{(u)}$ since $\lim_{N\to\infty}\frac{m_N}N=0$. 
A repetitive use of  Theorem \ref{Thm5.6} shows that the subsequence 
$\{\tilde\Psi_{N_{j_z}}^{(u)}, z\geq 1\}$ weakly converges to 
$\zeta_1^{(u)}\times\zeta_2^{(u)}\times\cdots\times\zeta_k^{(u)}$ and the
proof of Proposition \ref{prop6.1} is complete.  
\end{proof}

Now we  deduce the desired weak converges of $\hat\Psi_N=\big(\hat\Psi_{k,N}\big)_{v=1}^k$
 by  letting  $u\to\infty$. We will use the notations $Z_{N,i,n}^{(u)}$,
$W_{N,i,n}^{(u)}$ and $\cG_{k,N,n}^{(u)}$ from the proof of Proposition \ref{prop6.1}.
In view of Proposition \ref{prop6.1} and its proof
 it suffices to show that for any $1\leq k\leq v$, $i_{k-1}< i,i'\leq i_k$ and $T>0$,
\begin{equation}\label{6.10}
\lim_{u\to\infty}\limsup_{N\to\infty}\sup_{0\leq t\leq T}|
\frac1N\sum_{n=1}^{[Nt]}E(W_{N,i,n}^{(u)})E(W_{N,i',n}^{(u)})-
\frac t{c^{(k)}} D_{i,i'}|=0
\end{equation}
where $D_{i,i'}$'s were introduced in Theorem \ref{thm2.2}.
This together with (\ref{6.9'}) imply that $\hat\Psi_N(\cdot)$ converges as $N\to\infty$
 towards a centered Gaussian process with independent increments and covariance matrix 
$(A_{i,i'})_{1\leq i,i'\leq\ell}$ satisfying $A_{i,i'}(t)=tD_{i,i'}/c^{(k)}$ if $i_{k-1}<i,i'\leq i_k$ and otherwise 
$A_{i,i'}(t)=0$.

Establishing (\ref{6.10}),  let $i_{k-1}<i,i'\leq i_k$, $N,u\in\bbN$ and 
$T,t>0$.  For any $i_{k-1}<j\leq i_k$
the process $\{W^{(u)}_{N,j,n}, n\geq1\}$ is a martingale difference with respect to the filtration
$\{\cG_{k,N,n}^{(u)}, n\geq1\}$ and therefore,
\begin{equation}\label{6.10'}
\sum_{n=1}^{[Nt]}E[(W^{(u)}_{N,i,n})(W^{(u)}_{N,i',n})]=
E\big[(\sum_{n=1}^{[Nt]}W^{(u)}_{N,i,n})
(\sum_{n=1}^{[Nt]}W^{(u)}_{N,i',n})\big].
\end{equation}
Condition B3 verified  in Proposition \ref{prop6.1} implies that
$\|\sum_{n=1}^{[Nt]}W^{(u)}_{N,j,n}-\sum_{n=1}^{[Nt]}Z^{(u)}_{N,j,n}\|_2$
is bounded in $N$, $t\in[0,T]$ and $i_{k-1}<j\leq i_k$. 
This together with (\ref{6.10'}) and (\ref{Lem5.2}) shows that
(\ref{6.10}) follows from
\begin{equation}\label{6.9''}
\lim_{u\to\infty}\limsup_{N\to\infty}\sup_{0\leq t\leq T}
|\frac1NE[\big(\sum_{n=1}^{[Nt]}Z^{(u)}_{N,i,n}\big)
(\sum_{n=1}^{[Nt]}Z^{(u)}_{N,i',n}\big)]
-\frac t{c^{(k)}}D_{i,i'}|=0.
\end{equation}
Proving (\ref{6.9''}),   first by 
(\ref{6.7})  and (\ref{Lem5.2}) applied with $j=i,i'$,
the normalized sums $N^{-1/2}\sum_{n=1}^{[Nt]}Z^{(u)}_{N,j,n}$ can be replaced with
 $N^{-1/2}\sum_{n=1}^{\nu^{(k)}(t/c^{(k)})}Y_{j,b_n^{(k)},2^u}$, 
since the approximations in (\ref{6.7}) are uniform in $u$.
Second, by (\ref{6.5}) and then by (\ref{6.6}) the latter  sums can be replaced with 
$\xi_{j,N}^{(u)}(t/c^{(k)})$, where we used again  (\ref{Lem5.2}) and that
the approximations in (\ref{6.6}) are uniform in $u$. 
Third, by (\ref{6.2+++}) the sums $\xi_{j,N}^{(u)}(t/c^{(k)})$  can be replaced with
$\xi_{j,N}(t/c^{(k)})$
since  the approximation there is uniform in $N$ and $0\leq t\leq T$.
Now  (\ref{6.9''}) follows from
Propositions \ref{prop5.2} and \ref{prop5.3}.

Plugging in $c^{(k)}t$ in place of $t$ in the coordinates at places 
$i_{k-1}+1,...,i_k$ shows that 
 $\big(\hat\Psi_{N,k}^{(u)}(c^{(k)}t)\big)_{k=1}^v$ converges in distribution as
$N\to\infty$ to a Gaussian process $\zeta=\zeta_1\times\zeta_2\times\cdots\times\zeta_v$, 
$\zeta_k(t)=\big(\eta_i(t)\big)_{i=i_{k-1}+1}^{i_k}$, 
with stationary independent increments and covariance matrix satisfying
 $A_{i,j}(t)=tD_{i,j},\,1\leq i,j\leq \ell$, where  $D_{i,j}$'s are given by
Propositions \ref{prop5.2} and \ref{prop5.3}. The convergence of 
$(\xi_{i,N})_{i=1}^\ell$ follows now from its tightness and from (\ref{6.6})-(\ref{6.7}). 
\\

Finally, by (\ref{2.19})
the process $\xi_{N}(t)$ weakly converges  to $\eta(t)$ defined by (\ref{2.19++}). 
It remains to show that $\eta$ is a Gaussian process. Let
$1\leq k\leq v$ and $i\leq i_k<j$. Then  $\eta_i$ and $\eta_j$
are independent. Therefore,  it suffices to prove that
 $\sum_{i=i_{k-1}+1}^{i_k}\eta_i(c_{i_{k-1}+1,i}t)$ is a Gaussian process
 for each $k$. Indeed, set $d_i=c_{i_{k-1}+1,i}$. Then 
 $d_{i_{k-1}+1}\leq d_{i_{k-1}+2}\leq...\leq d_{i_k}$ and 
 observe that
\begin{eqnarray*}
&\sum_{i=i_{k-1}+1}^{i_k}\eta_i(td_i)=\sum_{j=i_{k-1}+1}^{i_k}
\sum_{i=i_{k-1}+1}^{i_k}\la_{i,j}(\eta_j(td_i)-\eta_j(td_{i-1}))=\\
&\sum_{i=i_{k-1}+1}^{i_k}
\sum_{j=i_{k-1}+1}^{i_k}\la_{i,j}(\eta_j(td_i)-\eta_j(td_{i-1}))
\end{eqnarray*}
where $\la_{i,j}=1$ if $i\leq j$ and $\la_{i,j}=0$ if $i>j$, $d_{i_{k-1}}=0$
and $\eta_j(0)=0$.
The increments of $\big(\eta_i(t)\big)_{i=i_{k-1}+1}^{i_k}$  are independent. Thus,
the vectors
$\Gam_i(t)=\{\la_{i,j}(\eta_j(td_i)-\eta_j(td_{i-1})):i_{k-1}<j\leq i_k\}\,,\,\,i_{k-1}<i
\leq i_{k}$ are independent and Gaussian which makes $\eta(\cdot)$ a
Gaussian process. The increments of $\eta(\cdot)$
are not necessary independent as shown  in \cite{KV}.
The counter example given there is in the case of two linear polynomials $q_1$ and $q_2$. 
 In Section \ref{sec7} we will give another counter example with
 nonlinear $q_i$'s.
\qed

\section{Positivity of limiting variances and the differences of $\eta$}
\label{sec7}\setcounter{equation}{0}

\subsection{The measures $\ka_{A_l}$}\label{Meas}

We say that the variables $b_{s,i}$ and $b_{t,j}$ are 
\emph{equivalent} if
there exist $x_{i,j}\in\bbZ$ and $z_{i,j}, z_{s,t}\in\bbQ$ such that 
$q_{r_i}(y)=q_{r_j}(y-x_{i,j})+z_{i,j}$
and $q_{r_s}(y)=q_{r_t}(y-x_{i,j})+z_{s,t}$, for any $y\in\bbR$. It is clear that this is an equivalence 
relation  
and notice that when such $x_{i,j},z_{i,j}$ and $z_{s,t}$ exist,  then $z_{i,j},z_{s,t}\in\bbZ$. 
 Denote by  $\cB$ the set of all equivalence classes. 
Let $B\in\mathcal B$ and write  $B=\{b_{s_k,i_k}: 1\leq k\leq n_B\}$, 
where  $s_k=s_k(B)$, $i_k=i_k(B)$. 
Then for any $1\leq k,j\leq n_B$, 
\begin{eqnarray}\label{z-prop}
 q_{r_{s_j}}(y)-q_{r_{s_k}}(y+x_{i_k,i_j})=z_{s_j,s_k}=z_{s_j,s_1}-z_{s_k,s_1} 
, \text{ for any }  y\in\bbR.
 \end{eqnarray}
 Let the measure 
$\ka^{(B)}$ be the law of the random  vector  $Y^{(B)}(0)$, where 
$
Y^{(B)}(n)=\{Y_{s_j}(n+R+{z_{s_j,s_1}}): 1\leq j\leq n_B\}
$,
$R=R(B)\in\bbZ$ satisfies $R\geq|z_{s_j,s_k}|$ for any  $1\leq j,k\leq n_B$ and 
\begin{eqnarray}\label{Ys}
Y_s(n)=(X(n+d_{1,s}),...,X(n+d_{r_s-r_{s-1},s}))
\end{eqnarray}
where $d_{j,s}=q_{r_{s-1}+j}-q_{r_{s-1}+1}$, which is a constant.

Next, let $A\in\cA$, fix some $1\leq l\leq m(A)$ and set $A_l=A_{m(A),l}$.
Let $q_{r_i}\in A_l$ and $1\leq s\leq i$. Then for any $j$ and $t$ 
the variables  $b_{t,j}$ and $b_{s,i}$ are equivalent only if 
$q_{r_j}\in A_l$. As a consequence, there exists
 $\cB(A_l)\subset\cB$ such that 
\begin{eqnarray}
&\{b_{s,i}: q_{r_i}\in A_l,\,1\leq s\leq i\}=\cup_{B\in\cB(A_l)}B.
\end{eqnarray}
 Let the measure $\ka_{A_l}$
be defined by 
$d\ka_{A_l}(b^{(A_l)})=\prod_{B\in\cB(A_l)}
d\ka^{(B)}((b_{s_k(B),i_k(B)})_{1\leq k\leq n_B})$  and the  measure 
$\ka_A$ be defined by 
$d\ka_A(b^{(A)})=\prod_{1\leq l\leq m(A)}d\ka_{A_l}(b^{(A_l)})$.

\begin{lemma}\label{thm2.6-}
Suppose that Assumption \ref{ass2.1} is satisfied. Let $A\in\cA$ consists of 
nonlinear polynomials. Then for any $q_{r_i},q_{r_j}\in A$ such that $i\leq j$ and 
 $c_{r_i,r_j}=1$, 
\begin{eqnarray}\label{Dij}
D_{i,j}=c_{r_i,i_A}\int F_i(b_i)F_j(b_j)d\ka_A(b^{(A)}).
\end{eqnarray}
As a consequence, for any $1\leq l\leq m(A)$, 
\begin{equation}\label{DA-form}
D_{A_l}^2=\int G_{A_l}^2(b^{(A_l)})d\ka_{A_l}(b^{(A_l)})
\end{equation}
and therefore $D_{A_l}^2=0$ if and only if $G_{A_l}$ vanishes $\ka_{A_l}$- 
almost surely.
\end{lemma}

\begin{proof}
Relying on Section \ref{sec3}, we assume without  loss of generality that  
$\hat\ell=\ell$, which means that $r_i=i$, $i=1,...,\ell$. 
 Let $1\leq i<j\leq\ell$ be such that 
$q_i$ and $q_j$ are nonlinear, equivalent and  $c_{i,j}=1$. 
Let $x_{i,j}\in\bbQ$ satisfying (\ref{5.0})
and $M=M(1,1,x_{i,j})$ be defined by (\ref{4.01}) with $u=2$ and $\al_2=\be_2=1$.
Then Remark \ref{rem4.2} shows that $M>0$ if and only if 
$x_{i,j}\in\bbZ$, and in this case  $M=1$. 
Thus,  by  Proposition \ref{prop5.3},
\begin{eqnarray}\label{DijForm}
\,\,\,\,\,\,&D_{i,j}=c_{j,i_{k-1}+1}\int F_i(x)F_j(y)dm_{i,j}(x,y)=c_{i,i_{k-1}+1}\int F_i(x)F_j(y)dm_{i,j}(x,y)
\end{eqnarray}
when  $x_{i,j}\in\bbZ$, while $D_{i,j}=0$ when $x_{i,j}\notin\bbZ$. 
Here $k$ is such that $\deg q_i=m_k$.

Next, let $q_i,q_j\in A$. It is clear that 
the variables $b_{s,i}$ and $b_{s',i}$ are not equivalent when $s\not=s'$. 
In particular the marginal of $\ka_A$ corresponding to the variable $b_i$
is $\mu^i=\mu\times\mu\times\cdots\times\mu$, and for any $B\in\cB$ the intersection
\begin{eqnarray}\label{Inters}
 B\cap(\{b_{s,i}: 1\leq s\leq i\}\cup\{b_{t,j}: 1\leq t\leq j\})
\end{eqnarray}
contains at most two variables. 
The components of $b_i$ and $b_j$  are clearly independent with respect to 
$\ka_A$ when $q_i,q_j\in A$, $c_{i,j}=1$ 
and  $x_{i,j}\not\in\bbZ$, and (\ref{Dij}) follows in this situation since both its sides
 vanish.
Next, suppose that  $i<j$, $c_{i,j}=1$ and $x_{i,j}\in\bbZ$ and consider the
(distinct) variables  $b_{s,i}$ and $b_{t,j}$. They are equivalent if and only if
(\ref{5.0+}) is satisfied, and in this case  by (\ref{z-prop})
the $\bbR^\wp\times\bbR^\wp$ marginal of $\ka_A$ corresponding to the pair $(b_{s,i},b_{t,j})$
is the measure $\mu_{q_t(0)-q_s(x_{i,j})}$. 
Hence,  the marginal of $\ka_A$ corresponding to the pair
$(b_i,b_j)$ is $m_{i,j}$,  and 
(\ref{Dij}) follows from (\ref{DijForm}).
Finally, (\ref{Dij}) when $i=j$ follows from  (\ref{5.1+}), since the marginal of $\ka_A$
corresponding to $b_i$ is $\mu^i$, and 
(\ref{DA-form}) clearly follows from (\ref{Dij}) and (\ref{2.19+}).
\end{proof}

\subsection{Proof of Theorem \ref{thm2.3}}

 Let $A\in\cA$ and write 
\begin{eqnarray*}
&A\cap\{q_{r_i}:1\leq i\leq\hat\ell\}=\{q_{r_{a_i}}: 1\leq i\leq s\}
\end{eqnarray*} 
where $a_1<a_2<...<a_s$.
Set $t_0=0$ and $t_i=c_{i_{k-1}+1,a_i}$, $i=1,...,s$, 
where $k$ is such that  $\deg q_{a_1}=m_k$.
Let   $L\in\bbN$, $j_1<...<j_{L-1}<s$ and $d_1<...<d_L$ be such that 
$t_i=d_l$ if $j_{l-1}< i\leq  j_l$, where  we set $j_L=s$ and $d_0=j_0=0$. 
Set $C_l=\{q_{r_{a_i}}:\, j_{l-1}<i\leq j_l\}$, $l=1,2,...,L$.  Then $A=\bigcup_{l=1}^LC_l$
and this is a disjoint union. Furthermore, 
each $A_{m(A),i}=A_i$, $1\leq i\leq m(A)$
is contained in some $C_l$, and  for each $l$ we have 
\begin{eqnarray}\label{C_l}
&D^2_{C_l}=d_l\lim_{N\to\infty}E[(\sum_{i: q_{r_i}\in C_l}\xi_{i,N}(1))^2]
\,\,\text{ and }\,\,C_l=\bigcup_{i: A_i\subset C_l}A_i
\end{eqnarray} 
 where this union is disjoint. 
For each $u=1,2...,L$ set
$C^{(u)}=\cup_{u\leq l\leq L}C_l=\{q_{r_{a_i}}:\, j_{u-1}<i\leq s\}$ and
$\cD_u=\lim_{N\to\infty}E[(\sum_{i: q_{r_i}\in C^{(u)}}\xi_{i,N}(1))^2]$ which can also be
written as
\begin{eqnarray*}
&\cD_u=\sum_{j_{u-1}<i\leq s}D_{a_i,a_i}+2\sum_{j_{u-1}<i<j\leq s}D_{a_i,a_j}.
\end{eqnarray*}
We first claim that 
\begin{equation}\label{2.21+}
D^2_A=\sum_{u=1}^L (d_u-d_{u-1})\cD_u.
\end{equation}
Indeed,  notice that
\begin{eqnarray}\label{I}
&D_A^2=\sum_{1\leq i\leq s}t_iD_{a_i,a_i}
+2\sum_{1\leq i<j\leq s}t_iD_{a_i,a_j}=\\
&d_1\cD_1+\sum_{j_1<i\leq s}(t_i-d_1)D_{a_i,a_i}+
2\sum_{j_1< i<j\leq s}(t_i-d_1)D_{a_i,a_j}\nonumber
\end{eqnarray}
since $t_i=d_1$ for any $ j_0=0<i\leq j_1$. 
Similarly, for any $u=2,...,L-1$,
\begin{eqnarray}\label{II}
&\sum_{j_{u-1}<i\leq s}(t_i-d_{u-1})D_{a_i,a_i}+
2\sum_{j_{u-1}< i<j\leq s}(t_i-d_{u-1})D_{a_i,a_j}=\\
&(d_u-d_{u-1})\cD_u+\sum_{j_u<i\leq s}(t_i-d_u)D_{i,i}+
2\sum_{j_u<i<j\leq s}(t_i-d_u)D_{a_i,a_j}.\nonumber
\end{eqnarray}
Formula (\ref{2.21+}) follows from (\ref{I}) and a repetitive use of (\ref{II})
with $u=2,...,L-1$, observing that  the sum of the last two sums from
(\ref{II}) equals $(d_L-d_{L-1})\cD_L$, when $u=L-1$.

Proving Theorem \ref{thm2.3}, recall that
$D_{i,j}=0$   when $c_{r_i,r_j}=1$  and $x_{r_i,r_j}\notin\bbZ$. Thus by 
 Lemma \ref{thm2.6-} we have 
\begin{eqnarray*}
&D_{C_l}^2=\sum_{i: A_i\subset C_l}D^2_{A_i}=\int G_{C_l}^2(b^{(C_l)})d\ka_A(b^{(C_l)})
\,\text{ for any }\, 1\leq l\leq L.
\end{eqnarray*}
 Hence, $D^2_{C_l}=0$ for any $1\leq l\leq L$ if and only if $D^2_{A_i}=0$ for any $1\leq i\leq m(A)$.
On the one hand,
suppose that 
$D_A^2=0$. Then by (\ref{2.21+}) we have $\cD_u=0$ for any $1\leq u\leq L$. By 
the first equality from (\ref{C_l}) we have $D_{C_L}^2=d_L\cD_L$. 
Therefore, $D_{C_L}^2=0$ and so by the Cauchy-Schwarz inequality
\begin{eqnarray*}
&\cD_u=\lim_{N\to\infty}E[(\sum_{i: q_{r_i}\in \cup_{l=u}^{L-1}C_l}\xi_{i,N}(1))^2] 
\,\,\,\text{ for any } 1\leq u\leq L-1
\end{eqnarray*} 
and in particular $\cD_{L-1}=d_{L-1}^{-1}D^2_{C_{L-1}}$, implying that
$D^2_{C_{L-1}}=0$. 
Proceeding this way with $u=1,...,L-1$ in place of $L$
 we see that $D^2_{C_l}=0$, for any $l=1,2,...,L$. On the other hand, suppose that 
$D^2_{C_l}=0$ for any $l$.  Then, by 
the first equality from (\ref{C_l}) and  by the Cauchy-Schwarz inequality 
 $\cD_u=0$ for any $u$, and therefore $D_A^2=0$
 by  (\ref{2.21+}).  
Finally, 
when $A$ consists of linear polynomials then $A_i$'s are singletons and so $D_A^2=0$
if and only if $D_{i,i}=0$ for any linear $q_{r_i}$. By the Cauchy-Schwartz inequality $D_{i,i}$ vanishes
for any linear $q_{r_i}$ if and only if $D_{i,j}$ vanishes for any linear $q_{r_i}$ and $q_{r_j}$,
where we took into account 
(\ref{2.19+}),  and the proof of Theorem
\ref{thm2.3}\emph{(i)} is complete. The proof of Theorem \ref{thm2.3}\emph{(ii)} is a direct 
consequence of  Theorem \ref{thm2.3}\emph{(i)} and Lemma \ref{thm2.6-}.   \qed

The following corollaries are immediate consequences of Theorem \ref{thm2.3}.

\begin{corollary}\label{cor2.3-}
Let $\xi_N^{(k)}$ be as defined before (\ref{2.19})  and 
 set $\tilde D^2=\lim_{N\to\infty}E\big(\sum_{k: m_k>1}\xi_N^{(k)}(1)\big)^2$.
Then $\tilde D^2=0$ if and only if $\sum_{i: \deg q_{r_i}>1}F_i(b_i)=0$,\,
for $\prod_{A\in\cA: d_A>1}\ka_A$ almost any $\{b_i: \deg q_{r_i}>1\}$.
\end{corollary}

\begin{corollary}\label{cor2.3+}
Let $A\in\cA$ consists of nonlinear polynomials. Suppose that for any  distinct $q_{r_i},q_{r_j}\in A$
there  exist no $l,z\in\bbZ$ such that $q_{r_i}(y)=q_{r_j}(y-z)+l$, for any $y\in\bbR$. 
Then  $D_A^2=0$ if and only if the functions $F_s$, $q_{r_s}\in A$ 
 vanish $\nu_1\times\cdots\times\nu_{\hat\ell}$-almost surely. As a consequence, 
if $q_1$ is nonlinear and for any distinct $q_{r_i},q_{r_j}$ there exists no such $l,z$, 
then $D^2=0$ if and only if $F$ vanishes $\nu_1\times\cdots\times\nu_{\hat\ell}$-almost surly.
\end{corollary}

\begin{remark} 
Let $p$ and $q$ be polynomials. Existence of $z,l\in\bbZ$ such that $q(y)=p(y-z)+l$
for any $y\in\bbR$ clearly forms an equivalence relation, which is finer than $\cA$.  The 
sets  $A_l$ are the classes of  the reduction of this relation
to $\{q_{r_1},...,q_{r_{\hat\ell}}\}$. Unlike for classes of $\cA$, the covariances 
$D_{i,j}$, $q_{r_i}\in A_l, q_{r_j}\in A_{l'}$ 
do not necessarily vanish when $A_l\not=A_{l'}$.  Still, 
Theorem \ref{thm2.3} shows that $D^2=0$ if and only 
$D_{A_l}^2=0$ for each $A_l$. 
\end{remark}

\subsection{Proof of Theorem \ref{thm2.4}}
Theorem \ref{thm2.4} follows from Theorem 2.3 in \cite{HK} in the case when $q_i(n)=in$
for any $n\in\bbN$ and $i=1,...,i_1$. The proof proceeds in the same 
way in the case when $q_i(n)=m_in+b_i$ for some natural numbers $m_1<m_2<...<m_{i_1}$ 
and  integers $b_1,...,b_{i_1}$
if we replace $i_1$ with $m_{i_1}$, considering now 
$N_{m_{i_1}}^{(j)}$ in place of $N_{i_1}^{(j)}$, $j\in\bbN$ (which are defined in
the proof from \cite{HK}) taking into account Lemma \ref{lem5.1}.
In Section \ref{sec3} we showed that
the problem can be reduced to the case when  $\hat\ell=\ell$,
i.e. $r_i=i$  and the leading coefficients of the linear polynomials satisfy
$a_1^{(1)}<a_1^{(2)}<...<a_1^{(i_1)}$,  recalling that in our situation
$q_i(n)=a_1^{(i)}n+a_0^{(i)}$. Since $q_i(n)\in\bbN$ for any $n\in\bbN$
we see that $a_1^{(i)},a_0^{(i)}\in\bbZ$ and $a_1^{(i)}\geq 1$. Theorem 
\ref{thm2.4} follows now by the described above modification of the proof of Theorem 2.3 
in  \cite{HK}.

\subsection{The increments of $\eta$}

We begin with the proof of Theorem \ref{thm2.7}.
Establishing (\ref{2.23}), let $0\leq t_1\leq t_2\leq t_3$.  
By  (\ref{2.19++}), 
\begin{eqnarray}\label{7.1-}
&\eta(t)=\sum_{A\in\mathcal A}\eta^{(A)}(t)\,\,\,\text{where }\,\,\,
\eta^{(A)}(t)=\sum_{s:q_{r_s}\in A}\eta(c_{i_A,r_s}t)
\end{eqnarray}
and $i_A=i_{k-1}+1$, where $k=k_A$ is such that $\deg q_i=m_k$ for any $q_i\in A$.
Since $\eta_s$ and $\eta_{s'}$ are independent if $q_{r_s}$ and $q_{r_{s'}}$ 
are not equivalent, we obtain by (\ref{2.19+}) that
\begin{eqnarray}\label{7.1}
&E[(\eta(t_3)-\eta(t_2))\eta(t_1)]=\sum_{A\in\mathcal A}
E[(\eta^{(A)}(t_3)-\eta^{(A)}(t_2))\eta^{(A)}(t_1)]=\\
&\sum_{A\in\mathcal A}
\sum_{s_1,s_2\in S(A)} T_{A,s_1,s_2}(t_3,t_2,t_1)D_{s_1,s_2}\nonumber
\end{eqnarray}
where $S(A)=\{1\leq s\leq\hat\ell:q_{r_s}\in A \}$ and
\begin{eqnarray*}
&T_{A,s_1,s_2}(t_3,t_2,t_1)=\min(c_{i_A,r_{s_2}}t_3,c_{i_A,r_{s_1}}t_1)-
\min(c_{i_A,r_{s_2}}t_2,c_{i_A,r_{s_1}}t_1).
\end{eqnarray*}
Next, suppose that $ t_3\leq Ct_1$ where
$C>1$ is defined by (\ref{2.22}).
Recall that $c_{i_A,r_s}$ is nondecreasing in $s$. Therefore,
$T_{A,s_1,s_2}(t_3,t_2,t_1)=0$ if  $s_2\geq s_1$ (since
$t_1\leq t_2\leq t_3$). On the other hand, if $s_1>s_2$ then
by the definition (\ref{2.22}) of $C$, using the inequality
$t_2\leq t_3\leq Ct_1$, we have
$T_{A,s_1,s_2}(t_3,t_2,t_1)=c_{i_A,r_{s_2}}t_3-c_{i_A,r_{s_2}}t_2$.
Hence by (\ref{7.1}),
\[
E[(\eta(t_3)-\eta(t_2))\eta(t_1)]=(t_3-t_2)
\sum_{A\in\mathcal A}\,\,\,
\sum_{s_1,s_2\in S(A):\,s_1>s_2} c_{i_A,r_{s_2}}D_{s_1,s_2}:=(t_3-t_2)\hat\Delta.
\]
By (\ref{2.20}),
$
D^2=\sum_{A\in\cA}\sum_{s: q_{r_s}\in A}
c_{i_A,r_s}D_{s,s}+2\hat\Delta
$
and therefore
$
\hat\Delta=\frac12\Delta
$, 
and (\ref{2.23}) follows.

Completing the proof of Theorem \ref{thm2.7}\emph{(i)}, let $K>0$, consider the
interval $I=[K,KC]\subset(0,\infty)$ and  let $t_0\leq t_1\leq t_2\leq t_3$
 in $I$. Then,  $t_3\leq KC\leq Ct_0\leq Ct_1$ and therefore by (\ref{2.23}),
\begin{eqnarray*}
&E\big[(\eta(t_3)-\eta(t_2))(\eta(t_1)-\eta(t_0))\big]
=\frac12(t_3-t_2)\Delta-\frac12(t_3-t_2)\Delta=0.
\end{eqnarray*}
Since $\eta$ is a Gaussian process this means that $\eta(t_3)-\eta(t_2)$ and
$\eta(t_1)-\eta(t_0)$ are independent and the first assertion from
Theorem \ref{thm2.7}$(i)$ follows. Proving the 
second assertion,  if $\Delta\not=0$ then  by (\ref{2.23}) the differences
$\eta(t_3)-\eta(t_2)$ and $\eta(t_1)-\eta(0)$ are not independent as
long as $0< t_1\leq t_2\leq t_3\leq Ct_1$. By continuity of the
covariances as functions of $t$, if $0<t_0$ is sufficiently small then also
$\eta(t_3)-\eta(t_2)$ and $\eta(t_1)-\eta(t_0)$ are not independent.
Proving Theorem \ref{thm2.7}\emph{(ii)}, we observe that the assumption there
implies that  $c_{i_A,r_s}$ is constant in $s$ on each set $\{ s:\, q_{r_s}\in A\}$, 
$A\in\mathcal A$. Thus, by the second equality from  (\ref{7.1}), for any 
$0\leq t_1\leq t_2\leq t_3$,
\begin{eqnarray*}
E[(\eta(t_3)-\eta(t_2))\eta(t_1)]=0
\end{eqnarray*}
which makes the increments independent.
In order to see that they are stationary
it is sufficient to show that for any $A\in\mathcal A$
the process $\eta^{(A)}$ defined in (\ref{7.1-})
has stationary increments, which holds true since the multidimensional process
$\{\eta_s,\,s=1,...,\hat\ell\}$ has stationary increments.\qed

Now we prove Corollary \ref{cor2.8}. Suppose that $\hat\ell=\ell=2$. 
If  $D_{1,2}=0$ then $\eta$ is a sum of two independent processes with independent
increments, and so it is  a process with independent increments. When 
$q_1\not\equiv q_2$ then $D_{1,2}=0$ by Proposition \ref{prop5.3}, 
and hence $\eta$ has independent increments.
On the other hand, suppose that $q_1\equiv q_2$.  If $c_{1,2}=1$ then Theorem 
\ref{thm2.7}\emph{(ii)} shows that $\eta$ has independent increments. 
When $c_{1,2}>1$ then Theorem \ref{thm2.7}\emph{(i)} shows that 
$\eta$ does not have independent increments if $D_{1,2}\not=0$, since in 
this situation $\Del$ and $D_{1,2}$ are proportional.

Finally, we give examples that $D_{1,2}$ may or may not vanish in the case 
when $q_1\equiv q_2$ and $\deg q_1>1$, 
no matter whether $c_{1,2}=1$ or $c_{1,2}>1$. Let $x_{1,2}$ satisfying  
(\ref{5.0}) and write  $c_{1,2}=\al/\be$, where $\gcd(\al,\be)=1$. 
Then by Remark \ref{rem4.2} and  Proposition \ref{prop5.3},
$D_{1,2}$ is proportional to
$
\int F_1(x)F_2(y,z)d\mu(y)d\mu_k(x,z)
$
where $k=q_2(0)-q_1(x_{1,2})$, assuming that $x_{1,2}=z-c_{2,1}t$ for some $z\in\bbZ$ and 
$t\in\{0,1,...,\al-1\}$. 
Suppose that $k=0$ and  consider functions of the form $F(x,y)=f_1(x)f_2(y)+g(x)$
where $\int g(x)d\mu(x)=\int f_2(y)d\mu(y)=0$.
Then $F_1=g$ and so $D_{1,2}$ is proportional to 
$
 \int f_1(y)d\mu(y)\times\int g(x)f_2(x)d\mu(x).
$
When $g=f_2$, $\int f_1(x)\mu(x)\not=0$ and 
$\int f^2_2(y)d\mu(y)>0$ then 
$D_{1,2}\not=0$, while $D_{1,2}=0$
if $\int f_1(y)d\mu(y)=0$. \qed

\subsection{Characterization of positivity for nonlinear classes via solutions for functional equations }
Set $\tilde A=\{q_{r_i}: \deg q_{r_i}>1\}$. 
Let $I\subset\tilde A$ and let $j_I$ be the maximal index $j$ such that $q_{r_j}\in I$.
Notice that $G_I(b^{(I)})=0$ if and only if the function 
$F_I(y_1,...,y_{j_I})=\sum_{i: q_{r_i}\in I}F_i(y_1,...,y_i)$ satisfies
\begin{eqnarray}\label{FCr}
&F_I(b_{j_I})=
\sum_{i<j_I: q_{r_i}\in I}F_i(p_i(b_{j_I}))-F_i(b_i)
\end{eqnarray}
where $p_i(z_1,...,z_{j_I})=(z_1,...,z_i)$. Let the measure $\ka_I$ be the marginal of 
$\tilde\ka=\prod_{A\in\cA: d_A>1}\ka_A$ corresponding to the variable $b^{(I)}$. 
 Consider the equation
\begin{eqnarray}\label{Eq-thm2.6-}
&F_I(b_{j_I})=
\sum_{i<j_I: q_{r_i}\in I}g_i(p_i(b_{j_I}))-g_i(b_i),\,\,
\ka_I-\text{almost surely}
\end{eqnarray}
where  $g_i$'s are functions satisfying (\ref{2.17}). When $I=\{q_{r_{j_I}}\}$
then (\ref{Eq-thm2.6-}) becomes $F_{j_I}=0$, and existence of such a solution means that 
$F_{j_I}$ vanishes $\nu_1\times\cdots\times\nu_{j_I}$-almost surely.

\begin{theorem}\label{thm-FunEq}
(i) The set of functions $\{F_i: i<j_I, q_{r_i}\in I\}$ is the only possible solution for 
the equation (\ref{Eq-thm2.6-}). 

(ii) Let  $\tilde D^2$ be as  in Corollary \ref{cor2.3-}. Then 
$\tilde D^2=0$ if and only if there exists a solution for the equation 
(\ref{Eq-thm2.6-}) with $I=\tilde A$.
 In particular when $q_1$ is nonlinear then $D^2=0$ if and only if there exists a solution 
$g=\{g_i: i<\hat\ell\}$ for the equation 
\begin{eqnarray*}
&F(b_{\hat\ell})=\sum_{i<\hat\ell}g_i(p_i(b_{\hat\ell}))-g_i(b_i),\,\,
\tilde\ka-\text{almost surely}
\end{eqnarray*}
with $g_i$'s satisfying (\ref{2.17}).
\end{theorem}
We note that  $\tilde D^2=0$
 is  equivalent to the statement that there exists a solution for (\ref{Eq-thm2.6-}) with either any 
$I=A\subset\tilde A$ or  any $I=A_l\subset\tilde A$,  as well.

\begin{proof}
 Let $I\subset\tilde A$, 
set $j=j_I$ and let $\{g_i: i<j, q_{r_i}\in I\}$  be a solution for  (\ref{Eq-thm2.6-}). 
 Let  $A,l$ be such that $q_{r_j}\in A_l$. 
The equivalence class $B_{j,j}$ containing the variable $b_{j,j}$ satisfies
$B_{j,j}=\{b_{i,i}: q_{r_i}\in A_l\}$ and 
the marginal of  $\tilde\ka$ corresponding to the variable $b_j$ is $\nu^{(j)}=
\nu_1\times\cdots\times\nu_j$.
 Thus, integration of both sides  of (\ref{Eq-thm2.6-}) with respect to  $\ka_{A'_{l'}}$ for any
 $A'_{l'}\not=A_l$ and then with respect to  $\ka^{(B_{j,j})}$ yields,
\begin{eqnarray}\label{UP}
&\sum_{i<j: q_i\in I}F_i(p_i(b_j))=
\sum_{i<j: q_i\in I}g_i(p_i(b_j)),\,\nu^{(j)}-
\text{almost surely}
\end{eqnarray}
where we used that $g_i$'s satisfy (\ref{2.17}). 
Completing the proof of Theorem \ref{thm-FunEq}\emph{(i)}, 
write  $I=\{q_{r_{k_1}},...,q_{r_{k_d}},q_{r_j}\}$ where $k_1<k_2<...<k_d<j$. 
 Integrating (\ref{UP})
with respect to $(b_{t,j})_{k_1<t\leq j}$, taking into account that $g_i$'s 
 satisfy (\ref{2.17}), 
 yields $g_{k_1}=F_{k_1}$, $\nu^{(j)}$-almost surely. Subtracting 
$F_{k_1}(p_{k_1}(b_j))$ from both sides of (\ref{UP}) and then 
 repeating this  argument with $k=k_2,k_3,...,k_d$ in place of $k_1$
 shows that  $g_i=F_i$,  $\nu^{(j)}$-almost surely, for any $i<j$ such that $q_{r_i}\in I$. 
Theorem \ref{thm-FunEq}\emph{(ii)} is a direct consequence of 
Theorem \ref{thm-FunEq}\emph{(i)} and Corollary (\ref{cor2.3-}). 
\end{proof}

\subsection{\textbf{The stationary case}}

Consider the situation when $X=\{X(n): n\geq 0\}$ is stationary.
Suppose that  $q_1$ is linear and let $k$ be such that $i_1=r_k$. 
Let $s^2$ be as in Theorem \ref{thm2.4}. Then 
 $D_{\cL_1}^2=\lim_{N\to\infty}E\big(\xi_N^{(1)}(1)\big)^2=0$ if and only if
$s^2=0$. 
The process $Y=\{(Y^{(i)}(n))_{i=1}^k:\,n\geq 0\}$ is stationary, as well. Let 
$(\cX,\la,V)$  be a measure preserving system (MPS)
and $\varphi$ be a vector valued function
 such that  $Y(n)=\varphi \circ V^n$ for any $n\geq 0$. 
Inequality  (4.2) from \cite{HK} is established in our situation with 
the function  $G=F_1+...+F_k$ in the same way as in \cite{HK}. Thus, 
by  Proposition 8.3 and Theorem 8.6 from \cite{Br}  (modified  for a one sided process)
$s^2=0$ if and only if 
the expectations $E[\Sigma_N^2]$ are bounded in $N$, which by Theorem 
18.2.2 from \cite{IL} (see also \cite{Bro}) is equivalent to existence of a square integrable function 
$g$ such that
\begin{eqnarray}\label{Cob1}
&G\circ\varphi=g-g\circ V,\,\,\,\la-\text{almost surely}.
\end{eqnarray}

 Similar equivalent condition for positivity  of  $D^2$  exists in the case that 
$q_\ell$ is nonlinear, as well. Indeed, the processes $Y^{(B)}=\{Y^{(B)}(n): n\geq 0\},\,B\in\cB$
defined above (\ref{Ys}) are stationary,
and let $M(B)=(\Om_B,\cM_B,U_B)$ be an MPS and $\phi_B$ be a vector valued function 
such that $Y^{(B)}(n)=\phi_B\circ U_B^n$ for any $n\geq0$. 
Set $\Om_{\cB}=\prod_{B\in\cB}\Om_B$ and $\cM_{\cB}=\prod_{B\in\cB}\cM_B$. 
For any  $1\leq i\leq \hat\ell$ and $1\leq s\leq i$
let $B_{s,i}\in\cB$ be such that $b_{s,i}\in B_{s,i}$. 
Let  the map $p_{b_i}$ be defined by 
$p_{b_i}(\om_{\cB})=(U_{B_{s,i}}(\om_{B_{s,i}}))_{s=1}^i$, where 
$\om_{\cB}=(\om_B)_{B\in\cB}\in\Om_{\cB}$. 
Set $\phi_{\hat\ell}=
\phi_{B_{1,\hat\ell}}\times\cdots\times\phi_{B_{\hat\ell,\hat\ell}}$.
The singletons 
$B_{i,\hat\ell}=\{b_{i,\hat\ell}\}$, $i=1,...,k$ are classes of $\cB$ 
since  $q_{r_i}$ is linear for such $i$'s.  Thus, 
the processes $Y^{(i)}$ and $Y^{(B_{i,\hat\ell})}$ have the same distribution.
Therefore $\prod_{s=1}^kM(B_{s,\hat\ell})$ (the product MPS) together with the 
function $\varphi=\phi_{B_{1,\hat\ell}}\times\cdots\times \phi_{B_{k,\hat\ell}}$
generate a process $\tilde Y$ which has the same distribution as the process $Y$
defined above. 

Suppose that  Assumption \ref{ass2.1} and (\ref{Mix2}) hold true. 
Combining the conditions  for positivity of $D_{\cL_1}^2$  and 
Theorem \ref{thm-FunEq} shows that  $D^2=0$ if and only if there exists a solution 
$(g, \{g_i: i<\hat\ell,\,\deg q_{r_i}>1\})$ for the equation   
\begin{equation}\label{Rep*}
F\circ\phi_{\hat\ell}\circ p_{b_{\hat\ell}}=
(g-g\circ V)\circ p_k\circ p_{b_{\hat\ell}} +
\sum _{k< i<\hat\ell}
g_i\circ p_{b_i}-g_i\circ p_i\circ p_{b_{\hat\ell}},\,\,\,\cM_{\cB}-a.s.
\end{equation}
where  a.s stands for almost surly.
Here  $V=U_{B_{1,\hat\ell}}\times\cdots\times U_{B_{k,\hat\ell}}$, 
 $p_i(z_1,...,z_j)=(z_1,...,z_i)$ for any $i\leq j$, $g$ is a square integrable 
function and $g_i$'s are functions satisfying
$
\int g_i((\om_{B_{s,i}})_{s=1}^i)d\cM_{B_{i,i}}(\om_{B_{i,i}})=0 
\text{ for any } \om_{B_{1,i}},...,\om_{B_{i-1,i}}.
$
When $q_1$ is nonlinear then  the term $g-g\circ V$ does not appear, 
 we set $k=0$  and we only require 
 Assumption \ref{ass2.1} to be satisfied. 
We can always assume that $U_B$'s are invertible and then to  replace 
$U_{B_{s,\hat\ell}}(\om_{B_{s,\hat\ell}})$ with $\om_{B_{s,\hat\ell}}$. 
In this case the left hand side becomes $F\circ\phi_{\hat\ell}$
and the first term on the right hand side becomes $g-g\circ V$.

Let $(\Om,\cM,U)$ be an MPS and  $\phi$ be a vector valued function such that $X(n)=\phi\circ U^n$
for any $n\geq0$. 
We can always take the natural MPS of $Y^{(B)}$. This means that
$\Om_B=\prod_{b_{i,s}\in B}\Om^{r_s-r_{s-1}}$,
$\cM_B=\bbI_{\textbf{D}_B}\cM$ and 
$U_B=\prod_{b_{i,s}\in B}\prod_{1\leq j\leq r_s-r_{s-1}}U$. 
Here \textbf {D}$_B$  is the diagonal sets of $\Om_B$ and $\bbI_{\textbf{D}_B}$
is its  indicator function.  Then
$Y^{(B)}(n)=\phi_B\circ U_B^n$, where
$
\phi_B=
\phi\circ\prod_{b_{i,s}\in B}\prod_{1\leq j\leq r_s-r_{s-1}} U^{R(B)+j_{b{i,s}}+d_{j,s}}
$
and, in the notations appearing above (\ref{Ys}),
 $j_b=z_{s_k(B),s_1(B)}$ if $b=b_{s_k(B),i_k(B)}$.
In this case the equation (\ref{Rep*}) includes functions and powers of $U$, which makes it 
explicit in  terms of $U$ and $\phi$.

\section{{\large Appendix: General weak limit theorems}}
\label{sec8}\setcounter{equation}{0}

For each $N\in\bbN$ let $\cG_{N,n},\,n=1,2,...$ be a filtration of $\sig$-algebras
and let $\{U_{N,n}:n\geq1\}$ be a triangular array of random variables
satisfying the following conditions:
\begin{description}
\item[B1]
For any $N$, $\{U_{N,n}:n\geq1\}$ is adapted to some
 $\big(\Om_N,\cG_{N,n},P_N\big),\,\, n=1,2,3...$;
\item[B2]
$\{U_{N,n}\}$ are uniformly square integrable;
\item[B3]
$\|E[U_{N,m}|\cG_{N,n}\|_2\leq c(m-n)$ for all $N$, $n\leq m$ and some sequence
$c(k)$ satisfying $\sum_{k=0}^\infty c(k)=C<\infty$;
\item[B4]
For some increasing function $A(t)$,
\[
\lim_{N\to\infty}\|\frac1N\sum_{1\leq n\leq Nt}W^2_{N,n}-A(t)\|_1=0
\]
where
\begin{equation}\label{W-def}
W_{N,n}=U_{N,n}+\sum_{m\geq n+1}E(U_{N,m}|\cG_{N,n})-
\sum_{m\geq n}E(U_{N,m}|\cG_{N,n-1}).
\end{equation}
\end{description}
Observe that for any fixed $N$ the process $\{W_{N,n},n\geq1\}$ is
a martingale difference sequence with respect to $\{\cG_{N,n}, n\geq 1\}$,
 provided that conditions B1-B3 hold true and that
condition B4 is a usual quadratic variation type condition.
The following theorem is a standard result cited in \cite{KV} as Theorem 5.1
(see, for instance, \cite{JS}).
\begin{theorem}\label{Thm5.1} Suppose that conditions \emph{B1-B4} are 
satisfied. Then, for any $T>0$ the processes
\[
\zeta_N(t)=\sum_{1\leq n\leq Nt}U_{N,n}
\]
converge in distribution on the Skorokhod space $D\big([0,T], \bbR\big)$
 to a Gaussian process
$\zeta(t)$ with independent increments such that $\zeta(t)-\zeta(s)$ has
mean $0$ and variance $A(t)-A(s)$.
\end{theorem}
In Lemma 5.2 and Remark 5.3 from \cite{KV} it is explained that
condition $B4$ can be replaced by the weaker condition
\begin{eqnarray}\label{B4'}
\lim_{N\to\infty}\frac1N\sum_{1\leq n\leq Nt}EW^2_{N,n}=A(t)
\end{eqnarray}
if condition B2 is satisfied and one can write
 $U_{N,n}=H_n\big(X_r(q_1(n)),X_r(q_2(n)),...,X_r(q_{j-1}(n)),\om\big)$,
 where $X_r(n)=E(X(n)|\cF_{n-r,n+r})$.
Here $r$ is a constant independent of $n$ and $N$ and $H_n(x_1,...,x_{j-1},\om)$ is
$\cF_{q_j(n)-r,q_j(n)+r}-$measurable  such that
 $||H(x,\cdot)||_2\leq K(1+||x||^\iota)$ for any $x=(x_1,...,x_{j-1})$.
 This remains true also in our polynomial setup since, after the reduction to the 
case $\hat\ell=\ell$, the differences
$q_i(n)-q_{i-1}(n)$
 and $q_i(n+1)-q_i(n)$ grow at least as fast as linear which makes  Lemma 5.2
from \cite{KV} applicable.

The following proposition is proved in  \cite{KV} (see Corollary 5.7 from there).
\begin{proposition}\label{Cor5.7} 
Assume that we have a triangular array consisting of $\cG_{N,n}$-measurable
random vectors $U_{N,n}=(U_{N,n}^{(i)})_{i=1}^d:\Om\to\bbR^d$ and that any linear
combination $\langle\la,U_{N,n}\rangle$ satisfies conditions \emph{B1-B4}. 
In particular,
\[
\lim_{N\to\infty}\big\|\frac1N\sum_{1\leq n\leq Nt}
\langle\la,W_{N,n}\rangle^2-\langle\la,A(t)\la\rangle\big\|_1=0
\]
where $W_{N,n}=(W_{N,n}^{(i)})_{i=1}^d$ and  $W_{N,n}^{(i)}$
 is defined by (\ref{W-def}) with  the process $\{U_{N,n}^{(i)}, n\geq1\}$.  
Let $\{k_N\}$ satisfying  $\lim_{N\to\infty}\frac{k_N}N=t_0$. Then
for any $T>0$,
\[
\zeta_{N,k_N}(t)=\frac1{\sqrt N}\sum_{k_N+1\leq n\leq k_N+Nt}U_{N,n}
\]
converges in distribution on the Skorokhod space $D\big([0,T];\bbR^d\big)$
to a Gaussian process $\eta(t)$ with independent increments taking values in
$\bbR^d$, having mean $0$ and covariances
\[
E\langle\la,\eta(t)-\eta(s)\rangle^2=\langle\la,\big(A(t)-A(s)\big)\la\rangle.
 \]
\end{proposition}
Next, 
\begin{theorem}\label{Thm5.6}
 Let $\{U_{N,n},\,n\geq1\}$, $\{k_N\}$ and $\zeta_{N,k_N}(t)$
be as in Proposition \ref{Cor5.7}.
 Let $\mathcal X$ be a complete metric space and for each $N\geq1$
 let $F_N(\om)$ be a $\mathcal X$ valued and $\cG_{N,k_N}-$ measurable
 random variable. Suppose that the distribution $\gam_N$ of $F_N$ under
 $P_N$ converges weakly as $N\to\infty$ to $\gam$ on $\mathcal X$.
 Then for any $T>0$ the joint distribution of the pair $\big(F_N, \zeta_{N,k_N}(\cdot)\big)$
 converges on $\mathcal X\times D\big([0,T];\bbR^d\big)$
 to the product of $\gam$ and the
 distribution of a Gaussian process with independent increments having mean
 $0$ and a covariance matrix $A(t+t_0)-A(t_0)$. 
We can drop the assumption that $\frac{k_N}N\to t_0$ provided that 
\[
\lim_{N\to\infty}\big\|\frac1N\sum_{k_N+1\leq n\leq k_N+Nt}
\langle\la,W_{N,n}\rangle^2-\langle\la,A(t)\la\rangle\big\|_1=0
\]
for any $t\geq 0$ and $\la\in\bbR^d$. 
\end{theorem}
This result was proved in Theorem 5.6 from \cite{KV} for one-dimensional
processes $\{U_{N,n},\,n\geq 1\}$ and the proof of the multidimensional 
version above proceeds in a similar way, relying  on Proposition \ref{Cor5.7}
in place of  Theorem \ref{Thm5.1}

\begin{remark}\label{rem6.1}
In  Theorem \ref{Thm5.1}, Proposition \ref{Cor5.7} and Theorem
\ref{Thm5.6} it is possible to replace $N$ by any monotone
increasing subsequence $\{N_j,j\geq1\}\subset\bbN$, i.e. to assume
that all the conditions are valid along this subsequence considering only 
$\zeta_{N_j}$ and $\zeta_{k_{N_j},N_j}$ and taking all the limits as $j\to\infty$.
\end{remark}

\end{document}